\newtheorem{teo}{Theorem}[section]
\newtheorem{oss}[teo]{Remark}
\newtheorem{defn}[teo]{Definition}
\newtheorem{lem}[teo]{Lemma}
\newtheorem{pro}[teo]{Proposition}
\newcommand{\D}{\mathbb{D}}
\newcommand{\B}{\mathbb{B}}
\newcommand{\HH}{\mathbb{H}}
\newcommand{\N}{\mathbb{N}}
\newcommand{\rr}{\mathbb{R}}
\newcommand{\s}{\mathbb{S}}
\newcommand{\de}{\partial_c}
\newcommand{\p}{\partial}
\DeclareMathOperator{\Diam}{diam}
\DeclareMathOperator{\IIm}{Im}
\DeclareMathOperator{\RRe}{Re}
\DeclareMathOperator{\prodstar}{\prod \hskip -2.30ex * \hskip 1.3 ex}
\newcommand{\stella}{\prodstar\displaylimits}
\title{\bf Landau-Toeplitz theorems for slice regular functions over quaternions}
\author{Graziano Gentili\footnote{Partially supported by GNSAGA of the INdAM and by PRIN ``Propriet\`a geometriche delle variet\`a reali e complesse'' of the MIUR.}  \\ 
\normalsize Dipartimento di Matematica ``U. Dini'', Universit\`a di Firenze \\ 
\normalsize Viale Morgagni 67/A, 50134 Firenze, Italy,  gentili@math.unifi.it \\
\and Giulia Sarfatti$^*$\footnote{Partially supported by PRIN ``Geometria Differenziale e Analisi Globale'' of the MIUR.} \\ 
\normalsize Dipartimento di Matematica ``U. Dini'', Universit\`a di Firenze \\ 
\normalsize Viale Morgagni 67/A, 50134 Firenze, Italy,  sarfatti@math.unifi.it}
\date{}
\begin{document}

\maketitle

\begin{abstract} The theory of slice regular functions of a quaternionic variable, as presented in \cite{libroGSS},
extends the notion of holomorphic function to the quaternionic setting. This theory, already rich of results, is sometimes surprisingly 
different from the theory of holomorphic functions of a complex variable. However, several fundamental results in the two environments are similar, 
even if their proofs for the case of quaternions need new technical tools. 

In this paper we prove the Landau-Toeplitz Theorem for slice regular functions, in a formulation that involves an 
appropriate notion of \emph{regular $2$-diameter}. We then show that the Landau-Toeplitz inequalities hold in the case of the 
regular $n$-diameter, for all $n\geq 2$. Finally, a $3$-diameter version of the Landau-Toeplitz Theorem is proved using the notion of {\em slice $3$-diameter}. 
\end{abstract}

{\bf Mathematics Subject Classification (2010): } 30G35, 30C80

{\bf Keywords:} Functions of hypercomplex variables, Geometric theory of regular functions of a quaternionic variable, Schwarz Lemma and generalizations.

\section{Introduction}

The Schwarz Lemma, in its different flavors, is the basis of a chapter of fundamental importance in the geometric theory of 
holomorphic functions of one and several complex variables. Its classic formulation in one variable is the following:

\begin{teo}[Schwarz Lemma]\label{schwarz} Let $\D=\{z\in \mathbb{C} : |z|<1\}$ be the open unit disc of $\mathbb{C}$ centered at the origin, 
and let $f:\D\rightarrow \D$ be a holomorphic function such that $f(0)=0$. Then
\begin{equation}\label{i}
|f(z)| \leq |z|
\end{equation}
for all $z \in \D$, and
\begin{equation}\label{ii}
|f'(0)|\leq 1.
\end{equation}
Equality holds in \eqref{i} for some $z \in \D \setminus \{0\}$, or in \eqref{ii}, if and only if there exists $u \in \mathbb{C}$, 
with $|u|=1$, such that $f(z)=uz$ for all $z \in \D$.
\end{teo}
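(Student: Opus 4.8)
The plan is to reduce the whole statement to the Maximum Modulus Principle applied to an auxiliary function obtained by dividing $f$ by $z$. First I would use the hypothesis $f(0)=0$ to write the Taylor expansion of $f$ at the origin in the form $f(z)=\sum_{k\ge 1}a_k z^k$, so that
\[
g(z)=\begin{cases} f(z)/z & z\in\D\setminus\{0\},\\ f'(0) & z=0,\end{cases}
\]
is holomorphic on all of $\D$: the singularity at the origin is removable, and $g(0)=a_1=f'(0)$.

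Next I would estimate $g$ on circles. Fix $r\in(0,1)$; for $|z|=r$ one has $|g(z)|=|f(z)|/r\le 1/r$ since $f$ maps $\D$ into $\D$. Applying the Maximum Modulus Principle to $g$ on the closed disc $\{|z|\le r\}$ propagates this bound to all of that disc, and letting $r\to 1^-$ yields $|g(z)|\le 1$ for every $z\in\D$. Multiplying by $|z|$ gives \eqref{i}, and evaluating at the origin gives \eqref{ii}.

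For the equality discussion, suppose equality holds in \eqref{i} at some $z_0\in\D\setminus\{0\}$, or in \eqref{ii}. In either case $|g|$ attains the value $1$, which by the previous step is its maximum over $\D$, at an interior point; the Maximum Modulus Principle then forces $g\equiv u$ for a constant $u$ with $|u|=1$, i.e. $f(z)=uz$ on $\D$. The converse is immediate, since $f(z)=uz$ with $|u|=1$ gives $|f(z)|=|z|$ everywhere and $|f'(0)|=|u|=1$.

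The only point requiring care is the bookkeeping around the auxiliary function — checking that $g$ genuinely extends holomorphically across $0$ and that the estimate survives the passage to the limit $r\to1^-$; once this is in place the argument is purely formal. As this is the classical complex statement, recalled here only for motivation, there is no substantial obstacle beyond invoking the Maximum Modulus Principle correctly.
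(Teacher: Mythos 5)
Your proof is correct: it is the standard argument via the auxiliary function $g(z)=f(z)/z$ (removable singularity at the origin, Maximum Modulus Principle on $\{|z|\le r\}$, passage to the limit $r\to 1^-$, and constancy of $g$ in the equality case). The paper states this classical theorem only as background motivation and gives no proof of its own, so there is nothing to compare against; your argument is the canonical one and is complete.
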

\noindent The Schwarz Lemma, and its extension due to Pick, lead in a natural way to the construction of the Poincar\'e metric, 
which plays a key role in the study of the hyperbolic geometry of complex domains and manifolds. In 1907, the same year of the first formulation of 
the Schwarz Lemma, the Landau-Toeplitz Theorem was proven, \cite{L-T}. This less known, but quite interesting result, 
concerns the study of the possible ``shapes'' of the image of the unit disc under a holomorphic function and it is formulated in terms 
of the diameter of the image set. 

\begin{teo}[Landau-Toeplitz]
Let $f$ be holomorphic in  $\D$ and such that the diameter $\Diam f(\D)$ of $f(\D)$ equals  $2$. Then
\begin{equation}\label{LTI1}
\Diam f(r\D) \leq 2r 
\end{equation}
for all $r \in (0,1)$,
and
\begin{equation}\label{LTI2}
|f'(0)|\leq 1.
\end{equation}
Moreover equality holds in \eqref{LTI1} for some $r \in (0,1)$, or in \eqref{LTI2}, if and only if $f$ is of the form $f(z)=a +bz$ 
with $a,b \in \mathbb{C}$ and $|b|=1$.
\end{teo}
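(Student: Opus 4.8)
The plan is to derive both inequalities from the classical Schwarz Lemma (Theorem~\ref{schwarz}) applied to suitable two--point auxiliary functions, and to settle the equality cases by perturbing a diametral pair.

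The preliminary observation is that $\Diam f(\D)=2$ forces $|f(z_1)-f(z_2)|\le 2\max\{|z_1|,|z_2|\}$ for all $z_1,z_2\in\D$. Indeed, fixing $z_1,z_2$ and setting $\rho=\max\{|z_1|,|z_2|\}$ (the case $\rho=0$ being trivial), the function
$$g(\zeta)=\frac12\Bigl(f\bigl(\tfrac{z_1}{\rho}\,\zeta\bigr)-f\bigl(\tfrac{z_2}{\rho}\,\zeta\bigr)\Bigr),\qquad \zeta\in\D,$$
is holomorphic, $g(0)=0$, and $|g|\le 1$ on $\D$ because $\tfrac{z_1}{\rho}\zeta,\tfrac{z_2}{\rho}\zeta\in\D$ and $\Diam f(\D)=2$; hence either $g\equiv 0$ or, by the maximum principle, $g$ maps $\D$ into $\D$, and in either case the Schwarz Lemma yields $|g(\zeta)|\le|\zeta|$. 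Evaluating at $\zeta=\rho$ gives the estimate. Inequality \eqref{LTI1} follows at once, since $\Diam f(r\D)=\sup_{z_1,z_2\in r\D}|f(z_1)-f(z_2)|\le 2r$, and \eqref{LTI2} follows from $|f(z_1)-f(-z_1)|\le 2|z_1|$ by letting $z_1\to 0$.

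For the rigidity statement, I would first show that equality in \eqref{LTI1} for some $r\in(0,1)$ implies equality in \eqref{LTI2}. If $\Diam f(r\D)=2r$, continuity and compactness give $z_1^\ast,z_2^\ast$ with $|z_j^\ast|\le r$ and $|f(z_1^\ast)-f(z_2^\ast)|=2r$, and the estimate above then forces $\max\{|z_1^\ast|,|z_2^\ast|\}=r$, say $|z_1^\ast|=r$. The function $\chi(\zeta)=\tfrac12\bigl(f(\tfrac{z_1^\ast}{r}\zeta)-f(\tfrac{z_2^\ast}{r}\zeta)\bigr)$ maps $\D$ into $\D$, fixes $0$, and satisfies $|\chi(r)|=r$, so the equality case of the Schwarz Lemma gives $\chi(\zeta)=c\zeta$ with $|c|=1$; comparing derivatives at $0$ yields $|f'(0)|\,|z_1^\ast-z_2^\ast|=2r$, and since $|z_1^\ast-z_2^\ast|\le 2r$ this forces $|f'(0)|\ge 1$, hence $|f'(0)|=1$ by \eqref{LTI2}. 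It therefore suffices to treat equality in \eqref{LTI2}. If $|f'(0)|=1$, then $h(\zeta)=\tfrac12(f(\zeta)-f(-\zeta))$ maps $\D$ into $\D$, fixes $0$, and has $|h'(0)|=|f'(0)|=1$, so the equality case of the Schwarz Lemma gives $h(\zeta)=f'(0)\zeta$, i.e. the odd part of $f$ is $f'(0)\zeta$. Replacing $f$ by $\overline{f'(0)}\bigl(f-f(0)\bigr)$, which leaves $\Diam f(\D)=2$ unchanged, we may assume $f(z)=z+e(z)$ with $e$ holomorphic and even, $e(0)=0$, and it remains to prove $e\equiv 0$.

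This last step is the crux. Fix $w=re^{i\theta}\in\D$ and, for small real $\tau$, put $z_1=w$, $z_2=-we^{i\tau}$; both lie on $|z|=r$, so $|f(z_1)-f(z_2)|\le 2r$. Using $e(-x)=e(x)$ and expanding to first order in $\tau$,
$$f(z_1)-f(z_2)=w\bigl(1+e^{i\tau}\bigr)+e(w)-e\bigl(we^{i\tau}\bigr)=w\bigl[\,2+i\tau\bigl(1-e'(w)\bigr)\,\bigr]+O(\tau^2),$$
whence $|f(z_1)-f(z_2)|^2=4r^2-4r^2\tau\,\IIm\!\bigl(1-e'(w)\bigr)+O(\tau^2)\le 4r^2$; letting $\tau\to 0^+$ and $\tau\to 0^-$ forces $\IIm e'(w)=0$. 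As $w$ ranges over $\D$, this says $e'$ is a real-valued holomorphic function, hence constant; since $e$ is even with $e(0)=0$ we have $e'(0)=0$, so $e'\equiv 0$ and $e\equiv 0$, i.e. $f(z)=z$. Undoing the normalization gives $f(z)=f(0)+f'(0)z=a+bz$ with $|b|=1$; the converse is immediate, since $\Diam(a+b\,r\D)=2r|b|=2r$ for every $r$. I expect this final step to be the main obstacle: it is the only point where the hypothesis that $\Diam f(\D)$ equals $2$ (and not merely $\le 2$) is used beyond the scope of the one--variable Schwarz Lemma, the mechanism being that an infinitesimal rotation of a diametral pair $\{w,-w\}$ breaks the diameter bound unless the derivative of the even part of $f$ stays real on every circle centred at the origin, which by holomorphy forces that even part to vanish.
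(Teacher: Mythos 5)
Your proof is correct, and it is essentially the route the paper itself follows for the quaternionic analogue (Theorem \ref{f0q}): the two-point Schwarz estimate plays the role of the monotonicity argument with the auxiliary functions $g_{u,v}(q)=\frac12 q^{-1}(f_u(q)-f_v(q))$, the odd-part-plus-Schwarz step for $|f'(0)|\le 1$ is identical, and your reduction of equality in \eqref{LTI1} to equality in \eqref{LTI2} matches the paper's. The only difference is in packaging the rigidity step: the paper fixes one endpoint and applies a boundary-fixed-point lemma (Proposition \ref{Imq2}, a first variation of $|h_w(we^{I\theta})|^2$ at $\theta=0$) to the function $h_w(q)=\frac12(f(q)-f(-w))\partial_c f(0)^{-1}$, concluding $\IIm(f'(w)/f'(0))=0$, whereas you perform the same first-variation computation directly on the perturbed antipodal pair $\{w,-we^{i\tau}\}$ after normalizing $f(z)=z+e(z)$ with $e$ even, concluding $\IIm e'(w)=0$; these are the same mechanism and yield the same conclusion.
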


\noindent This result can be interpreted as a generalization of the classical Schwarz Lemma, in which the diameter of the image set 
takes over the role of the maximum modulus of the function. Indeed, there exist infinite subsets of the plane that have constant diameter and 
are different from a disc; the  Reuleaux Polygons are a well known example of such sets, \cite{gardner, L-R}.

The recent definition of slice regularity for quaternionic functions of one quaternionic variable, inspired by Cullen \cite{Cullen} 
and developed in \cite{G.S.} and in \cite{GSAdvances} by Gentili and Struppa, identifies a large class of functions, that includes natural 
quaternionic power series and polynomials. The study of a geometric theory for this class of functions has by now produced several interesting 
results, sometimes analogous to those valid for holomorphic functions; the Schwarz Lemma is among these results, \cite{GSAdvances}, together with the Bohr Theorem and the Bloch-Landau Theorem, \cite{BohrDGS},\cite{BlochDGS},\cite{sarfatti}.

Fairly new developments in the theory of holomorphic functions of one complex variable  include the analogue of the Schwarz Lemma for 
meromorphic functions, and open new fascinating perspectives for future researche. In this setting the paper \cite{Solynin} by Solynin  
recalls into the scenery the approach of Landau and Toeplitz, and its modern reinterpretation and generalization due to Burckel, Marshall, Minda, 
Poggi-Corradini and Ransford, \cite{Poggi}. 

In our paper we first prove an analogue of the Landau-Toeplitz Theorem for slice regular functions. 
To this purpose we need to introduce a new tool to ``measure'' the image of the open unit ball $\B$ of the space of quaternions $\HH$ 
through a slice regular function: the \emph{regular diameter}.
\begin{defn}\label{Id2}
Let $f$ be a slice regular function on $\B=\{q\in \mathbb{H} : |q|<1\}$ and let $f(q)=\sum_{n \geq 0}q^na_n$ be its power series expansion. 
For $r \in (0,1)$, we define the {\em regular diameter} of the image of $r\B$ under $f$ as
$$\tilde{d}_2(f(r\B))=\max_{ u,v \in \overline{\B} }\max_{|q| \leq r}|f_u(q)-f_v(q)|,$$ 
where 
$$f_u(q)=\sum_{n\geq 0}q^nu^na_n, \hskip 1cm f_v(q)=\sum_{n\geq 0}q^nv^na_n.$$
Moreover, we define the {\em regular diameter} of the image of $\B$ under $f$ as
$$\tilde{d}_2(f(\B))=\lim_{r \to 1^-}\tilde{d}_2(f(r\B)).$$ 
\end{defn}

\noindent The introduction of this new geometric quantity is necessary because of the peculiarities of the quaternionic environment, 
and in particular since 
composition of slice regular functions is not slice regular in general. The regular diameter can play the role of the diameter, in fact the former 
is finite if and only if the latter is finite. The regular diameter hence appears in the statement of the announced result.
 
\begin{teo}[Landau-Toeplitz for slice regular functions]\label{007}
Let $f$ be a slice regular function on $\B$ such that $\tilde{d}_2(f(\B))=2$ and let $\partial_c f(0)$ be its \emph{slice derivative} in $0$. Then 
\begin{equation}\label{ILTQ2}
\tilde{d}_2(f(r\B)) \leq 2r \quad \text{for all} \quad r \in (0,1)
\end{equation}
and
\begin{equation}\label{ILTQ1}
|\partial_c f(0)|\leq 1.
\end{equation}
Moreover, equality holds in \eqref{ILTQ2} for some $r \in (0,1)$, or in \eqref{ILTQ1}, 
 if and only if $f$ is an affine function, i.e. $f(q)=a +qb$, with $a,b \in \mathbb{H}$ and $|b|=1$.
\end{teo}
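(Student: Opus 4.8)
The plan is to mimic the classical proof of the Landau--Toeplitz theorem, but carried out through the auxiliary family $\{f_u\}_{u\in\overline{\B}}$ rather than through composition, which is unavailable here. The central observation should be that for fixed $u,v\in\overline{\B}$ the difference $g_{u,v}(q)=f_u(q)-f_v(q)$ is itself slice regular on $\B$, and that $|g_{u,v}(0)|=0$ since the constant term cancels; thus $q\mapsto \tfrac12 g_{u,v}(q)$ maps $\B$ into the closed ball of radius $\tilde d_2(f(\B))/2=1$ and vanishes at $0$. The quaternionic Schwarz Lemma from \cite{GSAdvances} (which is quoted as being available) then yields $|g_{u,v}(q)|\le 2|q|$ for all $q\in\B$. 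Taking the maximum over $|q|\le r$ and over $u,v\in\overline{\B}$ gives \eqref{ILTQ2} immediately. For \eqref{ILTQ1} one specializes: with $v=0$ and $u$ arbitrary, the Schwarz Lemma also gives a bound on the slice derivative at $0$ of $g_{u,0}=f_u-a_0$, namely $|\de f_u(0)|\le 2$; but $\de f_u(0)=u\,a_1=u\,\de f(0)$, and choosing $|u|=1$ gives $|\de f(0)|\le\ldots$ — here one must be a little careful about the factor, and in fact the sharp constant $1$ rather than $2$ should come from optimizing over $u$ and $v$ simultaneously (taking $u$ and $-u$, or $u=1,v=-1$, so that $f_u-f_v$ has slice derivative $2\,\de f(0)$ at $0$ while still being bounded by $2$). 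This last optimization is the small technical point that makes the constant sharp.

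For the equality discussion, suppose equality holds in \eqref{ILTQ2} for some $r_0\in(0,1)$, or in \eqref{ILTQ1}. Then for the maximizing pair $u_0,v_0$ the function $\tfrac12 g_{u_0,v_0}$ achieves equality in the quaternionic Schwarz Lemma, so by its rigidity part $g_{u_0,v_0}(q)=q\,c$ for some $c\in\HH$ with $|c|=2$; equivalently $f_{u_0}(q)-f_{v_0}(q)=q\,c$. Writing out the series, this forces $q^n(u_0^n-v_0^n)a_n=0$ for all $n\ge 2$, i.e. $(u_0^n-v_0^n)a_n=0$. The crux is then to deduce $a_n=0$ for all $n\ge 2$, which would show $f(q)=a_0+q a_1$ is affine, and then $|b|=|a_1|=1$ follows from $\tilde d_2(f(\B))=2$ together with a direct computation of $\tilde d_2$ for affine functions. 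To force $a_n=0$ I would argue that the maximizing pair cannot have $u_0^n=v_0^n$ for the relevant $n$: if some $a_n\ne 0$ with $n\ge 2$, I would show one can increase $\tilde d_2(f(r\B))$ by perturbing $u_0,v_0$ (or simply that the maximum of $|f_u(q)-f_v(q)|$ over $u,v$ is not attained at a pair with $u_0^n=v_0^n$), contradicting the assumed equality; alternatively, the equality in \eqref{ILTQ1} case is cleaner, since there $v_0=-u_0$, $u_0^n-v_0^n=(1-(-1)^n)u_0^n$ is nonzero for odd $n$ and the even terms must be killed by a separate symmetry/variation argument.

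I expect the main obstacle to be precisely this rigidity step: translating equality in the scalar Schwarz Lemma for one cleverly chosen pair $(u_0,v_0)$ into the vanishing of all higher coefficients $a_n$. Unlike the holomorphic case, where composing with disc automorphisms and exploiting that $\Diam$ is attained along a chord does the job, here the family $f_u$ does not interact with the maximum modulus in a transparent geometric way, and the relation $(u_0^n-v_0^n)a_n=0$ only kills coefficients for which the maximizing unit quaternions happen to separate the $n$-th powers. The remedy I anticipate is a variational argument: show that if $a_n\ne 0$ for some $n\ge 2$ then the function $(u,v)\mapsto\max_{|q|\le r}|f_u(q)-f_v(q)|$ can be strictly increased near any candidate maximizer, so that equality in \eqref{ILTQ2} would force $\tilde d_2(f(r\B))>2r$ at nearby radii, contradicting \eqref{ILTQ2}; combined with the maximum modulus principle applied to $g_{u,v}$ this should pin down $f$ as affine. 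The remaining computation — that an affine $f(q)=a+qb$ has $\tilde d_2(f(r\B))=2r|b|$, hence $|b|=1$ under the normalization, and that equality then holds throughout — is a routine estimate using $|f_u(q)-f_v(q)|=|q(u-v)b|$ and $\max_{u,v\in\overline{\B}}|u-v|=2$.
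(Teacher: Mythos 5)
Your derivation of the two inequalities is sound and is essentially the paper's own argument in different packaging: applying the quaternionic Schwarz Lemma to $\tfrac12(f_u-f_v)$ is equivalent to the paper's observation that $q\mapsto \tfrac12 q^{-1}(f_u(q)-f_v(q))$ is regular and subject to the Maximum Modulus Principle, and your optimized choice $u=1$, $v=-1$ reproduces exactly the paper's odd part $f_{odd}(q)=\tfrac12(f(q)-f(-q))$.

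The genuine gap is where you yourself locate it: the rigidity step. Schwarz-Lemma rigidity applied to the maximizing pair forces $|u_0-v_0|=2$, hence $v_0=-u_0$ with $|u_0|=1$, so the relation $(u_0^n-v_0^n)a_n=0$ reads $(1-(-1)^n)u_0^na_n=0$ and kills only the odd coefficients $a_3,a_5,\dots$; it says nothing about $a_2,a_4,\dots$, and no admissible maximizer does better. Your proposed remedy --- a variational argument showing that $a_n\neq 0$ for some even $n$ would allow one to strictly increase $\max_{|q|\le r}|f_u(q)-f_v(q)|$ near the maximizer --- is not carried out, and it is not clear it can be: perturbing $(u,v)$ away from $(u_0,-u_0)$ also changes the first-order term $(u-v)a_1$, and there is no evident reason the even-degree contributions must reinforce rather than cancel. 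The paper closes this gap by an entirely different mechanism, which is the real content of its Section 3: for each $w\in\B\setminus\{0\}$ it forms $h_w(q)=\tfrac12(f(q)-f(-w))\partial_c f(0)^{-1}$, which fixes $w$ and attains its maximum modulus $|w|$ on the disc $|w|\overline{\B}_{I_w}$ (this uses the already-established identity $f_{odd}(q)=q\,\partial_c f(0)$ together with $\tilde d_2(f(r\B))=2r$); Proposition \ref{Imq2} (a Julia--Wolff--Carath\'eodory-type statement on the slice $L_{I_w}$) then gives $\IIm_{I_w}\big(\partial_c f(w)\partial_c f(0)^{-1}\big)=0$, and Proposition \ref{Imq02} converts this condition, valid for every $w$, into the statement that $\partial_c f$ is constant, hence $f$ affine. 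These two propositions, which have no analogue in your proposal, are what eliminate the even coefficients. (Separately, your reduction of equality in \eqref{ILTQ2} to equality in \eqref{ILTQ1} via Schwarz rigidity at an interior point is a legitimate shortcut --- it yields $|a_1|=1$ directly --- but it does not remove the need for the argument above.)
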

\noindent As in the complex setting, Theorem \ref{007} can be interpreted as a generalization of the Schwarz Lemma.

The new version of the Landau-Toeplitz Theorem proposed in \cite{Poggi} concerns holomorphic functions whose image is measured with a 
notion of diameter more general than the classic one, the $n$-diameter. 
In the quaternionic setting, the analogue of this geometric quantity is defined as
\begin{defn}\label{n-diametro0}
Let $E \subset \HH$. For every $n \in \mathbb{N}$, $n \geq 2$, the {\em $n$-diameter} of $E$ is defined as
$$d_n(E)= \sup_{w_1,...,w_n \in E} \Big( \prod_{1\leq j<k \leq n} |w_k -w_j|\Big)^{\frac{2}{n(n-1)}}.$$
\end{defn}
\noindent Retracing the approach used in the complex setting, we are able to obtain only the generalization of the first part of the 
statement of the Landau-Toeplitz Theorem for the $n$-diameter. As in the case $n=2$, we need a notion of regular $n$-diameter $\tilde d_n(f(\B))$ 
for the image  of $\B$ through a slice regular function $f$. This notion is a 
generalization of Definition \ref{Id2}, modeled on Definition \ref{n-diametro0} and given in terms of the $*$-product between slice regular 
functions (see Section \ref{preliminari}).
%
For all $n\geq 2$, the regular $n$-diameter turns out to be finite when the $n$-diameter is finite. For this reason, even if it may appear 
awkward, it makes sense to use the regular $n$-diameter in the following statement.

\begin{teo}
Let  $f$ be a slice regular function on  $\B$ such that $\tilde{d}_n(f(\B))=d_n(\B)$. Then
\begin{equation*}
\tilde{d}_n(f(r\B))\leq d_n(r\B) \quad \text{for all} \quad r \in (0,1)
\end{equation*}
and
\begin{equation*}
|\de f(0)|\leq 1.
\end{equation*}
\end{teo}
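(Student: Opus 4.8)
The plan is to follow the scheme of the classical $n$-diameter Landau--Toeplitz theorem, exploiting the fact that the regular $n$-diameter is built so that, at a single point $q$, the quantity to be estimated is the modulus of one genuinely slice regular function of $q$. Set $N=\tfrac{n(n-1)}{2}$ and $d=d_n(\B)$, and, for $u_1,\dots,u_n\in\overline{\B}$, put
$$G_u=\prodstar_{1\le j<k\le n}\bigl(f_{u_k}-f_{u_j}\bigr),$$
so that, by the definition of the regular $n$-diameter (modelled on Definitions \ref{Id2} and \ref{n-diametro0}), for every $r\in(0,1)$ one has $\tilde d_n(f(r\B))^{N}=\sup_{u_1,\dots,u_n\in\overline{\B}}\max_{|q|\le r}|G_u(q)|$. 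After this reduction the proof is a one-variable Schwarz-type estimate applied to each $G_u$, in the spirit of how Theorem \ref{007} follows from the Schwarz Lemma for slice regular functions \cite{GSAdvances}.

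First I would collect the structural facts. Each $f_{u_j}(q)=\sum_{m\ge0}q^m u_j^m a_m$ is slice regular on $\B$, since $\limsup_m|u_j^m a_m|^{1/m}\le\limsup_m|a_m|^{1/m}\le1$; hence so is the $*$-product $G_u$. Because $u_j^0=u_k^0=1$, the degree-zero terms cancel in each factor, so $f_{u_k}(q)-f_{u_j}(q)=q\,(u_k-u_j)a_1+O(q^2)$, and multiplying $N$ such factors through $(q\alpha)*(q\beta)=q^{2}\alpha\beta$ gives
$$G_u(q)=q^{N}\,g_u(q),\qquad g_u(0)=\prod_{1\le j<k\le n}(u_k-u_j)a_1,$$
where $g_u$ is slice regular on $\B$ and the last product is taken in the fixed $*$-order. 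Finally, from $\tilde d_n(f(\B))=\lim_{r\to1^-}\tilde d_n(f(r\B))=d$ we get, for every choice of $u_1,\dots,u_n$,
$$\|G_u\|_{\B}=\lim_{r\to1^-}\max_{|q|\le r}|G_u(q)|\le\lim_{r\to1^-}\tilde d_n(f(r\B))^{N}=d^{N}.$$

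Next comes the Schwarz step. Since $G_u(q)=q^{N}g_u(q)$ and $\|G_u\|_{\B}\le d^{N}$, the Maximum Modulus Principle for slice regular functions (see \cite{libroGSS}) forces $\|g_u\|_{\B}\le d^{N}$ — this is exactly the higher-order Schwarz bound — whence $|G_u(q)|=|q|^{N}|g_u(q)|\le d^{N}|q|^{N}$ for all $q\in\B$. Taking the supremum over $u_1,\dots,u_n\in\overline{\B}$ and over $|q|\le r$ gives $\tilde d_n(f(r\B))^{N}\le d^{N}r^{N}$, that is $\tilde d_n(f(r\B))\le r\,d$; and $r\,d=r\,d_n(\B)=d_n(r\B)$ by Definition \ref{n-diametro0}. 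For the slice derivative, observe that $\de f(0)=a_1$; if $a_1=0$ there is nothing to prove, so assume $a_1\ne0$ and pick, by compactness, $u_1^{\ast},\dots,u_n^{\ast}\in\overline{\B}$ maximizing $\prod_{1\le j<k\le n}|u_k-u_j|$. By Definition \ref{n-diametro0} this maximum equals $d_n(\overline{\B})^{N}=d^{N}$, and the $u_j^{\ast}$ are pairwise distinct; hence $g_{u^{\ast}}(0)=\prod_{j<k}(u_k^{\ast}-u_j^{\ast})a_1$ has modulus $\prod_{j<k}|u_k^{\ast}-u_j^{\ast}|\cdot|a_1|^{N}=d^{N}|a_1|^{N}$ by multiplicativity of the quaternionic modulus. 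Combining with $|g_{u^{\ast}}(0)|\le\|g_{u^{\ast}}\|_{\B}\le d^{N}$ yields $d^{N}|a_1|^{N}\le d^{N}$, so $|\de f(0)|=|a_1|\le1$.

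I do not expect a serious obstacle: contrary to the case $n=2$ of Theorem \ref{007}, no discussion of equality cases is needed. The only points requiring care are technical — having at hand the higher-order Schwarz bound for slice regular functions (immediate from the Maximum Modulus Principle, since a slice regular function vanishing to order $N$ at the origin factors as $q^{N}g$ with $g$ slice regular on the same ball), and tracking the noncommutative $*$-order in the leading coefficient of $G_u$; by multiplicativity of the modulus, however, only $|a_1|$ and the numbers $|u_k-u_j|$ survive in the final estimate, so the ordering is immaterial for the conclusion.
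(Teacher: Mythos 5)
Your proof is correct. For the first inequality it coincides in substance with the paper's: the paper (Lemma \ref{fi1}) introduces the auxiliary function $g_{w_1,\dots,w_n}(q)=d_n(\B)^{-N}q^{-N}$ times the $*$-product of the differences $f_{w_k}-f_{w_j}$ (your $d^{-N}q^{-N}G_u$), observes it is regular because each factor vanishes at the origin, and deduces from the Maximum Modulus Principle that $\varphi_n(r)=\tilde d_n(f(r\B))/(d_n(\B)r)$ is increasing, hence bounded by its limit $1$ as $r\to 1^-$; your ``higher-order Schwarz bound'' is exactly this mechanism phrased pointwise. Where you genuinely diverge is the derivative estimate. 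The paper proves the exact identity $\lim_{r\to 0^+}\varphi_n(r)=|\de f(0)|$, and this costs the most technical part of Lemma \ref{fi1}: the $*$-products are converted into pointwise products by iterating Proposition \ref{trasf} (introducing the transformations $T_{j,k}$), and a convergent subsequence of near-maximizers $(q_m,w_{1,m},\dots,w_{n,m})$ is extracted to identify the limit. You bypass all of this by evaluating $g_{u^\ast}$ at the origin for an extremal configuration $u^\ast$ in $\overline{\B}$, where the coefficient of $q^N$ in the $*$-product is explicitly the ordered product of the quaternions $(u^\ast_k-u^\ast_j)a_1$, of modulus $d^N|a_1|^N$ by multiplicativity of the quaternionic norm; combined with $|g_{u^\ast}(0)|\le \|g_{u^\ast}\|_{\B}\le d^N$ and $d>0$ this yields $|\de f(0)|\le 1$ directly. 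This buys a markedly shorter proof of the theorem as stated, since only the lower bound $\sup_u|g_u(0)|\ge d^N|\de f(0)|^N$ is needed and that is the easy direction; what it gives up is the exact limit value of $\varphi_n$ at $0$, which is the natural starting point for the equality/rigidity discussion that the authors say they cannot complete for $n>2$.
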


\noindent 

Since the $3$-diameter of a $4$-dimensional subset of $\HH$ is attained on a (specific) bidimensional section, we are encouraged to introduce  an appropriate notion  $\hat{d}_3 f(\B)$ of  \emph{slice $3$-diameter} for $f(\B)$, inspired by the power series expansion of the regular $3$-diameter.  This leads to the following complete result.
\begin{teo}[Landau-Toeplitz Theorem for the slice $3$-diameter]
Let $f$ be a slice regular function on $\B$ such that $\hat{d}_3 (f(\B))=d_3(\B)$. Then
\begin{equation}\label{d31I}
\hat{d}_3 (f(r\B))\leq d_3(r\B) \quad \text{for every} \quad r\in (0,1)
\end{equation}
and
\begin{equation}\label{d32I}
|\partial_c f(0)|\leq 1.
\end{equation}
Moreover equality holds in \eqref{d31I}, fore some $r \in (0,1)$, or in \eqref{d32I}, if and only if $f$ is an affine function, $f(q)=a + qb$ with 
$a,b \in \HH$, and $|b|=1$.
\end{teo}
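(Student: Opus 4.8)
The plan is to follow the proof of the case $n=2$ (Theorem~\ref{007}), the slice $3$-diameter being defined so that, once its defining power series is expanded, one works with a single slice regular function per choice of parameters. For $u_1,u_2,u_3\in\overline{\B}$ put $G_{u_1,u_2,u_3}:=(f_{u_1}-f_{u_2})*(f_{u_2}-f_{u_3})*(f_{u_1}-f_{u_3})$, a slice regular function on $\B$, so that $\hat d_3(f(r\B))^3=\max_{u_1,u_2,u_3\in\overline{\B}}\max_{|q|\le r}|G_{u_1,u_2,u_3}(q)|$. First I would record two elementary facts. (a) $d_3(r\B)=r\,d_3(\B)=r\sqrt3$ and $d_3(\B)^3=3\sqrt3$: three points of $r\overline{\B}$ lie in a $2$-plane, and $\prod_{j<k}|w_k-w_j|$ over a triangle inscribed in a disc of radius $r$ is maximal, equal to $3\sqrt3\,r^3$, exactly for the equilateral one. (b) Since $f_{u_i}(0)=a_0$ for every $i$, each difference $f_{u_i}-f_{u_j}$ vanishes at $0$, so $G_{u_1,u_2,u_3}$ vanishes at $0$ to order at least $3$; hence $G_{u_1,u_2,u_3}(q)=q^3\,\Phi_{u_1,u_2,u_3}(q)$ with $\Phi_{u_1,u_2,u_3}$ slice regular on $\B$ and
\[
\Phi_{u_1,u_2,u_3}(0)=(u_1-u_2)\,a_1\,(u_2-u_3)\,a_1\,(u_1-u_3)\,a_1,\qquad a_1=\partial_c f(0).
\]
Since $r\mapsto\hat d_3(f(r\B))$ is nondecreasing with limit $\sqrt3$, the hypothesis gives $|G_{u_1,u_2,u_3}|\le 3\sqrt3$ on $\B$, and then, dividing by $q^3$ and letting $|q|\to1$ (using the maximum modulus principle for slice regular functions), $|\Phi_{u_1,u_2,u_3}|\le 3\sqrt3$ on all of $\B$ for every $u_1,u_2,u_3$.

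The inequalities follow immediately. Fix $u_1,u_2,u_3$ and write $\Phi=\Phi_{u_1,u_2,u_3}$: it maps $\B$ into the closed ball of radius $3\sqrt3$, so $|G_{u_1,u_2,u_3}(q)|=|q|^3|\Phi(q)|\le 3\sqrt3\,|q|^3$; taking the maximum over the parameters and over $|q|\le r$ gives $\hat d_3(f(r\B))^3\le 3\sqrt3\,r^3$, i.e.\ \eqref{d31I}. Evaluating at $q=0$, $|\Phi(0)|=|u_1-u_2|\,|u_2-u_3|\,|u_1-u_3|\,|a_1|^3\le 3\sqrt3$, and taking the $u_j$ to be the vertices of an equilateral triangle inscribed in a unit circle of $\partial\B$ (so $|u_j-u_k|=\sqrt3$ for each pair) yields $3\sqrt3\,|a_1|^3\le 3\sqrt3$, i.e.\ \eqref{d32I}.

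For the equality statement, the ``if'' direction is a direct computation: if $f(q)=a+qb$ with $|b|=1$ then $f_{u_i}-f_{u_j}=q(u_i-u_j)b$, so $G_{u_1,u_2,u_3}(q)=q^3(u_1-u_2)b(u_2-u_3)b(u_1-u_3)b$, whence $\hat d_3(f(r\B))=r\sqrt3=d_3(r\B)$ and $\partial_c f(0)=b$. For the ``only if'' direction, both equality possibilities force $|a_1|=1$: equality in \eqref{d32I} is exactly this, while if \eqref{d31I} is an equality for some $r$ then the maximum modulus principle, applied to the $\Phi_{u_1,u_2,u_3}$ realizing the maximum, forces it to be a constant of modulus $3\sqrt3$, so $|a_1|=1$ by the formula above together with fact (a). Now, once $|a_1|=1$, for every \emph{extremal} triple $u^*=(u_1^*,u_2^*,u_3^*)$ --- equilateral and inscribed in a unit circle of $\partial\B$ --- one has $|\Phi_{u^*}(0)|=3\sqrt3=\sup_{\B}|\Phi_{u^*}|$, so $\Phi_{u^*}$ is a constant of modulus $3\sqrt3$, say $\Phi_{u^*}\equiv 3\sqrt3\,v(u^*)$ with $|v(u^*)|=1$, and $G_{u^*}(q)=3\sqrt3\,q^3\,v(u^*)$ is a monomial of degree three.

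It remains to deduce that $f$ is affine from the fact that $G_{u^*}$ is a degree-three monomial for every extremal triple $u^*$; this is the crux of the proof. I would argue by contradiction: let $n\ge2$ be minimal with $a_n\ne0$, so $f_{u_i}-f_{u_j}=q(u_i-u_j)a_1+q^n(u_i^n-u_j^n)a_n+(\text{order }\ge n+1)$, and the coefficient of $q^{n+2}$ in $G_{u^*}$ equals
\[
(u_1^{*n}-u_2^{*n})\,a_n\,(u_2^*-u_3^*)\,a_1\,(u_1^*-u_3^*)\,a_1\ +\ (\text{the analogous terms with the }n\text{-th power in the other two factors}),
\]
which must vanish for every extremal $u^*$. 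Writing an extremal triple in a slice $\C_I$ as $u_j^*=\zeta\,\omega^{k_j}$ with $|\zeta|=1$, $\omega=e^{2\pi I/3}$, $I\in\s$, $\{k_1,k_2,k_3\}=\{0,1,2\}$, two cases arise. If $n\not\equiv0\pmod3$, the scalars $u_i^{*n}-u_j^{*n}$ are nonzero and the vanishing above is, for each $(\zeta,I)$, a nontrivial $\HH$-linear relation satisfied by $a_n$; expanding in $\zeta$ (over $\C_I\cap\partial\B$) and in $I$ (over $\s$) produces enough independent conditions to force $a_n=0$, contradicting minimality. If $n\equiv0\pmod3$, all $u_i^{*n}$ coincide and that coefficient vanishes identically, giving no information; here I would perturb an extremal triple tangentially inside the circle, $u_j(\tau)=\zeta\,e^{I\sigma_j\tau}\,\omega^{k_j}$ with real $\sigma_j$ not all equal, so that $|\Phi_{u_1(\tau),u_2(\tau),u_3(\tau)}(0)|=3\sqrt3-O(\tau^2)$ while the coefficient of $q^{n+2}$ in $G_{u_1(\tau),u_2(\tau),u_3(\tau)}$ is nonzero already at order $\tau$ (it is $\tau$ times an expression $\HH$-linear in the $\sigma_i-\sigma_j$ and in $a_n$, which cannot vanish for all $(\zeta,I,\sigma_j)$ since $a_n\ne0$), and then choose $q_0$ near $\partial\B$ with $q_0^{n-1}$ suitably oriented to get $|\Phi_{u_1(\tau),u_2(\tau),u_3(\tau)}(q_0)|>3\sqrt3$ for small $\tau>0$, a contradiction. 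The real obstacle in both cases is the verification that the relevant quaternionic coefficients are genuinely nonzero: over $\C$ the analogous relations trivialize because one factors out the Vandermonde product $\prod_{j<k}(u_j^*-u_k^*)$, whereas over $\HH$ the non-commutative interleaving of $a_1$ and $a_n$ must be dealt with by hand.
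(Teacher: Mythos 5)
Your derivation of the two inequalities is sound and runs parallel to the paper's Lemma \ref{fitetto}: factor out $q^3$, use the Maximum Modulus Principle to make the normalized maximum increasing in $r$, and read off the two bounds from the limits at $0$ and at $1$. The equality case, however, has two genuine problems.

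First, the object you analyze is not the one in the statement. Your $G_{u_1,u_2,u_3}=(f_{u_1}-f_{u_2})*(f_{u_2}-f_{u_3})*(f_{u_1}-f_{u_3})$, with $u_1,u_2,u_3$ ranging over all of $\overline{\B}$ and with the coefficients $a_j$ interleaved between the scalar differences, computes (up to ordering of factors) the \emph{regular} $3$-diameter $\tilde d_3$ of Theorem \ref{Dn}, not the \emph{slice} $3$-diameter $\hat d_3$ of Definition \ref{dtetto}, in which the three points are constrained to lie in a single slice $\overline{\B}_I$, the coefficients are normalized to $b_n=a_na_N^{-1}|a_N|$, and all three scalar differences are collected to the \emph{left} of $b_jb_{k-j}b_{n-k}$. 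Because quaternions do not commute, these are different quantities. The paper states explicitly, right after Theorem \ref{Dn}, that the equality case for the regular $n$-diameter is open; the slice $3$-diameter is introduced precisely because its one-slice, coefficients-on-the-right structure is what makes the equality case tractable.

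Second, and independently, the crux of your equality argument is missing. After reducing to ``$G_{u^*}$ is a degree-three monomial for every extremal triple,'' you propose to kill each higher coefficient $a_n$ by inspecting the coefficient of $q^{n+2}$, and you yourself flag that the required non-vanishing of the resulting quaternionic expressions --- a sum of three terms with $a_n$ sandwiched in different positions between factors of $a_1$ and scalar differences --- is ``the real obstacle'' and is left unverified; the perturbation argument for $n\equiv 0 \pmod 3$ is likewise only asserted. This is exactly where the complex proof's Vandermonde factorization fails over $\HH$, and the paper does not attempt this route. Instead, in Theorem \ref{fi.3}, it varies a single vertex of the extremal triangle holomorphically inside one slice via $\zeta\mapsto h^I_z(\zeta)$, applies the Julia--Wolff--Carath\'eodory-type Proposition \ref{Imq2} at the fixed point $\zeta=1$ to obtain $\RRe\big(\frac{\p}{\p\zeta}h^I_z(1)\big)=0$ for every $I\in\s$ and every $z$, and runs an induction showing that every normalized coefficient $b_n$ is real; then $\hat f$ preserves each slice, the slice $3$-diameter collapses to the ordinary $3$-diameter of $\hat f(r\B_I)$, and the complex $n$-diameter Landau--Toeplitz theorem of Burckel--Marshall--Minda--Poggi-Corradini--Ransford finishes the proof. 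Without either adapting that mechanism or actually proving your non-vanishing claims, the characterization of equality is not established.
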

We point out that all the extensions of the Landau-Toeplitz results presented in this paper generalize the Schwarz Lemma to a much larger 
class of image sets. In fact, for all $n\geq 2$, there exist infinitely many subsets of the space $\HH$, which have fixed $n$-diameter, 
do not coincide with a $4$-ball and neither contain, nor are contained, in the $4$-ball. The $4$-bodies of constant width are examples of 
such subsets, presented for instance in \cite{gardner, L-R}.

\section{Preliminaries}\label{preliminari}
Let $\HH$ be the skew field of quaternions, obtained by endowing $\mathbb{R}^4$ with the multiplication operation 
defined on the standard basis $\{1,i,j,k\}$ 
by $i^2=j^2=k^2=-1$ and $ij=k$, \
and then extended by distributivity to all quaternions $q= x_0 +x_1i+x_2j+x_3k$. 
For every $q \in \HH$, we define the {\em real} and {\em imaginary} part of $q$ as $\RRe(q)=x_0$ and $\IIm(q)=x_1i +x_2j +x_3k$, 
its {\em conjugate} as $\bar{q}=\RRe(q)-\IIm(q)$ and its {\em modulus} by $|q|^2=q\bar{q}$. The multiplicative inverse of 
each $q\neq0$ is computed as $q^{-1}=\bar{q}/|q|^2$.
Let $\mathbb{S}$ be the unit $2$-sphere of purely imaginary quaternions, $\mathbb{S}=\{q\in \HH  \ |\ q^2=-1 \}.$
Then, for any $I\in\s$, we will denote by $L_I$ the complex plane $\mathbb{R}+\mathbb{R}I$, 
and, if $\Omega \subset \HH$, we further set $\Omega_I=\Omega \cap L_I$.
Notice that to every $q \in \HH \setminus \mathbb{R}$, we can associate a unique element in $\mathbb{S}$ by the map
$q \mapsto \IIm(q)/|\IIm(q)|=I_q.$
Therefore, for any $q \in \HH \setminus \mathbb{R}$, there exist and are unique, $x, y \in \mathbb{R}$, with $y>0$ and $I_q \in \mathbb{S}$, 
such that $q=x+yI_q$. If $q$ is real, then $I_q$ can be any element of $\mathbb{S}$.

The preliminary results stated in this section will be given for slice regular functions defined on open balls of type 
$B=B(0,R)=\{q \in \HH \, |\, |q|<R \}$. We point out that, in most cases, these results hold, with appropriate changes, for a more general class of domains, introduced in \cite{ext}. 
Let us now recall the definition of slice regularity.
\begin{defn}
A function $f: B=B(0,R) \rightarrow \HH$ is said to be {\em slice regular} if, for every $I\in \mathbb{S}$,  
its restriction $f_I$ to $B_I$, has continuous partial derivatives and satisfies
$$\overline{\partial}_I f(x+yI)=\frac{1}{2}\Big(\frac{\partial}{\partial x}  +I\frac{\partial}{\partial y}\Big)f_I(x+yI)=0$$
for every $x+yI \in B_I$.
\end{defn}
\noindent In the sequel we may refer to the vanishing of $\overline{\partial}_If$ saying that the restriction $f_I$ is holomorphic on $B_I$.
In what follows, for the sake of shortness, we will omit the prefix slice when referring to slice regular functions.
\noindent A notion of derivative, called {\em slice (or Cullen) derivative},  can be given for regular functions by
\begin{equation*}
\de f(x+yI)= \frac{\partial}{\partial x} f(x+yI),
\end{equation*}
\noindent for $x+yI\in B$. This definition is well posed because it is applied only to regular functions and moreover that slice regularity is preserved by slice differentiation.
A basic result connects slice regularity and classical holomorphy, \cite{GSAdvances}:
\begin{lem}[Splitting Lemma]\label{split}
If $f$ is a regular function on $B=B(0,R)$, then for every $I \in \mathbb{S}$ 
and for every $J \in \mathbb{S}$, $J$ orthogonal to $I$, there exist two holomorphic functions $F,G:B_I \rightarrow L_I$, such that 
for every $z=x+yI \in B_I$, it results
$$f_I(z)=F(z)+G(z)J.$$
\end{lem}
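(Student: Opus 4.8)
The plan is to fix the two imaginary units $I$ and $J$ and exploit the fact that, since $J$ is orthogonal to $I$, the set $\{1,I,J,IJ\}$ is an orthonormal real basis of $\HH$; equivalently $\HH = L_I \oplus L_I J$ as real vector spaces, where $L_I = \rr + \rr I$. First I would write the four real components of the restriction $f_I$, say $f_I = f_0 + f_1 I + f_2 J + f_3(IJ)$ with each $f_\ell\colon B_I \to \rr$, and then regroup them into the two candidate functions $F := f_0 + f_1 I$ and $G := f_2 + f_3 I$, both taking values in $L_I$. A direct check using $IJ=(I)(J)$ shows $F + GJ = f_0 + f_1 I + f_2 J + f_3(IJ) = f_I$, so the decomposition exists; the only real content is to verify that $F$ and $G$ are holomorphic.

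To prove holomorphy I would substitute $f_I = F + GJ$ into the regularity condition $\overline{\partial}_I f_I = \tfrac12(\partial_x + I\,\partial_y)f_I = 0$ and compute the action of the operator on each summand separately. The key noncommutative bookkeeping step is that, since $F$ and $G$ take values in $L_I$, the unit $I$ commutes with them, while the external factor $J$ must be carried along on the right. Thus
$$\left(\partial_x + I\,\partial_y\right)(F + GJ) = \left(\partial_x F + I\,\partial_y F\right) + \left(\partial_x G + I\,\partial_y G\right)J,$$
so that the vanishing of $\overline{\partial}_I f_I$ becomes
$$\left(\partial_x F + I\,\partial_y F\right) + \left(\partial_x G + I\,\partial_y G\right)J = 0.$$

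Now I would invoke the direct sum decomposition $\HH = L_I \oplus L_I J$: the first bracket lies in $L_I$ and the second, multiplied by $J$, lies in $L_I J$, two complementary real subspaces, so the identity forces each bracket to vanish on its own:
$$\partial_x F + I\,\partial_y F = 0, \qquad \partial_x G + I\,\partial_y G = 0.$$
Under the identification of $L_I$ with $\C$ sending $1\mapsto 1$ and $I\mapsto i$, these are precisely the Cauchy--Riemann equations, so $F$ and $G$ are holomorphic from $B_I$ to $L_I$; their partial derivatives are continuous because those of $f_I$ are, by the definition of slice regularity. This completes the argument.

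I expect the only delicate point to be the noncommutative computation in the second paragraph --- ensuring that left multiplication by $I$ passes through $F$ and $G$ (legitimate, as they are $L_I$-valued) while the factor $J$ is kept on the right --- together with the observation that $L_I$ and $L_I J$ are complementary, which is exactly what lets the single quaternionic equation split into two independent holomorphy conditions.
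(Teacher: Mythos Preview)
Your argument is correct and is exactly the standard proof of the Splitting Lemma. Note, however, that the paper does not give its own proof of this statement: it is quoted as a preliminary result from \cite{GSAdvances}, so there is nothing to compare against beyond observing that your write-up matches the classical derivation found there.
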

\noindent As proven in \cite{GSAdvances}, 

\begin{teo}
A function $f$ is regular on $B=B(0,R)$ if and only if $f$ has a power series expansion
$$f(q)=\sum_{n \geq 0}q^na_n \quad\text{with} \quad a_n=\frac{1}{n!}\frac{\partial^n f}{\partial x^n}(0)$$
converging absolutely and uniformly on compact sets in $B(0, R)$.
\end{teo}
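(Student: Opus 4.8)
The plan is to prove the two implications separately, in each case using the Splitting Lemma to reduce the quaternionic statement to the classical holomorphic theory on a single slice, and then to patch the slice expansions together through the real axis.

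For the implication ``power series $\Rightarrow$ regular'', I would start from a series $f(q)=\sum_{n\ge 0}q^n a_n$ converging absolutely and uniformly on compact subsets of $B(0,R)$. Fixing $I\in\s$ and writing $q=x+yI\in B_I$, the point $q$ lies in the commutative plane $L_I$, so that $\p_x(x+yI)^n=n(x+yI)^{n-1}$ and $\p_y(x+yI)^n=nI(x+yI)^{n-1}$. A direct computation then gives $\overline{\p}_I(q^n)=\tfrac12\big(nq^{n-1}+I\cdot nI\,q^{n-1}\big)=\tfrac12\big(nq^{n-1}-nq^{n-1}\big)=0$ for every $n$, and multiplication on the right by the constant $a_n$ preserves this vanishing. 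The differentiated series has the same radius of convergence, hence converges uniformly on compact subsets of $B_I$; this both guarantees that $f_I$ has continuous partial derivatives and legitimizes term-by-term differentiation, yielding $\overline{\p}_I f=\sum_n \overline{\p}_I(q^n)a_n=0$. Since $I$ was arbitrary, $f$ is regular.

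For the converse, the harder direction, I would fix $I\in\s$ and a unit $J\in\s$ orthogonal to $I$, and apply the Splitting Lemma to write $f_I(z)=F(z)+G(z)J$ with $F,G\colon B_I\to L_I$ holomorphic on the disc $B_I$ of radius $R$ in $L_I\cong\C$. Classical one-variable theory gives convergent Taylor expansions $F(z)=\sum_n z^n\alpha_n$ and $G(z)=\sum_n z^n\beta_n$ on the whole disc of radius $R$, whence $f_I(z)=\sum_n z^n a_n^{(I)}$ with $a_n^{(I)}=\alpha_n+\beta_n J\in\HH$; the classical radius-of-convergence estimate forces $\limsup_n|a_n^{(I)}|^{1/n}\le 1/R$.

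The crux is then to show that these coefficients do not depend on $I$ and that the resulting series represents $f$ on all of $B$, not merely on one slice. I would observe that a real $x$ belongs to $L_I$ for every $I$, so evaluating on the real axis gives $\sum_n x^n a_n^{(I)}=f(x)$ for all small real $x$ and every $I$; uniqueness of the coefficients of a (real-variable, quaternion-valued) power series then yields $a_n^{(I)}=\tfrac{1}{n!}\tfrac{\p^n f}{\p x^n}(0)$, a quantity manifestly independent of $I$. Calling this common value $a_n$, the estimate above shows $\sum_n q^n a_n$ converges absolutely and uniformly on compact subsets of $B(0,R)$, and since $|q^n a_n|=|q|^n|a_n|$ the convergence depends only on $|q|$. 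Finally, for an arbitrary $q=x+yI_q\in B$ I would rerun the Splitting-Lemma argument on the slice $B_{I_q}$: it produces $f(q)=\sum_n q^n a_n^{(I_q)}=\sum_n q^n a_n$, so the single series $\sum_n q^n a_n$ reproduces $f$ on every slice, hence on all of $B$. I expect the main obstacle to be precisely this patching step---gluing the per-slice expansions into one globally convergent series---which the real-axis coefficient identity resolves without invoking any identity principle.
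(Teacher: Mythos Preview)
The paper does not supply its own proof of this theorem; it is quoted as a known result, introduced with ``As proven in \cite{GSAdvances}''. So there is no in-paper argument against which to compare yours.

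That said, your proposal is correct and is essentially the standard argument (and, up to presentation, the one in \cite{GSAdvances}). The forward direction is the routine term-by-term check that $\overline{\p}_I(q^n a_n)=0$ on each $L_I$, with differentiation under the sum justified by local uniform convergence. For the converse, the Splitting Lemma reduces each slice restriction $f_I$ to a pair of ordinary holomorphic functions on the disc $B_I$, whose Taylor series reassemble into $\sum_n z^n a_n^{(I)}$; evaluating on the real segment $(-R,R)$, common to every slice, pins down $a_n^{(I)}=\tfrac{1}{n!}\,\p_x^n f(0)$ independently of $I$, and multiplicativity of the quaternionic norm then yields global absolute and locally uniform convergence of $\sum_n q^n a_n$ on $B(0,R)$. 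One cosmetic caveat: for real $q$ the symbol $I_q$ is undefined, but any choice of $I\in\s$ serves in that case, so your final patching step is unaffected.
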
  
\noindent In the sequel we will also need the Identity Principle for regular functions, stated here in its ``weak'' version, \cite{GSAdvances}.
\begin{teo}[Identity Principle]\label{Id}
Let $f:B=B(0,R) \rightarrow \HH$ be a regular function. Denote by $Z_f$ the zero set of $f$, $Z_f=\{ q \in B | \, f(q)=0 \}$. 
If there exists $I \in \mathbb{S}$ such that $B_I \cap  Z_f$ has an accumulation point in $B_I$, then
$f$ vanishes identically on $B$.
\end{teo}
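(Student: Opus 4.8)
The plan is to reduce the quaternionic statement to the classical Identity Principle for holomorphic functions of one complex variable, using the Splitting Lemma as the bridge. First I would fix a slice $I \in \s$ for which $B_I \cap Z_f$ has an accumulation point $p \in B_I$, and choose $J \in \s$ orthogonal to $I$. By the Splitting Lemma there exist holomorphic functions $F, G : B_I \rightarrow L_I$ with $f_I(z) = F(z) + G(z)J$ for every $z \in B_I$. Here $B_I = B \cap L_I$ is a (connected) planar disc in the complex plane $L_I \cong \C$, so $F$ and $G$ are genuine holomorphic functions of one complex variable on a domain containing $p$.

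The key algebraic observation is that, since $J$ is orthogonal to $I$, the subspaces $L_I$ and $L_I J$ meet only in $0$. Hence for $z \in B_I \cap Z_f$ the identity $F(z) + G(z)J = 0$, with $F(z), G(z) \in L_I$, forces $F(z) = 0$ and $G(z)J = 0$; as $\HH$ is a division ring and $J \neq 0$, we conclude $G(z) = 0$ as well. Thus both $F$ and $G$ vanish on $B_I \cap Z_f$, a subset of $B_I$ that by hypothesis has the accumulation point $p$ inside $B_I$. I would then invoke the classical one-variable Identity Principle: a holomorphic function on the domain $B_I$ vanishing on a set with an accumulation point in $B_I$ is identically zero. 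Applying this to $F$ and to $G$ yields $F \equiv 0$ and $G \equiv 0$ on $B_I$, whence $f_I \equiv 0$ on the whole slice $B_I$.

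The final step is to pass from vanishing on a single slice to vanishing on all of $B$, and this is where I expect the only real subtlety to lie. Here I would use the power series representation $f(q) = \sum_{n \geq 0} q^n a_n$ with $a_n = \frac{1}{n!}\frac{\partial^n f}{\partial x^n}(0)$. For $q = z \in L_I$ one has $q^n = z^n$, so $f_I(z) = \sum_{n \geq 0} z^n a_n$ is exactly the Taylor expansion of $f_I$ at the origin. The main obstacle is purely one of bookkeeping: I would decompose each coefficient as $a_n = \alpha_n + \beta_n J$ with $\alpha_n, \beta_n \in L_I$, observe that $z^n a_n = z^n \alpha_n + (z^n \beta_n) J$, and thereby identify $F(z) = \sum_n z^n \alpha_n$ and $G(z) = \sum_n z^n \beta_n$ as the two holomorphic summands produced by the Splitting Lemma. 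Since $F \equiv 0$ and $G \equiv 0$, the uniqueness of Taylor coefficients gives $\alpha_n = \beta_n = 0$, hence $a_n = 0$ for every $n$. Therefore the power series of $f$ is identically zero and $f \equiv 0$ on $B$, as claimed.
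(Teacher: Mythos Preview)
The paper does not supply its own proof of this statement; it is quoted as a preliminary result from \cite{GSAdvances}. Your argument is correct and is essentially the standard one: split on the slice $L_I$ via the Splitting Lemma, use the direct-sum decomposition $\HH = L_I \oplus L_I J$ to see that $f_I(z)=0$ forces $F(z)=G(z)=0$, apply the classical one-variable Identity Principle to each component, and then read off $a_n=0$ from the Taylor expansion at the origin.

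One small comment on your last step. Once $f_I\equiv 0$ on $B_I$, an alternative to the coefficient bookkeeping is simply to apply the Representation Formula (stated just after this theorem in the paper): for any $J\in\mathbb{S}$ the value $f(x+yJ)$ is an explicit $\HH$-linear combination of $f(x+yI)$ and $f(x-yI)$, both of which are now zero, so $f\equiv 0$ on $B$ immediately. Your power-series route is equally valid and, given the order in which the preliminaries are presented here, arguably the more self-contained choice.
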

\noindent A basic result, that will be useful in the sequel, is the following (see \cite{ext}).
\begin{teo}[Representation Formula]\label{RF}
Let $f$ be a regular function on $B=B(0,R)$ and let $J\in \mathbb{S}$. Then, for all $x+yI \in B$, the following equality holds
\begin{equation*}
f(x+yI)=\frac{1}{2}\big[ f(x+yJ)+f(x-yJ)\big]+I\frac{1}{2}\big[J\big[f(x-yJ)-f(x+yJ) \big]\big].
\end{equation*}
\end{teo}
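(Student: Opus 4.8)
The plan is to verify the identity first on the monomials that make up the power series expansion of $f$, and then to pass to the limit. By the series representation, $f(q)=\sum_{n\geq 0}q^na_n$, with the convergence absolute and uniform on compact subsets of $B$. The right-hand side of the claimed identity is built from $f$ by evaluating it at the three points $x+yI,\,x+yJ,\,x-yJ$, averaging, and left-multiplying by the fixed units $I$ and $J$; all of these operations are additive, so the right-hand side applied to $f$ is the sum of the right-hand sides applied to the individual terms $q\mapsto q^na_n$. It therefore suffices to prove the formula for a single monomial and then to sum over $n$, the termwise passage to the limit being legitimate because the right-hand side is a fixed, continuous combination of the values of $f$ at finitely many points and the series converges at each of those points.

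The key algebraic fact is that, for every imaginary unit $K\in\mathbb{S}$ and every $n$,
\begin{equation*}
(x+yK)^n=\alpha_n(x,y)+K\,\beta_n(x,y),
\end{equation*}
where $\alpha_n,\beta_n\in\rr[x,y]$ do \emph{not} depend on $K$, $\alpha_n$ is even in $y$ and $\beta_n$ is odd in $y$. This holds because $1$ and $K$ span a commutative real subalgebra of $\HH$ isomorphic to $\C$ with $K$ playing the role of $i$, so the binomial expansion of $(x+yK)^n$ is dictated solely by the relation $K^2=-1$, the same for every $K$; the even powers of $K$ collect into $\alpha_n$ (and carry even powers of $y$), the odd ones into $K\beta_n$ (and carry odd powers of $y$). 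Taking $K=J$, and then replacing $y$ by $-y$ and using the parity of $\alpha_n$ and $\beta_n$, I obtain $(x+yJ)^n=\alpha_n+J\beta_n$ and $(x-yJ)^n=\alpha_n-J\beta_n$, whence $\tfrac12[(x+yJ)^n+(x-yJ)^n]=\alpha_n$ and $\tfrac12[(x-yJ)^n-(x+yJ)^n]=-J\beta_n$.

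Substituting these into the right-hand side evaluated on the single term $q^na_n$, the first summand becomes $\alpha_na_n$, while the half-difference in the second summand equals $-J\beta_na_n$; left-multiplying by $J$ and using $J^2=-1$ turns it into $\beta_na_n$, and left-multiplying by $I$ gives $I\beta_na_n$. Adding the two summands yields $(\alpha_n+I\beta_n)a_n=(x+yI)^na_n$, which is exactly the $n$-th term of $f(x+yI)$. Summing over $n$ and passing to the limit as described completes the argument. The only genuinely delicate point along this route is the interchange of the infinite summation with the finite combination of continuous maps on the right, and this is routine once the convergence of the series at the three points is invoked.

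As an alternative I would set $\tilde f(x+yI)$ equal to the right-hand side and argue by the Identity Principle (Theorem \ref{Id}): specializing $I=J$ collapses $\tilde f$ to $f$ on the whole slice $B_J$ (again using $J^2=-1$), so $\tilde f$ and $f$ coincide on a set with accumulation points in $B_J$, and the conclusion would follow once $\tilde f$ is known to be regular. In this second approach the burden shifts entirely to proving regularity of $\tilde f$, which one checks slice by slice through the Splitting Lemma (Lemma \ref{split}) on $L_J$. I expect the power series route to be the cleaner of the two, with the interchange-of-limits step as its main — but essentially routine — obstacle.
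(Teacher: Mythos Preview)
Your power-series argument is correct. The identity $(x+yK)^n=\alpha_n(x,y)+K\beta_n(x,y)$ with real $\alpha_n,\beta_n$ independent of $K\in\mathbb{S}$ is exactly the right observation, and your computation of the right-hand side on each monomial $q^na_n$ is accurate; the passage to the limit is indeed routine since the three evaluation points $x\pm yJ$ and $x+yI$ all have modulus $\sqrt{x^2+y^2}<R$ and the series converges absolutely there. Your alternative via the Identity Principle would also work, though as you note the regularity check for $\tilde f$ is where the content lies.

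As for comparison: the present paper does not actually prove this theorem. It is quoted in the preliminaries with a reference to \cite{ext}, where the Representation Formula is established in greater generality (on axially symmetric slice domains, not just balls centred at the origin). In that setting no global power-series expansion is available, so the proof in \cite{ext} proceeds instead through the Splitting Lemma on $L_J$, writing $f_J=F+GJ$ with $F,G$ holomorphic and checking directly that the right-hand side defines a regular extension. On a ball $B(0,R)$ your monomial-by-monomial route is the more elementary and transparent one; the Splitting-Lemma route is what is needed, and what your second paragraph gestures at, once one leaves the power-series regime.
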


\noindent The product of two regular functions is not, in general, regular. To guarantee the regularity we need to introduce the following 
multiplication operation, denoted by the character $*$.
\begin{defn} 
Let $f(q)=\sum_{n \geq 0}q^na_n$ and $g(q)=\sum_{n\geq 0}q^nb_n$ be regular functions on $B=B(0,R)$. The {\em $*$-product} of $f$ and $g$ 
is the regular function $f*g: B \rightarrow \HH$ defined by
$$f*g(q)=\sum_{n \geq 0}q^n\sum_{k=0}^na_k b_{n-k}.$$
\end{defn}
\noindent Notice that the $*$-product is associative and is not, in general, commutative. 
The following result clarifies the relation between the $*$-product and the pointwise product of regular functions, \cite{libroGSS}.
\begin{pro}\label{trasf}
Let $f(q)=\sum_{n\geq 0}q^na_n$ and $g(q)=\sum_{n\geq 0}q^nb_n$ be regular functions on $B=B(0,R)$. Then
\begin{equation*}\label{prodstar}
f*g(q)= \left\{ \begin{array}{ll}
 f(q)g(f(q)^{-1}qf(q)) & \text{if} \quad f(q)\neq 0\\
0 & \text{if} \quad f(q)=0
\end{array}
\right.
\end{equation*}
\end{pro}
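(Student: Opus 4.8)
The plan is to reduce both cases of the statement to a single pointwise identity, valid for every $q\in B$:
\begin{equation}\label{intident}
f*g(q)=\sum_{m\geq 0}q^m f(q)\,b_m.
\end{equation}
Once \eqref{intident} is established, the proposition follows in a couple of lines, so the whole content of the argument lies in proving this intermediate factorization.

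To obtain \eqref{intident}, I would first rewrite the Cauchy-product definition of $f*g$ as a double series. Reindexing $\sum_{n\geq 0}q^n\sum_{k=0}^n a_k b_{n-k}$ by setting the inner pair $(k,n-k)=(\ell,m)$ turns it into $f*g(q)=\sum_{\ell,m\geq 0}q^{\ell+m}a_\ell b_m$. Since $f$ and $g$ converge absolutely and uniformly on compact subsets of $B$, the numerical series $\sum_\ell|q|^\ell|a_\ell|$ and $\sum_m|q|^m|b_m|$ both converge, so for each fixed $q\in B$ the double series converges absolutely and may be rearranged freely. Grouping the terms according to the index $m$ and using the crucial fact that powers of the same quaternion $q$ commute, I can factor $q^{\ell+m}=q^m q^\ell$ and pull $q^m$ out of the inner sum:
$$\sum_{\ell,m\geq 0}q^{\ell+m}a_\ell b_m=\sum_{m\geq 0}q^m\Big(\sum_{\ell\geq 0}q^\ell a_\ell\Big)b_m=\sum_{m\geq 0}q^m f(q)\,b_m,$$
which is \eqref{intident}. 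It is essential here to keep $b_m$ on the right, since the coefficients do not commute with $q$.

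The case $f(q)=0$ is then immediate: every summand in \eqref{intident} contains the factor $f(q)=0$, so $f*g(q)=0$. For the case $f(q)\neq 0$, set $s=f(q)$ and note that inner conjugation $x\mapsto s^{-1}xs$ is an $\rr$-algebra automorphism of $\HH$, hence multiplicative, giving $(s^{-1}qs)^m=s^{-1}q^m s$ for all $m$; moreover $|s^{-1}qs|=|q|<R$, so $s^{-1}qs\in B$ and $g(s^{-1}qs)=\sum_{m\geq 0}(s^{-1}qs)^m b_m$ converges. Substituting the power identity and invoking \eqref{intident},
$$s\,g(s^{-1}qs)=\sum_{m\geq 0}s(s^{-1}q^m s)b_m=\sum_{m\geq 0}q^m s\,b_m=\sum_{m\geq 0}q^m f(q)\,b_m=f*g(q),$$
which is precisely $f*g(q)=f(q)g(f(q)^{-1}qf(q))$.

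I expect no serious conceptual obstacle; the difficulties are purely in the bookkeeping. The two points requiring care are the justification of the rearrangement of the double series, handled by absolute convergence on compacta inside $B$, and the verification that the argument $s^{-1}qs$ lies in the domain of $g$, handled by the modulus computation $|s^{-1}qs|=|q|$. The conceptual heart — that powers of a fixed $q$ commute, which is what allows the factorization \eqref{intident}, together with the multiplicativity of inner conjugation — involves no estimates at all. The main discipline throughout is to preserve the correct left/right placement of the noncommuting factors $a_\ell$, $b_m$, and $s$.
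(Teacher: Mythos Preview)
Your proof is correct. The paper does not actually prove this proposition but cites it from \cite{libroGSS}; your argument is essentially the standard one given there, hinging on the same intermediate identity $f*g(q)=\sum_{m\geq 0}q^m f(q)\,b_m$ obtained by rearranging the absolutely convergent double series and then using multiplicativity of inner conjugation.
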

\noindent 
Notice that if $q=x+yI$ (and if $f(q)\neq 0$), then $f(q)^{-1}qf(q)$ has the same modulus and same real part as $q$, hence $T_f(q)=f(q)^{-1}qf(q)$ 
lies in $x+y\mathbb{S}$, the same $2$-sphere as $q$. We have that a zero $x_0+y_0I$ of the function $g$ is not necessarily a zero of $f*g$, 
but one element on the same sphere $x_0+y_0\mathbb{S}$ does.

To conclude this preliminary section we recall a result that is basic for our purposes 
(see \cite{GSAdvances}).
\begin{teo}[Maximum Modulus Principle]\label{PMM}
Let $f: B \rightarrow \HH$ be a regular function. If there exists $I\in\s$ such that the restriction $|f_I|$ has a local maximum in $B_I$, then $f$ is constant in $B$.
In particular, if $|f|$ has a local maximum in $B$, then $f$ is constant in $B$. 
\end{teo}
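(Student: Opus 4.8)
The plan is to reduce the statement, by means of the Splitting Lemma, to a maximum modulus principle for a $\C^2$-valued holomorphic function on a planar disc, and then to dispose of the vector-valued character by passing to a suitable scalar holomorphic function.

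First I would fix $I \in \s$ such that $|f_I|$ attains a local maximum at a point $z_0 = x_0 + y_0 I \in B_I$, and choose $J \in \s$ with $J$ orthogonal to $I$. By the Splitting Lemma there exist holomorphic functions $F, G \colon B_I \to L_I$ with $f_I(z) = F(z) + G(z)J$ for every $z \in B_I$. Since $\{1, I, J, IJ\}$ is an orthonormal basis of $\HH$ and $F(z), G(z) \in L_I = \rr + \rr I$, the two summands $F(z)$ and $G(z)J$ are orthogonal, whence
\[
|f_I(z)|^2 = |F(z)|^2 + |G(z)|^2 \quad \text{for all } z \in B_I.
\]
Identifying $L_I$ with $\C$, the map $z \mapsto (F(z), G(z))$ is thus a holomorphic map from the disc $B_I$ into $\C^2$ whose Euclidean norm $(|F|^2+|G|^2)^{1/2} = |f_I|$ has a local maximum at $z_0$.

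Next I would handle this vector-valued situation through the scalar auxiliary function
\[
h(z) = F(z)\,\overline{F(z_0)} + G(z)\,\overline{G(z_0)},
\]
which is holomorphic on $B_I$ with values in $L_I \cong \C$. By the Cauchy--Schwarz inequality $|h(z)| \le |f_I(z)|\,|f_I(z_0)| \le |f_I(z_0)|^2 = h(z_0)$ for $z$ near $z_0$, so $|h|$ has a local maximum at $z_0$. If $f_I(z_0) = 0$ the local maximality of $|f_I|$ forces $f_I \equiv 0$ near $z_0$ and hence, componentwise, $F \equiv G \equiv 0$ on the connected set $B_I$; otherwise the classical (scalar) maximum modulus principle together with the identity theorem for holomorphic functions gives $h \equiv |f_I(z_0)|^2$ on $B_I$. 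From $|f_I(z_0)|^2 = |h(z)| \le |f_I(z)|\,|f_I(z_0)|$ I would then deduce $|f_I(z)| \ge |f_I(z_0)|$ for all $z \in B_I$, which combined with the local maximality yields $|f_I| \equiv |f_I(z_0)|$ near $z_0$; the resulting equality in Cauchy--Schwarz forces $(F(z), G(z))$ to be a scalar multiple of $(F(z_0), G(z_0))$ there, and the normalization imposed by $h$ pins this scalar down to $1$. In either case $(F, G)$, and hence $f_I$, is constant on $B_I$.

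Finally, restricting the constant function $f_I$ to the real segment $B_I \cap \rr$ and using the power series expansion $f(q) = \sum_{n \ge 0} q^n a_n$, I would conclude that $a_n = 0$ for every $n \ge 1$, so that $f \equiv a_0$ on all of $B$; alternatively, the Identity Principle applied to $f - a_0$ gives the same conclusion. For the ``in particular'' statement, a local maximum of $|f|$ in $B$ at a point $q_0 = x_0 + y_0 I_0$ is in particular a local maximum of the restriction $|f_{I_0}|$ in $B_{I_0}$, so the first part applies verbatim. I expect the main obstacle to be precisely the vector-valued nature of $|f_I|^2 = |F|^2 + |G|^2$, which is not the square modulus of a single holomorphic function; the auxiliary function $h$ is the device that reduces it to the scalar maximum modulus principle.
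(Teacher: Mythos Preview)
The paper does not prove this theorem: it is stated among the preliminaries with a reference to \cite{GSAdvances}, so there is no proof in the text to compare against. Your argument is correct. The route via the Splitting Lemma, the auxiliary scalar function $h(z)=F(z)\overline{F(z_0)}+G(z)\overline{G(z_0)}$, and Cauchy--Schwarz is a clean and standard way to reduce the $\C^2$-valued maximum modulus to the classical scalar one; the equality case in Cauchy--Schwarz together with the normalization coming from $h\equiv|f_I(z_0)|^2$ does pin down $(F,G)$ near $z_0$, and the identity theorem then propagates constancy to all of $B_I$. The final step---reading off the power-series coefficients on the real segment, or equivalently applying the Identity Principle to $f-a_0$---correctly globalizes from $B_I$ to $B$.
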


\section{The Landau-Toeplitz Theorem for regular functions}\label{sec2.1}

In this section we will prove the analogue of the celebrated Landau-Toeplitz
Theorem for holomorphic functions, \cite{Pog2, L-T}, in the new setting of (quaternionic)
regular functions. To reach the aim, we will need a few steps.

Denote by $\langle \ ,\  \rangle$ the scalar product of $ \mathbb{R}^4$, and by $\times$ the vector product of $\rr^3$. Recall that for all purely 
imaginary quaternions $u,v$ the customary equality $uv=-\langle u, v\rangle +u\times v$ holds. If $w=x+yL\in \HH$, then for all $I\in \s$, $\langle w, I \rangle = \langle yL, I\rangle = -\RRe (yLI)= -\RRe (wI)$.

\begin{defn}\label{comp}
Let $I \in \mathbb{S}$. For any  $w \in \HH$ we define the imaginary component of $w$ along $I$ as
$\IIm_{I}(w)=\langle w, I \rangle=-\RRe (wI)$.
\end{defn}  

\noindent We are  now ready to prove a first preliminary result.

\begin{pro}\label{Imq2}
Let $w \in B=B(0,R)$, $0<|w|=r<R$, and let $g$ be a holomorphic function on $ B\cap L_{I_w}$. If  
\begin{equation}\label{ipowolff}
g(w)=w \quad \text{and} \quad r=\max_{z \in r\overline {\B}_{I_w}} |g(z)|, 
\end{equation}
then $\IIm_{I_w} ( \partial_c g(w))=0$.
\end{pro}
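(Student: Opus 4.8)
The plan is to read off the conclusion from a single first‑order stationarity condition: the hypothesis forces $w$ to be a maximum point of $|g|^2$ on the small circle $\Gamma=\{z\in L_{I_w}:|z|=r\}$, and the vanishing of the tangential derivative of $|g|^2$ there, combined with the Cauchy--Riemann relation on the slice $L_{I_w}$, immediately yields $\IIm_{I_w}(\partial_c g(w))=0$. Set $I=I_w$ and write $w=x_0+y_0I$ with $x_0^2+y_0^2=r^2$. Since $g$ is holomorphic on $B_I=\{z\in L_I:|z|<R\}$ and $r<R$, the real‑valued function $h(z)=|g(z)|^2=\langle g(z),g(z)\rangle$ is real‑analytic on a neighbourhood of the closed disc $r\overline{\B}_I$. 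By hypothesis $h(z)\le r^2$ on $r\overline{\B}_I$, while $h(w)=|w|^2=r^2$; hence $w$ maximizes $h$ along $\Gamma$, and the derivative of $h$ in the tangent direction $-y_0\,\partial_x+x_0\,\partial_y$ to $\Gamma$ at $w$ vanishes:
\[
-y_0\,\partial_x h(w)+x_0\,\partial_y h(w)=0 .
\]

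Next I would compute the two partials. From $h=\langle g,g\rangle$ one gets $\partial_x h=2\langle\partial_c g,g\rangle$ and $\partial_y h=2\langle\partial_y g,g\rangle$, where $\partial_c g=\partial_x g$ on the slice; and from holomorphy, $\overline{\partial}_I g=\frac{1}{2}(\partial_x+I\partial_y)g_I=0$, so $\partial_y g_I=I\,\partial_c g$ on $B_I$. Using $g(w)=w$, the stationarity condition becomes
\[
-y_0\,\langle\partial_c g(w),w\rangle+x_0\,\langle I\,\partial_c g(w),w\rangle=0 .
\]
Now pick $J\in\s$ with $J\perp I$, so that $\{1,I,J,IJ\}$ is an orthonormal basis of $\HH$, and write $\partial_c g(w)=a+bI+cJ+d\,IJ$ with $a,b,c,d\in\rr$; by Definition \ref{comp}, $b=\IIm_I(\partial_c g(w))$. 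Then $\langle\partial_c g(w),w\rangle=ax_0+by_0$, and since $I\,\partial_c g(w)=-b+aI-dJ+c\,IJ$ we get $\langle I\,\partial_c g(w),w\rangle=-bx_0+ay_0$. Substituting,
\[
-y_0(ax_0+by_0)+x_0(-bx_0+ay_0)=-b(x_0^2+y_0^2)=-b\,r^2=0 ,
\]
so $b=\IIm_{I_w}(\partial_c g(w))=0$, as claimed.

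The proof is short, so there is no real obstacle; the points to be careful about are (i) justifying that the hypothesis genuinely yields first‑order stationarity of $h$ along $\Gamma$ — which rests only on $h$ being of class $C^1$ up to $\Gamma$, guaranteed by $r<R$ — and (ii) the quaternionic bookkeeping, i.e. the identity $\partial_y g_I=I\,\partial_c g$ and the expansion of $I\,\partial_c g(w)$ in the $I$‑adapted basis (note that $g$ maps into $\HH$, not into $L_I$, which is exactly why the one‑dimensional boundary Schwarz argument must be replaced by this computation). Equivalently, one could run it through the Splitting Lemma: writing $g_I=F+GJ$ with $F,G:B_I\to L_I$ holomorphic, $g(w)=w$ forces $F(w)=w$ and $G(w)=0$, the tangential condition then gives $F'(w)\in\rr$, and since $\partial_c g(w)=F'(w)+G'(w)J$ has vanishing $I$‑component, the conclusion follows; but the direct computation above seems cleaner.
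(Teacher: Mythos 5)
Your proof is correct, and it rests on exactly the same first-order idea as the paper's: the hypothesis makes $w$ a maximum point of $|g|^2$ on the circle of radius $r$ in $L_{I_w}$, the tangential derivative there vanishes, and substituting $g(w)=w$ into that stationarity condition isolates the $I_w$-component of $\partial_c g(w)$. (Your parametrization-free tangent vector $(-y_0,x_0)$ is the velocity at $\theta=0$ of the paper's curve $\theta\mapsto we^{I\theta}$, so the two stationarity conditions coincide.) Where you genuinely diverge is in the bookkeeping: the paper invokes the Splitting Lemma, writes $g=F+GJ$ with $F,G$ holomorphic and $L_I$-valued, deduces $F(w)=w$, $G(w)=0$ from $g(w)=w$, and reduces everything to $\IIm_I(F'(w))=0$; you instead keep $g$ quaternion-valued, use the slice Cauchy--Riemann relation $\partial_y g_I=I\,\partial_c g$ directly, and expand $\partial_c g(w)$ in the orthonormal basis $\{1,I,J,IJ\}$ so that the real bilinear form $\langle\cdot,\cdot\rangle$ kills everything except the coefficient $b=\IIm_I(\partial_c g(w))$. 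Your route avoids introducing the auxiliary pair $(F,G)$ and makes the cancellation $-b(x_0^2+y_0^2)$ completely explicit, at the cost of the basis computation; the paper's route localizes the whole argument into the single complex function $F$, which is closer in spirit to the boundary Schwarz/Julia--Wolff picture the authors mention in the subsequent remark. All the delicate points you flag (smoothness of $|g|^2$ up to the circle since $r<R$, and the sign in $\partial_y g_I=I\,\partial_c g$ coming from $I^{-1}=-I$) check out.
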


\begin{proof}
To simplify the notation, let us set $I=I_w$.
Define $\varphi: \mathbb{R} \rightarrow \mathbb{R}$ as the function 
$\varphi(\theta)=|g(we^{I\theta})|^2.$
The Splitting Lemma implies that for every $J \in \mathbb{S}$ orthogonal to $I$ there exist $F,G: B_I \rightarrow L_I$, holomorphic 
functions, such that 
$
 g(z)=F(z)+G(z)J$ for every $z \in B_I.
$
A direct computation shows that
$
\varphi(\theta)=  F(we^{I\theta})\overline{F(we^{I\theta})} + G(we^{I\theta})\overline{G(we^{I\theta})}.
$\,
Hence
\begin{equation*} 
\begin{aligned}
\varphi'(\theta)&=-2 \IIm_I \big(we^{I\theta}\big(F'(we^{I\theta})\overline{F(we^{I\theta})}+
G'(we^{I\theta})\overline{G(we^{I\theta})}\big)\big),
\end{aligned}
\end{equation*}
where $F'$ and $G'$ denote, respectively, the complex derivatives of $F$ and
$G$ in $B_I$.
Since, by hypothesis, $\theta=0$ is a maximum for $\varphi$, it turns 
out 
that
\begin{equation}\label{fi}
0= \varphi'(0)=-2\IIm_I \big(w\big(F'(w)\overline{F(w)}+ G'(w)\overline{G(w)}\big)\big).
\end{equation}
Moreover $w=g(w)=F(w)+G(w)J ,$ 
which implies $F(w)=w$ and $G(w)=0.$
Putting this values in equation \eqref{fi} we have
$ 0=-2\IIm_I (w F'(w)\bar{w})=-2|w|^2 \IIm_I(F'(w) )$
which yields $\IIm_I(F'(w))=0.$
Finally, recalling the definition of the slice derivative, and Definition \ref{comp},  we get 
$$\IIm_{I} (\partial_c g(w))=\IIm_{I}\big(F'(w)+G'(w)J\big)=\IIm_{I}(F'(w))=0.$$
\end{proof}

\begin{oss}
The previous result can be interpreted as a consequence of the 
Julia-Wolff-Carath\'eodory Theorem (see for instance \cite{abate, Burckel}). In fact hypotheses \eqref{ipowolff} yield that
$g:r\B_{I}\to r\B$ and that $w$ is a boundary fixed point for the restriction of $g$ to $r\B_I$. 
Hence, if we split  the function $g$ as $g(z)=F(z)+G(z)J$, for $z\in r\B_{I}$, 
we have that  $w$ is a Wolff point for $F: r\B_I \to r\B_I$.
\end{oss}

The proof of the classical Landau-Toeplitz Theorem in the 
setting of holomorphic maps, \cite{Poggi},  relies upon the analogue of Proposition
\ref{Imq2}, which  is not sufficient for our purposes in the
quaternionic environment. In fact we need the following

\begin{pro}\label{Imq02}
Let $g:\B \rightarrow \HH$ be a regular function such that $
\IIm_{I_q} (g(q))=0
$ for every $q \in \B$. Then $g$ is a real constant function. 
\end{pro}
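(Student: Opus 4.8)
The plan is to use the Splitting Lemma to turn the geometric hypothesis into a vanishing statement for a complex component of $g$ on each slice $\B_I$, to deduce that the corresponding holomorphic component is a real constant, and finally to let the slice plane $L_I$ vary in order to kill all the coefficients of the power series of $g$ except a real constant term.

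Concretely, I would first fix $I\in\s$ and an orthogonal imaginary unit $J$, set $K=IJ$ so that $\{1,I,J,K\}$ is an orthonormal basis of $\HH$, and apply the Splitting Lemma to write $g_I(z)=F(z)+G(z)J$ with $F,G\colon \B_I\to L_I$ holomorphic. Writing $F=a+bI$ and $G=c+dI$ with $a,b,c,d$ real-valued on $\B_I$, a direct expansion gives $g_I(z)=a(z)+b(z)I+c(z)J+d(z)K$, so that $\IIm_I(g_I(z))=\langle g_I(z),I\rangle=b(z)$. Since every $z=x+yI\in\B_I$ with $y\neq0$ satisfies $I_z=\pm I$, the hypothesis forces $b$ to vanish off the real axis, hence on all of $\B_I$ by continuity. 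Then $F=a$ is a real-valued holomorphic function on a disc, so $F$ is a real constant. Translating this into the power series $g(q)=\sum_{n\ge0}q^na_n$, by writing $a_n=\beta_n+\gamma_nJ$ with $\beta_n,\gamma_n\in L_I$, so that $F(z)=\sum_{n\ge0}z^n\beta_n$, one gets $\beta_0\in\rr$ and $\beta_n=0$ for all $n\ge1$, i.e. $a_n\in L_IJ$ for every $n\ge1$.

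Next, since $I$ and $J$ were arbitrary, I would specialize to $I=i,j,k$ (each paired with a suitable orthogonal imaginary unit) to see that each $a_n$ with $n\ge1$ lies in the intersection of the three coordinate planes $\rr j+\rr k$, $\rr i+\rr k$ and $\rr i+\rr j$, which is $\{0\}$; hence $g\equiv a_0$ is constant. Finally, evaluating the hypothesis at points $q=yI$ with $y\in(0,1)$ gives $\langle a_0,I\rangle=\IIm_I(g(q))=0$ for every $I\in\s$, so $\IIm(a_0)=0$ and $g$ is a real constant, as claimed. I expect the only mildly delicate points to be the passage to the real axis (handled by continuity of the component $b$) and the bookkeeping of the fact that the splitting unit $J$ depends on the choice of $I$; the genuinely substantive point is the observation that letting the slice plane vary is exactly what forces all the higher coefficients of $g$ to vanish.
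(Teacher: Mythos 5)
Your proposal is correct and follows essentially the same route as the paper: decompose $g$ on each slice via the Splitting Lemma, observe that the hypothesis kills the $I$-component of the $L_I$-valued holomorphic part so that it must be a real constant, and then let the slice vary to annihilate all higher coefficients. The only (harmless) differences are that you make the real-axis/continuity point and the $I_q=\pm I$ ambiguity explicit, and you conclude by intersecting the three coordinate planes for $I=i,j,k$ where the paper simply invokes the arbitrariness of $I\in\s$.
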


\begin{proof}
Let $g(q)=\sum_{n \geq 0}q^n a_n$ on $\B$. For any $I\in \s$ we split the coefficient $a_n$ as $b_n+c_nJ$ with $b_n, c_n \in L_I$ and $J\in \s$ orthogonal to $I$. By hypothesis we have
\[
0=\IIm_I(g(z))=\IIm_I\Big(\sum_{n \geq 0}z^n (b_n+c_nJ)\Big)=\IIm_I\Big(\sum_{n \geq 0}z^n b_n\Big)
\]
for all $z\in \B_I$. As a consequence of the Open Mapping Theorem the holomorphic map $\sum_{n \geq 0}z^n b_n$ is constant, i.e., $b_n=0$ for all $n>0$. Therefore the component of each $a_n$ along $L_I$ vanishes for all $n>0$. Since $I\in \s$ is arbitrary, this implies $a_n=0$ for all $n>0$. The hypothesis yields that $a_0\in \rr$.

\end{proof}

A basic notion used to state the classical 
Landau-Toeplitz Theorem is the diameter of the images of holomorphic
functions. In the new quaternionic
setting, due to the fact that a composition of regular functions is not regular
in general, the definition of a ``regular'' diameter for the images of
regular functions requires a peculiar approach.

\begin{defn}
    
Let $f:\B \rightarrow \HH$ be a regular function, $f(q)=\sum_{n \geq 0}q^n
a_n$, 
and let $u\in \mathbb{H}$. We define the {\em regular composition} 
of $f$ with the function $q\mapsto qu$ as
$$
f_u(q)=\sum_{n \geq 0}(qu)^{*n} a_n=\sum_{n \geq 0}q^n u^n a_n.
$$
\end{defn}
\noindent Notice that, if $|u|=1$,  the radius of convergence of the series
expansion for $f_u$ is the same as the one for $f$.
Moreover, if $u$ and $q_0$ lie in the same plane $L_I$,  then $u$ and $q_0$ commute
 and hence $f_u(q_0)=f(q_0u)$. In particular, if $u \in \mathbb{R}$,
 then $f_u(q)=f(qu)$ 
for every $q$.
We now have  all the necessary tools to give the following
\begin{defn}
Let $f:\B \rightarrow \HH$ be a regular function. For $r \in (0,1)$, we define the {\em regular diameter} of the image of $r\B$ under $f$ as 
$$\tilde{d}_2(f(r\B))=\max_{ u,v \in \overline{\B} }\max_{|q| \leq r}|f_u(q)-f_v(q)|.$$
Moreover we define the {\em regular diameter} of the image of   $\B$ under $f$ as
\begin{equation}\label{diametror}
\tilde{d}_2(f(\B))=\lim_{r \to 1^{-}}\tilde{d}_2(f(r\B)).
\end{equation}
\end{defn}
\begin{oss}\label{increasing} By the Maximum Modulus Principle for regular functions, $\tilde{d}_2(f(r\B))$ is an increasing 
    function of $r$, 
and hence limit \eqref{diametror} always exists. 
So $\tilde{d}_2(f(\B))$ is well defined.
\end{oss}

\noindent Let $E$ be a subset of $\HH$. We will denote by $\, \Diam E=\displaystyle{\sup_{q,w\in E}|q-w|},\, $
the classical diameter of $E$.

\begin{pro}\label{2d2}
Let $f$ be a regular function on $\B$. Then the following inequalities hold
\[\Diam (f(\B))\le\tilde{d}_2(f(\B))\leq 2\Diam(f(\B)).\]
\end{pro}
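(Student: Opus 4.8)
**The plan is to prove the two inequalities $\Diam(f(\B)) \le \tilde d_2(f(\B))$ and $\tilde d_2(f(\B)) \le 2\Diam(f(\B))$ separately.**

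\textbf{Lower bound.} The first inequality should be nearly immediate from the definitions. For the regular diameter, taking $u = v = 1 \in \overline\B$ gives $f_1 = f_{-1}$... — actually the cleaner choice is $u \in \overline\B$ arbitrary and $v$ real, but the sharpest move is: for any $q_0, w_0 \in \B$ with $|q_0|, |w_0| < r$, pick a common plane. Concretely, given two points $p_1 = f(q_1)$ and $p_2 = f(q_2)$ in $f(r\B)$, I want to realize $|p_1 - p_2|$ as $|f_u(q) - f_v(q)|$ for suitable $u, v, q$. Writing $q_i = |q_i| \zeta_i$ with $\zeta_i$ on the unit sphere of $\HH$, and choosing $q$ to be a positive real with $|q| \le r$ lying "between", one uses the observation recorded just before this proposition: if $u$ and $q$ lie in the same plane $L_I$ then $f_u(q) = f(qu)$. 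So fixing $q = s \in (0, r]$ real and letting $u = q_1/s$, $v = q_2/s$ (both in $\overline\B$ as long as $|q_1|, |q_2| \le s \le r$, which can be arranged by taking $s$ close to $r$), we get $f_u(s) = f(q_1)$ and $f_v(s) = f(q_2)$ since $s$ is real and commutes with everything. Hence $|f(q_1) - f(q_2)| \le \tilde d_2(f(r\B))$, and taking the sup over $q_1, q_2$ and then $r \to 1^-$ gives $\Diam(f(\B)) \le \tilde d_2(f(\B))$.

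\textbf{Upper bound.} For the second inequality, fix $r \in (0,1)$ and $u, v \in \overline\B$, and let $q$ with $|q| \le r$ realize (or nearly realize) the max defining $\tilde d_2(f(r\B))$. I want to bound $|f_u(q) - f_v(q)|$ by $2\Diam(f(r'\B))$ for some $r' < 1$. The key is to use Proposition \ref{trasf}: for each fixed $u$, the function $f_u(q) = f(q)*(\text{something})$... more directly, since $f_u(q) = \sum q^n u^n a_n$ and $|u| \le 1$, one shows $f_u(q)$ always lies in $\overline{f(r\B)}$ — or rather in the closed convex-free "reach" of $f$. The cleanest route: by Proposition \ref{trasf} applied to the $*$-product formulation, for $|q| = \rho \le r$ and $u \in \overline\B$ with $|u| = 1$, $f_u(q) = f(q u')$ where $u' = T(q)^{-1} \cdots$ is a point on the sphere $|q|\mathbb{S}$... hence $f_u(q) \in f(\rho \mathbb{S}) \subset f(r\B)$ when $|u|=1$, and for $|u|<1$ one rescales. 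Therefore both $f_u(q)$ and $f_v(q)$ lie in $\overline{f(r\B)}$, giving $|f_u(q) - f_v(q)| \le \Diam(\overline{f(r\B)}) = \Diam(f(r\B)) \le 2\Diam(f(\B))$ — which in fact gives the better constant $1$, not $2$. Since the stated bound has a factor $2$, the intended argument is likely more naive: $|f_u(q) - f_v(q)| \le |f_u(q) - f(0)| + |f(0) - f_v(q)|$ and each term is a "radius" bounded by $\Diam f(\B)$ via the above sphere argument applied with one endpoint replaced by $f(0) = a_0 \in \overline{f(r\B)}$. Taking $r \to 1^-$ closes it.

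\textbf{Main obstacle.} The delicate point is justifying that $f_u(q)$ lies in the closure of $f(r\B)$ (equivalently $f(\B)$) for $|u| \le 1$ — this is exactly where the non-commutativity bites and where Proposition \ref{trasf} must be invoked carefully: one has to check that $T_f(\cdot)$-type twisting keeps the argument on the correct sphere and inside $r\B$, and handle the $|u| < 1$ case by composing with a real dilation (legitimate since dilation by a real scalar commutes with $q^n$). A secondary subtlety is the passage to the limit $r \to 1^-$ on both sides, which is fine because $\tilde d_2(f(r\B))$ is increasing in $r$ (Remark \ref{increasing}) and $\Diam(f(r\B))$ is increasing too. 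I do not expect the inequalities to be tight simultaneously, so no equality analysis is needed here.
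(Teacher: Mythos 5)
Your lower bound is correct and is essentially the paper's own argument: you realize $f(q_1)$ and $f(q_2)$ as $f_u(s)$ and $f_v(s)$ for a common real radius $s$, using that a real point commutes with everything (the paper takes $s=|w|$ with $|w|=\max(|q|,|w|)$, but this is the same idea).

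The upper bound has a genuine gap, and it sits exactly at the point you flag as the ``main obstacle.'' Your argument needs $f_u(q)\in\overline{f(r\B)}$ for every $u\in\overline{\B}$ and $|q|\le r$, i.e.\ that $f_u(q)$ is an actual value of $f$ at some point of $r\overline{\B}$. Proposition \ref{trasf} does not deliver this: $f_u(q)=\sum_n q^nu^na_n$ is not the $*$-product of $f$ with a fixed function, and any attempt to rewrite $q^nu^n$ as $(q')^n u'^{\,n}\cdots$ produces a different conjugation for each monomial, so there is no single point $q'$ with $f_u(q)=f(q')$ unless $u$ and $q$ commute, i.e.\ lie in a common plane $L_I$ --- which is precisely the only case covered by the remark preceding the definition of $f_u$. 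Note also that if your membership claim were true it would give $\tilde{d}_2(f(\B))=\Diam(f(\B))$ for every regular $f$ (you observe yourself that you obtain the constant $1$ rather than $2$), which would make the notion of regular diameter redundant; that should have been a warning sign. The fallback via the triangle inequality through $f(0)$ does not repair this, since bounding $|f_u(q)-f(0)|$ by $\Diam f(\B)$ requires the same unproved membership. What the paper actually does is apply the Representation Formula (Theorem \ref{RF}) to the regular function $f_u-f_v$: choosing $J,K\in\s$ with $u\in L_J$, $v\in L_K$, the values $f_u(x\pm yJ)=f((x\pm yJ)u)$ and $f_v(x\pm yK)=f((x\pm yK)v)$ \emph{are} genuine values of $f$ on $r\overline{\B}$ (on those slices the variable commutes with $u$, resp.\ $v$), and the formula expresses $f_u(x+yI)-f_v(x+yI)$ for arbitrary $I$ as $\tfrac12$ times a sum of four differences of such values, each bounded by $\Diam f(r\B)$ --- which is exactly where the factor $2$ comes from. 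This slice-plus-representation step is the missing ingredient in your write-up.
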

\begin{proof}
In order to prove the first inequality, let $r\in(0,1)$ and consider $q,w \in r\overline{\B}$. We want to bound the quantity $|f(q)-f(w)|$. 
Suppose without loss of generality that $|w|\geq |q|$ and moreover that $w\neq 0$. We have
\begin{equation}\label{sopra}
\begin{aligned}
|f(q)-f(w)|&=\left|f\left(q \frac{|w|}{|w|}\right)- f\left(w\frac{|w|}{|w|}\right)\right|=\left|f_{\frac{q}{|w|}}\left(|w|\right)- f_{\frac{w}{|w|}}\left(|w|\right)\right|.
\end{aligned}
\end{equation}
Where the last equality is due to the fact that $|w|$, being real, commutes with both $q/|w|$ and $w/|w|$. 
Moreover, since $q/|w|\in \overline{\B}$ and $w/|w|\in \p \B$ equation \eqref{sopra} yields
\[|f(q)-f(w)|\le\max_{u,v\in \overline{\B}}|f_u(|w|)-f_v(|w|)|\le\max_{u,v\in \overline{\B}}\max_{|q|\le r}|f_u(q)-f_v(q)|=\tilde{d}_2(f(r\B)).\]
This implies that  $\, \Diam(f(r\overline{\B}))\le\tilde{d}_2(f(r\B)).$
Since the previous inequality holds for any $r\in(0,1)$, we obtain that
\[\Diam(f(\B))=\lim_{r\to 1^-}\Diam(f(r\overline{\B}))\le\lim_{r\to 1^-}\tilde{d}_2(f(r\B))=\tilde{d}_2(f(\B)).\]

To show the missing inequality, let $u,v \in \overline{\B}$, $r\in (0,1)$, and let $J,K$ be elements of $\mathbb{S}$ 
such that $u \in L_J$ and $v \in L_K$. Using the Representation Formula \ref{RF}, and taking into account that $u$ and $x+yJ$ commute as well as $v$ and $x+yK$, 
we get that for all $q=x+yI \in r\overline{\B}$  

\begin{equation}\label{finito}
\begin{aligned}
&|f_u(q)-f_v(q)|=\frac{1}{2}| ( f((x+yJ)u)-(f(x+yK)v))+( f((x-yJ)u)-f((x-yK)v)) \\
&+IJ(f((x-yJ)u)-f((x+yJ)u))-I K(f((x-yK)v)-f((x+yK)v)) | \\
&\leq \frac{1}{2}| f((x+yJ)u)-(f(x+yK)v)| +\frac{1}{2}| f((x-yJ)u)-f((x-yK)v)|\\
&+\frac{1}{2}|f((x-yJ)u)-f((x+yJ)u)|+\frac{1}{2}| f((x-yK)v)-f((x+yK)v)|\\
&\leq 2\Diam f(r\B).
\end{aligned}
\end{equation}
Since inequality \eqref{finito} holds for every $u,v \in \overline{\B}$ and for every $q \in r\overline{\B}$, we get 
\begin{equation}\label{limitazione}
\tilde{d}_2(f(r\B))=\max_{ u,v \in \overline{\B} }\max_{|q| \leq r}|f_u(q)-f_v(q)|\leq 2 \Diam f(r\B).
\end{equation}
Moreover, since inequality \eqref{limitazione} holds for every $r\in (0,1)$, we obtain 
$\tilde{d}_2(f(\B))\le2\Diam(f(\B)).$
\end{proof}

Notice that if $f$ is an affine function, say $f(q)=a +qb$, then
$\tilde{d}_2(f(r\B))=|b|\Diam (r\B)=|b|r\Diam(\B)$ for every $ r \in
(0,1).$
In particular, if $f$ is constant, then $\tilde{d}_2(f(r\B))=0$.
Moreover, the regular diameter $\tilde{d}_2(f(r\B))$ is invariant under
translations; in fact, if $g(q)=f(q)-f(0)$, then 
$\tilde{d}_2(g(r\B))=\tilde{d}_2(f(r\B))$ for every $r \in (0,1)$.

\begin{teo}[Landau-Toeplitz for regular functions]\label{f0q}
Let $f:\B\rightarrow \HH$ be a regular function such that $\tilde{d}_2(f(\B))=\Diam \B=2$. Then 
\begin{equation}\label{LTQ2}
\tilde{d}_2(f(r\B)) \leq 2r \quad \text{for every} \quad r \in (0,1)
\end{equation}
and
\begin{equation}\label{LTQ1}
|\partial_c f(0)|\leq 1.
\end{equation}
Equality holds in \eqref{LTQ2} for some $r \in (0,1)$, or in \eqref{LTQ1}, 
 if and only if $f$ is an affine function, $f(q)=a +qb$, with $a,b \in \mathbb{H}$ and $|b|=1$.
\end{teo}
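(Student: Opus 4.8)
The plan is to mirror the classical Landau-Toeplitz argument, with Proposition~\ref{Imq2} and Proposition~\ref{Imq02} replacing the complex-variable input, and with the Schwarz Lemma for regular functions (from \cite{GSAdvances}) providing the core estimate. First I would normalize: since $\tilde{d}_2$ is translation invariant, assume $f(0)=0$. The key object is, for fixed $u,v\in\overline{\B}$, the regular function $h_{u,v}(q)=f_u(q)-f_v(q)$; by Definition of $\tilde d_2$ and Remark~\ref{increasing}, $\sup_{|q|<1}|h_{u,v}(q)|\le \tilde d_2(f(\B))=2$, so $g=\tfrac12 h_{u,v}$ maps $\B$ into $\overline{\B}$. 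If $g(0)\ne 0$ one would like to apply a Schwarz-type estimate after composing with a regular M\"obius transformation of $\B$; the cleaner route, following \cite{Poggi}, is to pick $u,v$ (and a point) realizing values nearly antipodal on a diameter of $f(\B)$ and argue that the relevant restriction $g_I$ is a holomorphic self-map of a disc, then invoke the Splitting Lemma. Concretely, to get \eqref{LTQ2}: fix $r\in(0,1)$ and $u,v\in\overline\B$, $|q_0|\le r$ achieving $\tilde d_2(f(r\B))$; set $I=I_{q_0}$ and split $g_I=F+GJ$; the classical Landau-Toeplitz/Schwarz machinery applied to the holomorphic function $F$ on the disc $\B_I$ gives $|F(q_0)|\le|q_0|$, hence $|g(q_0)|\le |q_0|\le r$, i.e.\ $\tilde d_2(f(r\B))\le 2r$.

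For the derivative bound \eqref{LTQ1}: note $\de f_u(0)=u\,a_1=u\,\de f(0)$ and $\de f_v(0)=v\,\de f(0)$, so $\de g(0)=\tfrac12(u-v)\de f(0)$. Choosing $u,v$ on the unit sphere in a common plane $L_I$ with $u-v$ of modulus as large as possible — say $u=-v$, $|u|=1$, giving $|u-v|=2$ — and applying the regular Schwarz Lemma to $g$ (which satisfies $g(0)=0$, $|g|\le 1$ on $\B$) yields $|\de g(0)|\le 1$, whence $|\de f(0)|\le 1$. The minor technical point here is that the Schwarz Lemma must be applied to the regular function $g$ itself (not to a restriction), which is legitimate since $g:\B\to\overline\B$ is regular with $g(0)=0$; I would cite the quaternionic Schwarz Lemma directly.

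The equality case is where the real work lies. Suppose equality holds in \eqref{LTQ1} (the case of equality in \eqref{LTQ2} for some $r$ reduces to it via the Maximum Modulus Principle applied to the quotient, exactly as in the classical proof). Then for the extremal choice of $u,v$ the function $g$ achieves $|\de g(0)|=1$, so by the equality statement of the regular Schwarz Lemma $g(q)=q\,c$ with $|c|=1$; unwinding, $\tfrac12\sum_{n\ge1}q^n(u^n-v^n)a_n=q\,c$ for that particular pair. This forces $a_n=0$ for $n\ge 2$ \emph{for the components that survive under $u^n-v^n$}, and to conclude $a_n=0$ outright for all $n\ge 2$ I would exploit the freedom in choosing $(u,v)$: varying $I\in\s$ and the antipodal pair in $L_I$, together with Proposition~\ref{Imq2} and especially Proposition~\ref{Imq02} (which upgrades a pointwise imaginary-component vanishing to "$g$ is a real constant"), pins down all higher coefficients. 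This is the step I expect to be the main obstacle: transferring the rigidity obtained for one restriction/one pair $(u,v)$ into a statement about the full power series of $f$, since the $*$-product structure of $f_u$ entangles the coefficients nonlinearly in $u$. Once $a_n=0$ for $n\ge2$ is established, $f(q)=a_0+qa_1$ is affine with $|a_1|=|\de f(0)|=1$; conversely, affine $f$ with $|b|=1$ gives $\tilde d_2(f(r\B))=2r$ and $|\de f(0)|=1$ by the computation recorded just before the theorem, closing the equivalence.
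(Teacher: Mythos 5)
Your skeleton for the two inequalities is close to the paper's, but there are two genuine problems. First, in your argument for \eqref{LTQ2}: the function $g=\tfrac12(f_u-f_v)$ satisfies $g(0)=\tfrac12(a_0-a_0)=0$ \emph{automatically} (every $f_u$ has the same value $a_0$ at the origin), so the worry about $g(0)\neq 0$ and the M\"obius/antipodal detour are moot -- the quaternionic Schwarz Lemma applies to $g$ directly and gives $|f_u(q)-f_v(q)|\le 2|q|$, which is all you need. Worse, the route you actually propose is invalid: after splitting $g_I=F+GJ$ one has $|g_I|^2=|F|^2+|G|^2$, so a bound $|F(q_0)|\le|q_0|$ does \emph{not} yield $|g(q_0)|\le|q_0|$. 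The paper instead works with $g_{u,v}(q)=\tfrac12 q^{-1}(f_u(q)-f_v(q))$ and the Maximum Modulus Principle, which simultaneously proves \eqref{LTQ2} and establishes that $r\mapsto\tilde d_2(f(r\B))/2r$ is increasing -- a fact it needs later. Your treatment of \eqref{LTQ1} via $u=-v$ (i.e.\ $f_{odd}$) is correct and is exactly the paper's.

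Second, the equality case, which is the substance of the theorem, is not proved, and you flag this yourself. The specific obstacle is worse than you suggest: Schwarz-Lemma rigidity only triggers when $|\partial_c g_{u,v}(0)|=\tfrac12|u-v|\,|\partial_c f(0)|=1$, i.e.\ only for antipodal unit pairs $u=-v$, and for those $g_{u,-u}(q)=\sum_{n\ \mathrm{odd}}q^nu^na_n$ sees \emph{only the odd coefficients}; varying $I$ and the antipodal pair can therefore never reach $a_2,a_4,\dots$, so "exploiting the freedom in $(u,v)$" through the Schwarz equality case alone cannot close the argument. The paper's mechanism, which is absent from your plan, is: from $f_{odd}(q)=q\partial_c f(0)$ first deduce $\tilde d_2(f(r\B))=2r$ for all $r$; then for each $w\in\B\setminus\{0\}$ introduce $h_w(q)=\tfrac12(f(q)-f(-w))\partial_c f(0)^{-1}$, check that $h_w(w)=w$ and that $|h_w|$ attains the value $|w|$ as its maximum on the disc of radius $|w|$ in $L_{I_w}$ (this is where $\tilde d_2(f(r\B))=2r$ enters), so that Proposition \ref{Imq2} yields $\IIm_{I_w}\bigl(\tfrac12\partial_c f(w)\partial_c f(0)^{-1}\bigr)=0$; letting $w$ range over $\B$ and applying Proposition \ref{Imq02} then forces $\partial_c f$ to be constant. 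You cite the right two propositions but without this family of auxiliary functions there is no bridge from them to the conclusion. (Your one-line reduction of equality in \eqref{LTQ2} to equality in \eqref{LTQ1} is the right idea, but it also needs the monotonicity of $\tilde d_2(f(r\B))/2r$ and a compactness argument as $r\to 0^+$, both of which come from the paper's $g_{u,v}$ setup that your version of \eqref{LTQ2} bypasses.)
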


\begin{proof}
To prove the first inequality, take $u,v\in \overline{\B}$, and consider the auxiliary function
\[g_{u,v}(q)= \frac{1}{2}q^{-1}(f_u(q)-f_v(q)).\]
This function is regular on $\B$. Indeed, if the power series expansion of $f$ 
in $\B$ is 
$
\sum_{n \geq 0}q^n a_n,
$
then it turns out that
\begin{equation*}\label{h}
\begin{aligned}
g_{u,v}(q)&=\frac{1}{2}q^{-1}\Big( \sum_{n\geq 0}q^n u^n a_n - \sum_{n\geq 0}q^n v^n a_n\Big)=\frac{1}{2}\sum_{n\geq 0}q^n(u^{n+1}-v^{n+1}) a_{n+1}.
\end{aligned}
\end{equation*}
From this expression of $g_{u,v}$ we can recover its value at $q=0$
\begin{equation}\label{h0}
g_{u,v}(0)= \frac{1}{2}(u-v)a_1=\frac{1}{2}(u-v) \partial_c f(0).
\end{equation}
Since $g_{u,v}$ is a regular function, using the Maximum Modulus Principle, we get that 
\[r \mapsto \max_{u,v\in \overline{\B}}\max_{|q|\le r}|g_{u,v}(q)|\]
is increasing on $(0,1)$.
Moreover,
the regularity of the function $q\mapsto f_u(q)-f_v(q)$ yields that, 
for any fixed $r\in (0, 1)$, we can write 
\begin{equation*}
\begin{aligned}
&\max_{|q|\leq r}|g_{u,v}(q)|=\max_{|q|\leq r} \frac{|f_u(q)-f_v(q)|}{2|q|}=\frac{\max_{|q|\leq
r}|f_u(q)-f_v(q)|}{2r},
\end{aligned}
\end{equation*}
which leads to
\begin{equation}\label{Dr}
    \begin{aligned}
\frac{\tilde{d}_2(f(r\B))}{2r}&= \frac{\max_{u,v \in \overline{\B}} \max_{|q|\leq
r}|f_u(q)-f_v(q)|}{2r}=\max_{u,v \in \overline{\B}}\max_{|q|\leq
r}|g_{u,v}(q)|.
\end{aligned}
\end{equation}
Therefore $\tilde{d}_2(f(r\B))/2r$ is an increasing function of $r$ and hence it is always less than or equal to 
the limit
\[ \lim_{r \to 1^{-}} \frac{\tilde{d}_2(f(r\B))}{2r}=\frac{\tilde{d}_2(f(\B))}{2}= 1.\]
This means that 
\begin{equation}\label{Drq}
\tilde{d}_2(f(r\B)) \leq 2r \quad \text{for every} \quad r \in (0,1),
\end{equation}
proving hence inequality \eqref{LTQ2} of the statement.
To prove the second inequality, Consider the odd part of $f$, 
\[f_{odd}(q)=\frac{f(q)-f(-q)}{2}.\]
Notice that $f_{odd}$ satisfies the hypotheses of the Schwarz Lemma for regular functions (see \cite{GSAdvances}). Indeed
$f_{odd}$ is a regular function on $\B$, $f_{odd}(0)=0$, and
\[|f_{odd}(q)|=\frac{|f(q)-f(-q)|}{2}\leq \frac{ \tilde{d}_2(f(\B))}{2}= 1\]
for every $q \in \B$.
Hence
\begin{equation}\label{deriv}
1\geq |\partial_c f_{odd}(0)|=\frac{|\partial_c f(q) - \partial_c (f(-q))|}{2}_{\big|_{q=0}}=\frac{|\partial_c f(q) + \partial_c f(-q)|}{2}_{\big|_{q=0}}=|\partial_c f(0)|.
\end{equation}

We will now prove the last part of the statement, covering the case of equality. To begin with, notice that if $f(q)=a+ qb$ with $a,b \in \mathbb{H}$ and $|b|=1$, then equality 
holds in both \eqref{LTQ2} and \eqref{LTQ1}.\\
Conversely, suppose that equality holds in \eqref{LTQ1}, namely that $|\partial_c f(0)|=1$.
In this case we have $|\partial_c f_{odd}(0)|=1$ and therefore, by the Schwarz Lemma (see \cite{GSAdvances}),  
\begin{equation}\label{fd}
f_{odd}(q)= q \partial_c f(0).
\end{equation}
We want to show that in this case $ \tilde{d}_2(f(r\B))=2r$ for every $r \in (0,1)$. In fact,
from \eqref{h0} and \eqref{Dr} it follows 
\[\frac{\tilde{d}_2(f(r\B))}{2r} \geq \max_{u,v \in \overline{\B}} |g_{u,v}(0)|= \max_{u,v \in \overline{\B}} \frac{1}{2}|(u-v) \partial_c f(0)|=1 \quad \text{for every} \quad r \in (0,1).\] 
Comparing the last inequality with \eqref{Drq} we get 
\begin{equation}\label{2r}
\tilde{d}_2(f(r\B))=2r \quad \text{for every} \quad r \in (0,1).
\end{equation}
We now introduce a new auxiliary function. Take $w \in \B$, with $0<|w|=r<1$ and set 
\[h_w(q)= \frac{1}{2}(f(q)-f(-w))\partial_c f(0)^{-1}.\]
The function $h_w$ is  regular on $\B$ and fixes $w$; indeed 
\[h_w(w)=\frac{1}{2}(f(w)-f(-w))\partial_c f(0)^{-1}=f_{odd}(w)\partial_c f(0)^{-1}=w \]
where the last equality is due to \eqref{fd}.
We need now to restrict our attention to what happens in $L_{I_{w}}$. 
By the Maximum Modulus Principle \ref{PMM},
 we are able to find $z_0 \in L_{I_w}$, $|z_0|=r$, such that for $z\in L_{I_{w}}$
 $$\max_{|z|\leq r}|h_w(z)|=\frac{1}{2}\max_{|z|\leq r}|f(z)-f(-w)|=\frac{1}{2}|f(z_0)-f(-w)|.$$
 Let $\hat{u} \in L_{I_w}$ with $|\hat{u}|=1$ be such that $-w=z_0\hat{u}$.
 Then, again for  $z\in L_{I_{w}}$, due to the fact that $z_0$ and $\hat{u}$ commute
 
 $$\max_{|z|\leq r}|h_w(z)|=\frac{1}{2}|f(z_0)-f(z_0\hat{u})|=\frac{1}{2}|f(z_0)-f_{\hat{u}}(z_0)|\leq \frac{1}{2}\max_{u,v \in \overline{\B}}\max_{|z|\leq r}|f_u(z)-f_v(z)|.$$
 Recalling \eqref{2r}, 
 for $z\in L_{I_{w}}$ and $q\in \mathbb{H}$ we obtain
 $$\max_{|z|\leq r}|h_w(z)|\leq \frac{1}{2} \max_{u,v \in \overline{\B}}\max_{|q|\leq r}|f_u(q)-f_v(q)|=\frac{1}{2}\tilde{d}_2(f(r\B))= r =|h_w(w)|.$$

\noindent The function $h_w$ then satisfies the hypotheses of Proposition \ref{Imq2}, and hence 
\[ 
0=\IIm_{I_w} \left(\partial_c h_w(q)_{|_{q=w}}\right)=\IIm_{I_w} \left(\frac{1}{2} \partial_c f(w)\partial_c f(0)^{-1}\right).
\]
Now recall that $w$ is an arbitrary element of $\B \setminus \{0\}$. By continuity, we get that the 
function $w\mapsto \frac{1}{2}\partial_c f(w)\partial_c f(0)^{-1}$, regular on $\B$, satisfies the hypotheses of Proposition \ref{Imq02}. 
Consequently $\frac{1}{2}\partial_c f(w)\partial_c f(0)^{-1}$ is a real constant function and hence $\partial_c f(w)$ is constant as well. 
Therefore $f$ has the required form $f(q)= f(0) + q\partial_c f(0)$.

We will show now how equality in \eqref{LTQ2} for some $s\in (0, 1)$
implies  equality in \eqref{LTQ1}. This and the preceding step will conclude the
proof. 
Suppose that there exists $s \in (0,1)$ such that  $\tilde{d}_2(f(s\B))/2s=1$. By inequality \eqref{Drq} and since $\tilde{d}_2(f(r\B))/2r$ 
is increasing in $r$, we have
\[\frac{\tilde{d}_2(f(r\B))}{2r}=1 \quad \text{for every} \quad r \in [s,1).\]
Let us prove that this equality holds for  all $r \in (0,1).$
To do this, let $\hat{u}, \hat{v} \in \overline{\B}$, be such that 
\begin{equation*}\label{ug1}
\frac{\tilde{d}_2(f(s\B))}{2s}=\max_{u,v \in \overline{\B}} \max _{|q| \leq s}|g_{u,v}(q)|=\max_{|q|\leq s}|g_{\hat{u},\hat{v}}(q)|
\end{equation*}
(where the first equality follows from equation \eqref{Dr}).
Let $r > s$. By the choice of $\hat{u}, \hat{v} \in \overline{\B}$, we get
\begin{equation*}\label{ug2}
1=\frac{\tilde{d}_2(f(r\B))}{2r}=\max_{u,v \in \overline{\B}} \max _{|q| \leq r}|g_{u,v}(q)| \geq \max_{|q|\leq r}|g_{\hat{u},\hat{v}}(q)|
\ge\max_{|q|\leq s}|g_{\hat{u},\hat{v}}(q)|=1
\end{equation*}
By the Maximum Modulus Principle the function $g_{\hat{u},\hat{v}}$ must be  constant  in $q\in \B$ and equal to $1$ in modulus. 
Consider now $r\in (0,s)$. Then
\[ 1 \geq \frac{\tilde{d}_2(f(r\B))}{2r}= \max_{u,v \in \overline{\B}} \max _{|q| \leq r}|g_{u,v}(q)| \geq \max_{|q|\leq r}|g_{\hat{u},\hat{v}}(q)|=1,\]
which implies,
$
\tilde{d}_2(f(r\B))/2r=1$ for every $r \in (0,1)
$.
The claim is now that $|\partial_c f(0)|=1$.
By \eqref{deriv}, we first of all obtain
\begin{equation}\label{limd2}
\lim_{r \to 0^+} \frac{\tilde{d}_2(f(r\B))}{2r}=1\geq |\partial_c f(0)|.
\end{equation}
Recalling that
\[\frac{\tilde{d}_2(f(r\B))}{2r}=\max_{u,v \in \overline{\B}} \max _{|q| \leq r}|g_{u,v}(q)|,\]
we can get, for every $n \in \mathbb{N}$, the existence of $u_n, v_n \in \overline{\B}$ and $q_n$,
with $|q_n|=\frac{1}{n}$ (converging up to subsequences), such that
\[1=\lim_{n \to \infty} \frac{\tilde{d}_2(f({\frac{1}{n}}\B))}{2\frac{1}{n}}=\lim_{n \to \infty}|g_{u_n,v_n}(q_n)|=|g_{\tilde{u},\tilde{v}}(0)|\leq \max_{u,v \in \overline{\B}}|g_{u,v}(0)|=|\partial_c f(0)|\]
(the last equality is due to \eqref{h0}). A comparison with  \eqref{limd2} concludes the 
proof.
\end{proof}

\section{The $n$-diameter case}
To formulate a $n$-diameter version of the Landau-Toeplitz Theorem for regular functions we begin by giving the definition of $n$-diameter of a subset of $\HH$.  
\begin{defn}\label{n-diametro}
Let $E \subset \HH$. For every $n \in \mathbb{N}$, $n \geq 2$, the {\em $n$-diameter} of $E$ is defined as
\[d_n(E)= \sup_{w_1,...,w_n \in E} \Big( \prod_{1\leq j<k \leq n} |w_k -w_j|\Big)^{\frac{2}{n(n-1)}}.\]
\end{defn}
\noindent As in the complex case (see \cite{Poggi}), we can state
\begin{pro} \label{dn<d2}
For all $n\ge 2$, we have $d_n(E)\le d_2(E)=\Diam(E)$. Moreover  $d_n(E)$ is finite if and only if $d_2(E)$ is finite.  
\end{pro}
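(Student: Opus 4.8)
The plan is to reproduce the classical argument for the transfinite (Fekete) diameter, which depends only on the metric structure of the ambient space and therefore transfers unchanged from $\C$ to $\HH$. First observe that, directly from Definition \ref{n-diametro}, $d_2(E)=\sup_{w_1,w_2\in E}|w_2-w_1|=\Diam(E)$, since the product there has the single factor $|w_2-w_1|$ and the exponent is $2/(2\cdot 1)=1$. Thus it suffices to establish the monotonicity step $d_{n+1}(E)\le d_n(E)$ for every $n\ge 2$; iterating it gives $d_n(E)\le d_{n-1}(E)\le\cdots\le d_2(E)=\Diam(E)$.

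For the monotonicity, fix $w_1,\dots,w_{n+1}\in E$ and put $P=\prod_{1\le j<k\le n+1}|w_k-w_j|$. For each index $i\in\{1,\dots,n+1\}$ let $P_i$ be the analogous product over the $n$ points obtained by deleting $w_i$. A factor $|w_k-w_j|$ of $P$ appears in $P_i$ exactly when $i\notin\{j,k\}$, i.e. for precisely $n-1$ values of $i$, so that $\prod_{i=1}^{n+1}P_i=P^{\,n-1}$. On the other hand, by Definition \ref{n-diametro} each $P_i$ is bounded by $d_n(E)^{n(n-1)/2}$. Combining the two facts, $P^{\,n-1}\le d_n(E)^{(n+1)n(n-1)/2}$, hence $P\le d_n(E)^{n(n+1)/2}$; taking the supremum over all $(n+1)$-tuples in $E$ and then the $2/(n(n+1))$-th power yields $d_{n+1}(E)\le d_n(E)$. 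The inequality is trivially true when $d_n(E)=+\infty$, so the computation is legitimate in $[0,+\infty]$.

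It remains to prove that $d_n(E)$ is finite if and only if $d_2(E)$ is. If $d_2(E)<\infty$ then $d_n(E)\le d_2(E)<\infty$ by the chain of inequalities just obtained. Conversely, assume $\Diam(E)=+\infty$; then $E$ is unbounded, hence infinite, so one may choose distinct points $p_1,\dots,p_{n-1}\in E$ and, for each $m\in\N$, a point $w^{(m)}\in E$ with $|w^{(m)}|\ge m+2\max_{1\le j\le n-1}|p_j|$ and $w^{(m)}\notin\{p_1,\dots,p_{n-1}\}$. Evaluating the product of Definition \ref{n-diametro} on the $n$-tuple $(p_1,\dots,p_{n-1},w^{(m)})$, the factors among the $p_j$'s contribute a fixed positive constant $c$, while each factor $|w^{(m)}-p_j|$ is at least $|w^{(m)}|-|p_j|\ge |w^{(m)}|/2\ge m/2$; thus the product is at least $c\,(m/2)^{n-1}\to+\infty$ as $m\to\infty$, so $d_n(E)=+\infty$.

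The one place requiring care is the exponent bookkeeping in the monotonicity step — counting in how many of the $P_i$ a given factor survives — and the routine check that degenerate sets $E$ (empty, a single point, or with fewer than $n$ elements) cause no trouble: all such $E$ are bounded, and the asserted inequalities then hold trivially with one or both sides equal to $0$.
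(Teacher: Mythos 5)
Your proof is correct and is exactly the classical Fekete--diameter argument (monotonicity $d_{n+1}\le d_n$ via deleting one point at a time and counting that each factor survives in $n-1$ of the reduced products), which is purely metric and is precisely what the paper invokes: the paper gives no proof at all, saying only ``as in the complex case (see [Poggi])''. The exponent bookkeeping, the identification $d_2(E)=\Diam(E)$, and the unboundedness argument for the finiteness equivalence all check out.
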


\noindent As we did in Section \ref{sec2.1} in the case of the classical diameter $d_2$, we will adopt a specific definition for the $n$-diameter of the image of a subset of $\HH$ under a regular 
function. We will always  consider  images of open balls of the form $r\B$. 
\begin{defn}
Let  $n\geq 2$ and let $f$ be a regular function on $\B$. For $r \in (0,1)$, we
define, in terms of the $*$-product,  
the {\em regular $n$-diameter} of the image of $r\B$ under $f$ as
\[\tilde{d}_n(f(r\B))= \max_{w_1,...,w_n \in \overline{\B}}\, \max_{|q|\leq r} \Big| \stella_{1\leq j < k \leq n}(f_{w_k}(q)-f_{w_j}(q)) 
\Big|^{\frac{2}{n(n-1)}}.\]
Moreover, we define the {\em regular $n$-diameter} of the image of $\B$ under $f$ as
\[\tilde{d}_n(f(\B))=\lim_{r \to 1^{-}}\tilde{d}_n(f(r\B)).\]
\end{defn}
\noindent The same argument used for the regular diameter in Remark \ref{increasing},
guarantees that $\tilde{d}_n(f(\B))$ is well defined.  
Notice that, because of the non-commutativity of quaternions, the order of the factors of a 
$*$-product has its importance. We can choose any order we like, but it has to
be fixed once chosen. In what follows,
when we write $1\leq j<k\leq n$ 
we always mean to order the couples $(j, k)$ with the lexicographic order.
To simplify the notation, we will sometimes  write $j<k$ meaning $1\leq j<k \leq n$.\\
The first step toward understanding the relation between the $n$-diameter and the regular $n$-diameter is the following result.
\begin{pro}\label{Dn<D2}
Let $f:\B\to\HH$ be a regular function, and let $n\geq 2$. Then $\tilde{d}_n(f(\B))\leq \tilde{d}_{2}(f(\B)).$
\end{pro}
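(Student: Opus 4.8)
The plan is to reduce the $n$-diameter estimate to the $2$-diameter one by bounding each individual $*$-factor $f_{w_k}(q)-f_{w_j}(q)$ pointwise. The key observation is that by Proposition \ref{trasf}, evaluating a $*$-product of regular functions at a point $q$ produces an ordinary product of the factors evaluated at conjugated points $T$-translated along the sphere of $q$; crucially, the conjugation map $w\mapsto f(q)^{-1}qf(q)$ preserves modulus, so the modulus of the $*$-product at $q$ is bounded by the product of the moduli of the individual factors, each evaluated somewhere on the $2$-sphere $|q|\mathbb{S}$. More precisely, if $F_{j,k}:=f_{w_k}-f_{w_j}$ and we set $h=\stella_{j<k}F_{j,k}$, then iterating Proposition \ref{trasf} gives $|h(q)|\le \prod_{j<k}|F_{j,k}(\tilde q_{j,k})|$ for suitable points $\tilde q_{j,k}$ with $|\tilde q_{j,k}|=|q|\le r$.

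First I would fix $r\in(0,1)$, fix $w_1,\dots,w_n\in\overline{\B}$ and $q$ with $|q|\le r$ realizing the maximum in the definition of $\tilde d_n(f(r\B))$. Then I would expand $\stella_{j<k}(f_{w_k}(q)-f_{w_j}(q))$ using Proposition \ref{trasf} repeatedly (peeling off one factor at a time from the left, each time replacing the remaining argument by its $T$-conjugate, which stays on $|q|\mathbb{S}$ and hence in $r\overline{\B}$). This yields
\[
\Big|\stella_{1\leq j<k\leq n}(f_{w_k}(q)-f_{w_j}(q))\Big|\le \prod_{1\leq j<k\leq n}\big|f_{w_k}(\tilde q_{j,k})-f_{w_j}(\tilde q_{j,k})\big|
\]
for points $\tilde q_{j,k}\in r\overline{\B}$. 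Next, each factor on the right is bounded by $\max_{u,v\in\overline{\B}}\max_{|q'|\le r}|f_u(q')-f_v(q')|=\tilde d_2(f(r\B))$, so the whole product is at most $\tilde d_2(f(r\B))^{\binom{n}{2}}=\tilde d_2(f(r\B))^{n(n-1)/2}$. Raising to the power $\frac{2}{n(n-1)}$ gives $\tilde d_n(f(r\B))\le \tilde d_2(f(r\B))$. Finally, letting $r\to 1^-$ and using that both sides converge (Remark \ref{increasing} and the analogous remark for $\tilde d_n$), I obtain $\tilde d_n(f(\B))\le \tilde d_2(f(\B))$.

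The main obstacle is the careful bookkeeping in the iterated application of Proposition \ref{trasf}: one must check that when a $*$-product of several factors is evaluated at $q$, each factor ends up evaluated at a point of modulus exactly $|q|$ (so that the bound by $\tilde d_2(f(r\B))$ applies), and that the degenerate case $f_{w_k}(q)-f_{w_j}(q)=0$ for some intermediate partial product — where Proposition \ref{trasf} returns $0$ — is handled (in that case the $*$-product vanishes and the inequality is trivial). Formally this is an induction on the number of $*$-factors: if $p*g$ is a $*$-product of regular functions and $p(q)\ne 0$, then $|(p*g)(q)|=|p(q)||g(T_p(q))|$ with $|T_p(q)|=|q|$, and if $p(q)=0$ then $(p*g)(q)=0$; applying this with $p$ running through the partial products $\stella_{(j,k)\le(j_0,k_0)}F_{j,k}$ delivers the displayed bound. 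Everything else is routine.
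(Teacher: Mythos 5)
Your argument is correct and is exactly the route the paper indicates: the authors omit the proof but state that the idea is to convert the $*$-product into an ordinary product by iterated application of Proposition \ref{trasf}, using that the conjugation points stay on the sphere of $q$ and hence have the same modulus, so each factor is bounded by $\tilde{d}_2(f(r\B))$. Your write-up, including the treatment of the degenerate case where a partial product vanishes and the final passage to the limit $r\to 1^-$, fills in precisely the bookkeeping the paper leaves to the reader.
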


\begin{proof} We omit the (technical) proof. The idea is to turn  the $*$-product into an usual product with an iterated application of 
Proposition \ref{trasf}.
\end{proof}
\noindent Notice that Proposition \ref{dn<d2} and Proposition \ref{Dn<D2} imply that if $d_n(f(\B))$ is finite then $\tilde{d}_n(f(\B))$ is finite as well (for any regular function $f$ and $n\geq 2$).

\noindent Let us make some simple remarks about the definition of regular $n$-diameter.
As for the case $n=2$, the regular $n$-diameter is invariant under translation: in fact if $f$ is a regular function on $\B$ and $g$ is defined as $g(q)=f(q)-f(0)$, then $\tilde{d}_n(g(r\B))=\tilde{d}_n(f(r\B))$. Moreover, if $f(q)=qb$ with $b \in \HH$, then $\tilde{d}_n(f(r\B))=|b|d_n(r\B)$; in particular, if $f$ is constant, then $\tilde{d}_n(f(r\B))=0$. Hence if $f$ is of the form $f(q)=a+qb$, for some quaternions $a$ and $b$, then the regular $n$-diameter of $f(r\B)$ coincides with its $n$-diameter.


In order to obtain analogues of inequalities \eqref{LTQ2} and \eqref{LTQ1}, 
in the $n$-diameter case, we study the ratio between the regular $n$-diameter of the image of $r\B$ under a regular function $f$ and the $n$-diameter of the domain $r\B$ of $f$. 
\begin{lem}\label{fi1}
Let $f$ be a regular function on $\B$ and let $n \in \mathbb{N}$, $n \geq 2$. Then 
\[\varphi_n(r)= \frac{\tilde{d}_n(f(r\B))}{d_n(r\B)}=\frac{\tilde{d}_n(f(r\B))}{d_n(\B)r}\]
is an increasing function of $r$ on the open interval $(0,1)$, and 
\[\lim_{r \to 0^+} \varphi_n(r)= |\partial_c f (0)|.\]
\end{lem}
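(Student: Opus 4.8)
The plan is to mimic the structure of the $n=2$ case (Theorem \ref{f0q}), building two auxiliary regular functions: one that controls the monotonicity of $\varphi_n$ via the Maximum Modulus Principle, and one whose value at $0$ recovers $\de f(0)$. First I would introduce, for fixed $w_1,\dots,w_n\in\overline{\B}$, the function
\[
G_{w_1,\dots,w_n}(q)=q^{-\binom{n}{2}}\stella_{1\le j<k\le n}(f_{w_k}(q)-f_{w_j}(q)),
\]
and check it is regular on $\B$: since each factor $f_{w_k}-f_{w_j}$ is regular and vanishes at $q=0$ (all the $*$-factors share the constant term $a_0$, which cancels), the $*$-product is divisible by $q^{\binom{n}{2}}$ in the ring of regular power series, so $G_{w_1,\dots,w_n}$ is regular. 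Because the $*$-product of regular functions satisfies $|f*g(q)|=|f(q)|\,|g(T_f(q))|$ with $|T_f(q)|=|q|$ (Proposition \ref{trasf}), one gets, for $|q|=r$ and each fixed tuple,
\[
\Bigl|\stella_{j<k}(f_{w_k}(q)-f_{w_j}(q))\Bigr|\ \le\ r^{\binom{n}{2}}\max_{|q'|\le r}|G_{w_1,\dots,w_n}(q')|,
\]
and conversely the max over $|q|\le r$ of the $*$-product modulus equals $r^{\binom{n}{2}}$ times the max of $|G_{w_1,\dots,w_n}|$ over $|q|\le r$ — here the key subtlety is that $T_f$ permutes each sphere $r\s$ onto itself, so taking maxima over the closed ball $r\overline{\B}$ is unaffected. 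Taking then the maximum over $w_1,\dots,w_n\in\overline{\B}$ and raising to the power $2/(n(n-1))$, and noting $\binom{n}{2}\cdot\frac{2}{n(n-1)}=1$, yields
\[
\varphi_n(r)=\frac{\tilde d_n(f(r\B))}{d_n(\B)\,r}=\frac{1}{d_n(\B)}\Bigl(\max_{w_1,\dots,w_n\in\overline{\B}}\ \max_{|q|\le r}|G_{w_1,\dots,w_n}(q)|\Bigr)^{\frac{2}{n(n-1)}}.
\]
Since each $|G_{w_1,\dots,w_n}|$ obeys the Maximum Modulus Principle \ref{PMM}, $r\mapsto\max_{|q|\le r}|G_{w_1,\dots,w_n}(q)|$ is increasing, hence so is its supremum over the tuples, hence so is $\varphi_n$; this handles the monotonicity claim.

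For the limit as $r\to0^+$, I would evaluate $G_{w_1,\dots,w_n}$ at $q=0$. Writing $f_{w_k}(q)-f_{w_j}(q)=q\,(w_k-w_j)\de f(0)+O(q^2)$ (the linear term of $f_u$ is $qu\,a_1=qu\,\de f(0)$), the $*$-product of the $\binom{n}{2}$ factors has lowest-order term $q^{\binom{n}{2}}\prod_{j<k}(w_k-w_j)\de f(0)$ — where the product is taken in the fixed lexicographic order and the $\de f(0)$ factors accumulate; a short computation with the $*$-product formula gives
\[
G_{w_1,\dots,w_n}(0)=\Bigl(\stella_{j<k}(w_k-w_j)\Bigr)\,\de f(0)^{\,\binom{n}{2}}\ \ \text{evaluated appropriately},
\]
so $|G_{w_1,\dots,w_n}(0)|=\bigl|\stella_{j<k}(w_k-w_j)\bigr|\,|\de f(0)|^{\binom{n}{2}}$, since modulus is multiplicative for the $*$-product. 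Then
\[
\lim_{r\to0^+}\varphi_n(r)=\frac{1}{d_n(\B)}\Bigl(\max_{w_1,\dots,w_n\in\overline{\B}}|G_{w_1,\dots,w_n}(0)|\Bigr)^{\frac{2}{n(n-1)}}=\frac{|\de f(0)|}{d_n(\B)}\Bigl(\max_{w_j\in\overline{\B}}\bigl|\stella_{j<k}(w_k-w_j)\bigr|\Bigr)^{\frac{2}{n(n-1)}}.
\]
It remains to identify $\bigl(\max_{w_j\in\overline{\B}}|\stella_{j<k}(w_k-w_j)|\bigr)^{2/(n(n-1))}$ with $d_n(\B)$: since $|\stella_{j<k}(w_k-w_j)|\le\prod_{j<k}|w_k-w_j|$ with equality attainable (e.g. taking all $w_j$ on one complex slice $L_I$, where the $*$-product reduces to the ordinary product and Definition \ref{n-diametro} is realized by $n$-th roots of unity), this maximum equals $\sup_{w_j\in\overline{\B}}\prod_{j<k}|w_k-w_j|$, i.e. $d_n(\overline{\B})^{\binom{n}{2}}=d_n(\B)^{\binom{n}{2}}$. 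Hence $\lim_{r\to0^+}\varphi_n(r)=|\de f(0)|$, as claimed.

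To present this cleanly I would first establish the identity for $\varphi_n(r)$ as a separate displayed step (the rewriting via $G_{w_1,\dots,w_n}$), then deduce monotonicity from Theorem \ref{PMM}, and finally compute the limit. The main obstacle I anticipate is the careful handling of the $*$-product under modulus and under the change of variable $T_f$: one must verify that dividing by $q^{\binom{n}{2}}$ genuinely produces a regular function (which needs that the $*$-product of functions each vanishing at $0$ vanishes to order $\binom{n}{2}$ at $0$ — true because the $*$-product is the Cauchy product of power series, so orders of vanishing add), and that passing from $\max$ over $r\s$ to $\max$ over $r\overline{\B}$ commutes with the substitution $q\mapsto T_f(q)$ inside nested $*$-factors. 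A secondary technical point is pinning down the exact form of $G_{w_1,\dots,w_n}(0)$, but only its modulus matters, and multiplicativity of $|\cdot|$ over the $*$-product (again Proposition \ref{trasf}) reduces this to the commutative computation $|\stella_{j<k}(w_k-w_j)|\cdot|\de f(0)|^{\binom{n}{2}}$.
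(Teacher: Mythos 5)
Your proposal is correct and its overall architecture coincides with the paper's: the same auxiliary function (your $G_{w_1,\dots,w_n}$ is the paper's $g_{w_1,\dots,w_n}$ up to the constant $d_n(\B)^{-n(n-1)/2}$), the same verification of regularity via the shared constant term $f_{w_j}(0)=f(0)$, and the same use of the Maximum Modulus Principle to rewrite $\varphi_n(r)^{n(n-1)/2}$ as $\max_{w}\max_{|q|\le r}|G_w(q)|$ and deduce monotonicity. Where you diverge is the limit computation: the paper unwinds the $*$-product into a pointwise product by iterating Proposition \ref{trasf}, introducing the sphere-preserving maps $T_{j,k}$ with $|T_{j,k}(q)|=|q|$, and then runs a two-sided inequality (a subsequence argument for $\le$, a ``fix the tuple first'' argument for $\ge$); you instead read $G_w(0)$ directly off the Cauchy product as the ordered product $\prod_{j<k}\big((w_k-w_j)a_1\big)$, whose modulus is $\prod_{j<k}|w_k-w_j|\,|\de f(0)|^{n(n-1)/2}$ by multiplicativity of the quaternionic norm. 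Your route is cleaner and avoids the $T_{j,k}$ bookkeeping entirely (note also that for the identity $\max_{|q|\le r}|q^{m}G_w(q)|=r^m\max_{|q|\le r}|G_w(q)|$ you do not need $T_f$ at all, since $q^m*G_w=q^mG_w$ pointwise). The one step you assert without justification is the exchange $\lim_{r\to0^+}\max_{w}\max_{|q|\le r}|G_w(q)|=\max_{w}|G_w(0)|$; this is exactly what the paper's subsequence argument is for, and in your formulation it follows from joint continuity of $(w_1,\dots,w_n,q)\mapsto G_{w_1,\dots,w_n}(q)$ on the compact set $\overline{\B}^{\,n}\times r_0\overline{\B}$ together with the trivial lower bound obtained at $q=0$ — a sentence worth adding, but not a gap in the method.
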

\begin{proof}
If $f$ is a constant or an affine function, then $\varphi_n(r)$ is a constant function. 
So let $f$ be neither constant nor affine.
Fix $w_1,...,w_n \in \overline{\B}$ and consider the auxiliary function
\[ g_{w_1,...,w_n}(q)= d_n(\B)^{-\frac{n(n-1)}{2}} q^{-\frac{n(n-1)}{2}}\stella_{1\leq j< k \leq n}(f_{w_k}(q)-f_{w_j}(q)).\]
Since $f_{w_j}(0)=f(0)$ for every $ j=1,\dots,n ,$
we get that $g_{w_1,\dots,w_n}$ is regular on $\B$. 
\noindent Moreover, using the Maximum Modulus Principle as in \eqref{Dr}, we can write
\begin{equation*}\label{fis}
\varphi_n(r)^{\frac{n(n-1)}{2}}=  \max_{w_1,...,w_n \in \overline{\B}}\max_{|q|\leq r}|g_{w_1,...,w_n}(q)|,
\end{equation*}
and hence we can conclude that $\varphi_n(r)$ is increasing in $r$.

\noindent In turn, to prove the second part of the statement, we proceed as follows. 
\begin{equation*}
\lim_{r \to 0^+} \varphi_n(r)=\lim_{r \to 0^+} \frac{\tilde{d}_n(f(r\B))}{d_n(\B)r}
= \lim_{r \to 0^+}d_n(\B)^{-1}r^{-1}\max_{w_1,...,w_n \in \overline{\B}}\, \max_{|q|\leq r}  \Big| \stella_{j 
< k } (f_{w_k}(q)-f_{w_j}(q))\Big|^{\frac{2}{n(n-1)}}.
\end{equation*}
\begin{equation*}
\begin{aligned}
&\lim_{r \to 0^+} \varphi_n(r)=\lim_{r \to 0^+} d_n(\B)^{-1}r^{-1}\!\!\!\!\!\max_{w_1,...,w_n \in
\overline{\B}}\  \max_{|q|\leq r}  {\Big | \prod_{ j < k }
 (f_{w_k}(T_{j,k}(q))-f_{w_j}(T_{j,k}(q)))\Big|}^{\frac{2}{n(n-1)}}.
\end{aligned}
\end{equation*}
where, for all $j<k$, $T_{j,k}(q)$ is a suitable quaternion belonging to the same sphere $\RRe (q)+|\IIm (q)|\s$ of $q$.
Since for  every $ j<k $ it results
$|T_{k,j}(q)|=|q|$, if $|q|=r$, using the power series expansion of $f$ we can write
\begin{equation*}
\begin{aligned}
&\lim_{r \to 0^+} \varphi_n(r)=\lim_{r \to 0^+} d_n(\B)^{-1}\max_{w_1,...,w_n \in \overline{\B}}\ \max_{|q|=r}  \prod_{j < k}\Big|\sum_{n\geq 1}(T_{k,j}(q))^{n-1} 
(w_k^n-w_j^n) a_n \Big|^{\frac{2}{n(n-1)}}.
\end{aligned}
\end{equation*}
Since $\varphi_n(r)$ is lowerbounded by $0$ and it is increasing in $r$, then the limit
of $\varphi_n(r)$, as $r$ goes to $0$, always exists.
Proceeding as in the proof of Theorem \ref{f0q}, we can find a sequence of points $\{q_m\}_{m\in \N}$, such that $|q_m|=\frac{1}{m}$ for any $m\in\N$, and a sequence of $n$-tuples $\{(w_{1,m},...,w_{n,m})\}_{m\in\N} \subset \overline{\B}^n$, 
converging to some $(\hat{w}_{1},...,\hat{w}_{n}) \in
\overline{\B}^{n}$, such that
\[\lim_{m\to \infty}\varphi_n\left(\frac{1}{m}\right)=d_n(\B)^{-1}\prod_{j < k}\Big|\sum_{n\geq 1}(T_{k,j}(0))^{n-1} (\hat{w}^n_k-\hat{w}^n_j) a_n 
\Big|^{\frac{2}{n(n-1)}}.\]
Therefore, by Definition \ref{n-diametro},  we obtain
\[\lim_{m\to \infty}\varphi_n\left(\frac{1}{m}\right)=d_n(\B)^{-1}|a_1|\prod_{j < k}|(\hat{w}_k-\hat{w}_j)| ^{\frac{2}{n(n-1)}} \leq |a_1|=|\partial_c f(0)|.\]
To prove the opposite inequality, notice that, for every choice of $\{\tilde{w}_1,...,\tilde{w}_n\}\subset
\overline{\B}$, 
\begin{equation*}\label{wtil}
\begin{aligned}
&\lim_{r \to 0^+}\max_{w_1,...,w_n \in \overline{\B}}\ \max_{|q|=r}  \prod_{j < k}\Big|\sum_{n\geq 1}(T_{k,j}(q))^{n-1} (w_k^n-w_j^n) a_n 
\Big|^{\frac{2}{n(n-1)}}\\
&\geq\lim_{r \to 0^+} \max_{|q|=r}  \prod_{j < k}\Big|\sum_{n\geq 1}(T_{k,j}(q))^{n-1} (\tilde{w}_k^n-\tilde{w}_j^n) a_n \Big|^{\frac{2}{n(n-1)}},
\end{aligned}
\end{equation*}
whence
\begin{equation*}
\begin{aligned}
&\lim_{r \to 0^+}\max_{w_1,...,w_n \in \overline{\B}}\ \max_{|q|=r}  \prod_{j < k}\Big|\sum_{n\geq 1}(T_{k,j}(q))^{n-1} (w_k^n-w_j^n) a_n 
\Big|^{\frac{2}{n(n-1)}}\\
&\geq \max_{\tilde w_1,...,\tilde w_n \in \overline{\B}}\ \lim_{r \to 0^+} \max_{|q|=r}  \prod_{j < k}\Big|\sum_{n\geq 1}(T_{k,j}(q))^{n-1} 
(\tilde w_k^n-\tilde w_j^n) a_n 
\Big|^{\frac{2}{n(n-1)}}.
\end{aligned}
\end{equation*}
Therefore we conclude
\begin{equation*}
\begin{aligned}
\lim_{r \to 0^+} \varphi_n(r)\ge
d_n(\B)^{-1}\max_{w_1,...,w_n \in \overline{\B}}\prod_{j < k}|(w_j-w_k) a_1|^{\frac{2}{n(n-1)}}=
|a_1|=|\partial_c f(0)|.
\end{aligned}
\end{equation*}
\end{proof}

\noindent By means of Lemma \ref{fi1}  it is direct to prove the following result.
\begin{teo}\label{Dn}
Let $f$ be a regular function on $\B$ such that $\tilde{d}_n(f(\B))=d_n(\B)$. Then
\begin{equation}\label{n1}
\tilde{d}_n(f(r\B))\leq d_n(r\B) \quad \text{for every} \quad r \in (0,1)
\end{equation}
and
\begin{equation}\label{n2}
|\partial_c f(0)|\leq 1.
\end{equation}
\end{teo}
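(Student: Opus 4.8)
The plan is to derive Theorem \ref{Dn} directly from Lemma \ref{fi1}, exactly mirroring the first part of the proof of Theorem \ref{f0q}. Recall from Lemma \ref{fi1} that the function
\[
\varphi_n(r)=\frac{\tilde{d}_n(f(r\B))}{d_n(r\B)}=\frac{\tilde{d}_n(f(r\B))}{d_n(\B)r}
\]
is increasing on $(0,1)$ and satisfies $\lim_{r\to 0^+}\varphi_n(r)=|\partial_c f(0)|$. The hypothesis $\tilde{d}_n(f(\B))=d_n(\B)$ gives, together with the definition $\tilde{d}_n(f(\B))=\lim_{r\to 1^-}\tilde{d}_n(f(r\B))$, that
\[
\lim_{r\to 1^-}\varphi_n(r)=\lim_{r\to 1^-}\frac{\tilde{d}_n(f(r\B))}{d_n(\B)r}=\frac{\tilde{d}_n(f(\B))}{d_n(\B)}=1.
\]

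First I would observe that, being increasing on $(0,1)$, the function $\varphi_n$ is bounded above by its limit at $1$, so $\varphi_n(r)\le 1$ for every $r\in(0,1)$. Unwinding the definition of $\varphi_n$, this is precisely
\[
\tilde{d}_n(f(r\B))\le d_n(\B)r=d_n(r\B)\quad\text{for every}\quad r\in(0,1),
\]
which is inequality \eqref{n1}. Then, since $\varphi_n$ is increasing, its infimum over $(0,1)$ is the limit as $r\to 0^+$, so
\[
|\partial_c f(0)|=\lim_{r\to 0^+}\varphi_n(r)\le\sup_{r\in(0,1)}\varphi_n(r)=\lim_{r\to 1^-}\varphi_n(r)=1,
\]
which is inequality \eqref{n2}. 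This is really just the remark that a monotone function on an interval lies between its one-sided limits at the endpoints.

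There is essentially no obstacle here: all the analytic work — the regularity of the auxiliary functions $g_{w_1,\dots,w_n}$, the application of the Maximum Modulus Principle to get monotonicity of $\varphi_n$, and the delicate limit computation identifying $\lim_{r\to 0^+}\varphi_n(r)$ with $|\partial_c f(0)|$ — has already been packaged into Lemma \ref{fi1}. The only thing to be careful about is the bookkeeping with the normalization $d_n(r\B)=d_n(\B)r$, which holds because scaling a set by $r$ scales every pairwise distance by $r$ and hence scales the $n$-diameter by $r$; and the fact that the hypothesis is stated in terms of the \emph{regular} $n$-diameter $\tilde{d}_n$ rather than $d_n$, which is exactly why the statement is phrased the way it is and why no circularity arises. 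Thus the proof is a two-line deduction from Lemma \ref{fi1}.

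\begin{proof}
By Lemma \ref{fi1}, the function $\varphi_n(r)=\tilde{d}_n(f(r\B))/d_n(r\B)=\tilde{d}_n(f(r\B))/(d_n(\B)r)$ is increasing on $(0,1)$ and $\lim_{r\to 0^+}\varphi_n(r)=|\partial_c f(0)|$. Since $\tilde{d}_n(f(\B))=\lim_{r\to 1^-}\tilde{d}_n(f(r\B))$, the hypothesis $\tilde{d}_n(f(\B))=d_n(\B)$ gives
\[
\lim_{r\to 1^-}\varphi_n(r)=\frac{\tilde{d}_n(f(\B))}{d_n(\B)}=1.
\]
As $\varphi_n$ is increasing, it is bounded above by this limit, so $\varphi_n(r)\le 1$ for all $r\in(0,1)$, that is,
\[
\tilde{d}_n(f(r\B))\le d_n(\B)r=d_n(r\B)\quad\text{for every}\quad r\in(0,1),
\]
which proves \eqref{n1}. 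Finally, again by monotonicity,
\[
|\partial_c f(0)|=\lim_{r\to 0^+}\varphi_n(r)\le\lim_{r\to 1^-}\varphi_n(r)=1,
\]
which proves \eqref{n2}.
\end{proof}
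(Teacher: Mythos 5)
Your proof is correct and follows exactly the route the paper intends: the paper states that Theorem \ref{Dn} is a direct consequence of Lemma \ref{fi1} and omits the details, which are precisely the monotonicity-plus-endpoint-limits argument you spell out. No gaps.
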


\noindent We believe that if equality holds in \eqref{n1} for some $r\in(0,1)$ or in \eqref{n2}, then $f$ is affine, but we were not able to prove this statement. On the one hand, it is easy to see that if $f$ is affine, $f(q)=a+qb$ with $a,b \in \HH$, $|b|=1$, 
then equality holds both in \eqref{n1} and in \eqref{n2}; on the other side, we do not
yet know, in general, if the converse holds using the notion of regular $n$-diameter (for $n>2$). 

\section{A $3$-diameter version of the Landau-Toeplitz Theorem} 
In this section we prove a complete $3$-diameter version of the Landau-Toeplitz Theorem. The proof relies upon the elementary fact that
 three points lie always in a same plane. For this reason  the $3$-diameter of a subset 
of $\HH$ (which has dimension  $4$)  is always attained on a bidimensional section of 
the set. 
To compute the $3$-diameter of the unit ball of $\HH$ we need to
recall a preliminary result, about what happens 
in the complex case (for a proof, see e.g. \cite{Poggi}).
Let $\D$ be the open unit disc of $\mathbb{C}$.
\begin{lem}\label{VM}
Given $n$ points $\{ w_1, ..., w_n \} \subset \overline{\D}$ 
\begin{equation*}
\prod_{1\leq j<k\leq n}|w_j -w_k| \leq n^{\frac{n}{2}}.
\end{equation*}
Moreover, equality holds if and only if (after relabeling) $w_j=u \alpha ^j$ with $u \in \mathbb{S}^1$ and $\alpha ^j$  n-th root of unity, i.e. 
$\alpha ^j= e ^{\frac{i2 \pi j }{n}}$, for every $j=1, ..., n$.
\end{lem}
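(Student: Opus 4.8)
The plan is to recognize the product $\prod_{j<k}|w_j-w_k|$ as the modulus of a Vandermonde determinant and to estimate it by Hadamard's inequality, which turns out to yield simultaneously the sharp constant $n^{n/2}$ and the characterization of the extremal configurations, with essentially no extra work. I would set $M=(w_i^{\,j-1})_{i,j=1}^{n}$, so that $\det M=\prod_{j<k}(w_k-w_j)$ and hence $|\det M|=\prod_{j<k}|w_j-w_k|$. Writing $R_i=(1,w_i,\dots,w_i^{\,n-1})$ for the $i$-th row and $\|\cdot\|$ for the Hermitian norm on $\C^n$, Hadamard's inequality gives $|\det M|\le\prod_{i=1}^{n}\|R_i\|$. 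Since $\|R_i\|^2=\sum_{j=0}^{n-1}|w_i|^{2j}$ and $|w_i|\le 1$, each summand is at most $1$, so $\|R_i\|^2\le n$ and therefore $|\det M|\le n^{n/2}$. This establishes the inequality with no computation beyond bounding $n$ terms by $1$, and in particular sidesteps any discriminant calculation.

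For the equality statement I would first verify the ``if'' direction directly from the same two inequalities. If $w_j=u\alpha^j$ with $|u|=1$ and $\alpha=e^{i2\pi/n}$, then $\langle R_i,R_k\rangle=\sum_{j=0}^{n-1}(u\alpha^i)^j\overline{(u\alpha^k)^j}=\sum_{j=0}^{n-1}\alpha^{j(i-k)}$, which vanishes whenever $i\not\equiv k$; thus the rows are pairwise orthogonal and each has norm $\sqrt n$, so Hadamard is an equality and the product equals $n^{n/2}$. Conversely, suppose the product equals $n^{n/2}$. Then both of the inequalities used above must be equalities: from $\prod_i\|R_i\|=n^{n/2}$ and $\|R_i\|\le\sqrt n$ we get $\|R_i\|=\sqrt n$ for every $i$, which forces $|w_i|=1$ (otherwise some term $|w_i|^{2j}$ with $j\ge 1$ would be strictly less than $1$); and, all rows being then nonzero, equality in Hadamard's inequality forces the $R_i$ to be pairwise orthogonal.

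The substantive step is to convert this pairwise orthogonality into the claimed geometric shape. Writing $w_i=e^{i\theta_i}$, orthogonality of $R_i$ and $R_k$ reads $\sum_{j=0}^{n-1}\zeta^j=0$ with $\zeta=w_i\overline{w_k}$, $|\zeta|=1$; since $\sum_{j=0}^{n-1}\zeta^j=(\zeta^n-1)/(\zeta-1)$, this holds precisely when $\zeta^n=1$ and $\zeta\ne 1$. Hence $\theta_i-\theta_k\in\frac{2\pi}{n}\mathbb{Z}$ for all $i\ne k$, so all the $\theta_i$ are congruent modulo $2\pi/n$ to a common value $\theta_1$. Because the product is positive the points are distinct, so their residues modulo $n$ are distinct; being $n$ distinct residues they exhaust $\mathbb{Z}/n\mathbb{Z}$, and after relabeling we obtain $w_j=u\alpha^j$ with $u=e^{i\theta_1}\in\mathbb{S}^1$, as claimed.

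I expect the inequality to be immediate once the Hadamard formulation is in place, and the forward half of the equality case to be equally short. The only part requiring genuine care is the final bookkeeping in the last paragraph: turning the pairwise conditions $w_i\overline{w_k}\in\mu_n\setminus\{1\}$ into the statement that the arguments form a single coset of $\frac{2\pi}{n}\mathbb{Z}$ and then using distinctness to conclude that they form a complete residue system. This is elementary but is the step that must be phrased cleanly, since everything geometric in the characterization is concentrated there.
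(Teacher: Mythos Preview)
Your argument is correct. The paper itself does not supply a proof of this lemma; it simply records the statement and refers the reader to \cite{Poggi} for a proof. Your approach---recognizing $\prod_{j<k}|w_j-w_k|$ as the modulus of the Vandermonde determinant and applying Hadamard's inequality to the row vectors $R_i=(1,w_i,\dots,w_i^{\,n-1})$---is the standard one and is essentially the argument given in that reference. The inequality step is immediate, and your treatment of the equality case is sound: equality in the norm bound forces $|w_i|=1$, equality in Hadamard forces pairwise orthogonality of the rows, and the geometric-series identity $\sum_{j=0}^{n-1}\zeta^j=0\iff \zeta\in\mu_n\setminus\{1\}$ then pins the arguments to a single coset of $\tfrac{2\pi}{n}\mathbb{Z}$, with distinctness giving a complete residue system. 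The only cosmetic point is that your explicit formula $u=e^{i\theta_1}$ depends on which relabeling you choose; all that matters is that $u\in\mathbb{S}^1$, which you have.
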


For the $3$-diameter of the unit ball of $\HH$ the following lemma
holds
\begin{lem}\label{radici1}
   For any $I \in \mathbb{S}$ and any $u \in \partial \B$, we have 
$d_3(\B)=\big(|\alpha_2-\alpha_1|| \alpha_3 -\alpha_1|| \alpha_3 -\alpha_2|\big)^{\frac{1}{3}},$
with $\alpha_j=u e^{\frac{I2\pi j}{3}}$, for $j=1,2,3$.
\end{lem}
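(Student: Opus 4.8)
The plan is to reduce the quaternionic statement to the complex Lemma~\ref{VM} with $n=3$ by exploiting the fact that three points always lie in one complex slice. First I would show the inequality $d_3(\B)\le \big(|\alpha_2-\alpha_1||\alpha_3-\alpha_1||\alpha_3-\alpha_2|\big)^{1/3}$ for the particular choice $\alpha_j=ue^{I2\pi j/3}$. Given any three points $w_1,w_2,w_3\in\overline\B$, fix an imaginary unit $I$ whose plane $L_I$ contains all three (possible since three points span at most a real plane through the origin, and after a suitable translation — but here we must be slightly careful, since $d_3$ involves only differences, not absolute positions). Actually the cleanest route: the three differences $w_2-w_1,\ w_3-w_1$ span a real subspace of dimension $\le 2$ inside $\rr^4$; choose $I\in\s$ so that $w_1,w_2,w_3$ all lie in the affine $2$-plane $w_1+L_I$ when the span is a complex line, and in general reduce to the case where $w_1,w_2,w_3\in\overline\B_I$ after an orthogonal transformation of $\HH$ fixing the ball (rotations of $\rr^4$ preserving $\overline\B$ act transitively enough on $2$-planes through configurations of three points). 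On that disc $\overline\B_I\cong\overline\D$, Lemma~\ref{VM} gives $|w_1-w_2||w_1-w_3||w_2-w_3|\le 3^{3/2}$, so $d_3(\B)\le 3^{1/2}$.

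Next I would check that equality $3^{1/2}$ is actually attained by the proposed configuration, independently of $I\in\s$ and $u\in\p\B$: with $\alpha_j=ue^{I2\pi j/3}$ we have $\alpha_j\in\p\B_I\subset\p\B$, and the three $\alpha_j$ are $u$ times the cube roots of unity sitting in $L_I$; a direct computation (or the equality clause of Lemma~\ref{VM} applied in $\overline\B_I$, noting $|u|=1$) gives $|\alpha_1-\alpha_2||\alpha_1-\alpha_3||\alpha_2-\alpha_3|=3^{3/2}$, hence the cube root equals $3^{1/2}$. Combining with the upper bound from the previous paragraph, $d_3(\B)=\big(|\alpha_2-\alpha_1||\alpha_3-\alpha_1||\alpha_3-\alpha_2|\big)^{1/3}=\sqrt3$ for every such choice, which is exactly the assertion.

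The main obstacle I anticipate is justifying the reduction "three points lie in a common slice $\overline\B_I$ up to a symmetry preserving $\overline\B$''. One has to argue that for any $w_1,w_2,w_3\in\overline\B$ there is an isometry $\rho$ of $\rr^4$ with $\rho(\overline\B)=\overline\B$ (so $|\cdot|$ and all pairwise distances are preserved) carrying $\{w_1,w_2,w_3\}$ into some $\overline\B_I$; this holds because the real span of $\{w_1,w_2,w_3\}$ has dimension $\le 3$, and one can rotate any $2$- or $3$-dimensional subspace, but a single complex slice $L_I=\rr+\rr I$ is only $2$-dimensional — so in fact one needs the finer observation that three points of $\HH$ always lie in a common \emph{affine} $2$-plane, and then that the relevant $2$-plane can be moved by an element of $O(4)$ onto $L_I$ (translations are not available as $\overline\B$-symmetries, but the three differences determine the geometry, and we may first translate so that, say, $w_1=0$, compute with $w_j-w_1$, then restore). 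I would handle this carefully, perhaps by reducing directly to the statement "$d_3(\B)=d_3(\overline\B_I)$'', i.e. that the $3$-diameter of the $4$-ball equals that of any of its great discs, which follows from the spanning remark plus $O(4)$-invariance of $d_3$; then Lemma~\ref{VM} and its equality case finish everything. Once this reduction is in place, the rest is the elementary complex computation with cube roots of unity.
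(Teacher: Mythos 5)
Your proposal is correct and takes essentially the same route as the paper's (one-line) proof, which likewise reduces to the observation that the $3$-diameter is attained on a maximal disc, identified without loss of generality with some $\overline{\B}_I$, and then invokes Lemma \ref{VM} together with its equality case. The one point worth stating cleanly (and which you do eventually reach, after some back-and-forth about $O(4)$ and translations) is the reduction itself: any three points of $\overline{\B}$ lie in an affine $2$-plane $P$, and $P\cap\overline{\B}$ is a Euclidean disc of radius at most $1$, hence embeds isometrically --- preserving all pairwise distances --- into $\overline{\D}$, so Lemma \ref{VM} bounds the product by $3^{3/2}$; the configuration $\alpha_j=ue^{I2\pi j/3}$ attains this because left multiplication by the unit $u$ is a distance-preserving map fixing $0$, reducing to the cube roots of unity in $\partial\B_I\cong\partial\D$.
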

\begin{proof} The result can be easily proved showing that the $3$-diameter is attained on a maximal disc that, without loss of generality, can be chosen to be some $\B_I$. 

\end{proof}
\noindent Notice that, in particular,  $d_3(\B)=d_3(\D)= \sqrt{3}$.
To prove a $3$-diameter version of the Landau-Toeplitz Theorem for regular
functions, we introduce an appropriate notion of  ``slicewise'' $3$-diameter, inspired by the power series expansion of the regular $3$-diameter. 
\begin{defn}\label{dtetto}
Let $f:\B\to\HH$ be a regular function, and let $\sum_{n\ge 0}q^na_n$ be its power series expansion. 
If $a_N$ is the first non-vanishing coefficient, let us set $\hat{f}$ to be the function obtained by multiplying $f$ (on the right) by $a_N^{-1}|a_N|$,  
\[\hat{f}(q)=\sum_{n\ge 0}q^na_na_N^{-1}|a_N|=\sum_{n\ge 0}q^nb_n,\]
regular on $\B$ as well.
For any $I\in\s$, let $w_1,w_2,w_3$ be points in the closed disc $\overline{\B}_I$, and consider the function
\[\hat g_{w_1,w_2,w_3}(z)=\sum_{n\ge 0}z^n\sum_{k=0}^n\sum_{j=0}^k\big(w_2^j-w_1^{j}\big)\big(w_3^{k-j}-w_1^{k-j}\big)\big(w_3^{n-k}-w_2^{n-k}\big)
b_jb_{k-j}b_{n-k}.\]
holomorphic in all variables $z,w_1,w_2,w_3$ on $\B_I$. 
We define the {\em slice $3$-diameter} of $f(r\B)$ by
\begin{equation}\label{d3slice}
\hat{d}_3(f(r\B))=\sup_{I\in\s}\,\max_{w_1,w_2,w_3 \in \overline{\B}_I}\, \max_{z\in r\overline{\B}} \left|\hat g_{w_1,w_2,w_3}(z) \right|^{1/3},
\end{equation}
and the {\em slice $3$-diameter} of $f(\B)$ as the limit 
\[\hat{d}_3(f(\B))=\lim_{r\to 1^-}\hat{d}_3(f(r\B)).\]
\end{defn}

Thanks to the  Maximum Modulus Principle \ref{PMM}, we get that $r \mapsto \hat{d}_3(f(r\B))$ is an increasing
function, and hence that the previous definition is well posed. 
It is not difficult to prove
that $\hat g_{w_1 ,w_2 ,w_3} (z)$ is continuous as a function of $I$ and of the real and imaginary parts of $z, w_1 , w_2 , w_3$. Hence the
supremum in $I$ appearing in equation \eqref{d3slice} is actually a maximum.
\begin{oss}\label{fdi0}
For any regular function $f:\B \to \HH$, the slice $3$-diameter $\hat{d}_3 (f (\B))$ is the same as the
slice $3$-diameter $\hat{d}_3((f - f (0))(\B))= \hat{d}_3 (f (\B) - f (0))$. Moreover it is easy to prove that if the slice $3$-diameter $\hat d_3 (f (\B))$ 
vanishes, then f is constant.
\end{oss}

In analogy with what happens in the regular $n$-diameter case, we state the following result.
\begin{lem}\label{fitetto}
Let $f$ be a regular function on $\B$, and for $r\in (0,1)$,  let $\hat{\varphi}_3(r)$ be the ratio defined as
\[\hat{\varphi}_3(r)=\frac{\hat{d}_3(f(r\B))}{d_3(r\B)}=\frac{\hat{d}_3(f(r\B))}{d_3(\B)r}.\]
Then $\hat{\varphi}_3(r)$ is increasing in $r$ and 
\[\lim_{r\to 0^+}\hat{\varphi}_3(r)=|\partial_c f(0)|.\]
\end{lem}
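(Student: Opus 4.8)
The strategy mirrors the argument of Lemma \ref{fi1}, replacing the $*$-product expression by the explicit holomorphic function $\hat g_{w_1,w_2,w_3}$ and working one slice at a time. First I would reduce to the case where $f$ is neither constant nor affine, since in those cases $\hat\varphi_3$ is constant (by Remark \ref{fdi0} and the remarks following Definition \ref{dtetto}, together with Lemma \ref{radici1}). Next, fix $I\in\s$ and $w_1,w_2,w_3\in\overline{\B}_I$. Looking at the defining series for $\hat g_{w_1,w_2,w_3}$, one sees that its coefficients of degree $0,1,2$ all vanish: indeed each term carries a factor of the form $(w_2^j-w_1^j)$, $(w_3^{k-j}-w_1^{k-j})$, $(w_3^{n-k}-w_2^{n-k})$, and for $n\le 2$ at least one of the three exponents is $0$, forcing the factor to vanish. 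Hence $z\mapsto z^{-3}\hat g_{w_1,w_2,w_3}(z)$ extends holomorphically to $\B_I$, and the function
\[
\hat G_{w_1,w_2,w_3}(q)=d_3(\B)^{-3}\,q^{-3}*\Bigl(\text{the regular extension to }\B\text{ of }\hat g_{w_1,w_2,w_3}\Bigr)
\]
is regular on $\B$ with $|\hat G_{w_1,w_2,w_3}|$ agreeing, on $r\overline{\B}_I$, with $|\hat g_{w_1,w_2,w_3}(z)|/(d_3(\B)^3|z|^3)$.

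Then, exactly as in \eqref{Dr}, the regularity of $\hat g_{w_1,w_2,w_3}$ (more precisely the fact that $|\hat g_{w_1,w_2,w_3}(z)|$, being the modulus of a holomorphic function with a zero of order $\ge 3$ at the origin, makes $|\hat g_{w_1,w_2,w_3}(z)|/|z|^3$ subharmonic and hence its max over $|z|\le r$ is attained on $|z|=r$) gives
\[
\max_{|z|\le r}|\hat G_{w_1,w_2,w_3}(z)|=\frac{\max_{|z|\le r}|\hat g_{w_1,w_2,w_3}(z)|}{d_3(\B)^3 r^3},
\]
so that, taking the supremum over $I$ and the maximum over $w_1,w_2,w_3\in\overline{\B}_I$,
\[
\hat\varphi_3(r)^3=\sup_{I\in\s}\,\max_{w_1,w_2,w_3\in\overline{\B}_I}\,\max_{|z|\le r}|\hat G_{w_1,w_2,w_3}(z)|.
\]
Since for each fixed $I,w_1,w_2,w_3$ the Maximum Modulus Principle \ref{PMM} makes $r\mapsto\max_{|z|\le r}|\hat G_{w_1,w_2,w_3}(z)|$ increasing, and a supremum of increasing functions is increasing, $\hat\varphi_3(r)$ is increasing on $(0,1)$.

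For the limit as $r\to 0^+$, the existence of $\lim_{r\to0^+}\hat\varphi_3(r)$ is guaranteed by monotonicity and nonnegativity. To identify it, I would evaluate $\hat G_{w_1,w_2,w_3}$ at $0$: from the series for $\hat g_{w_1,w_2,w_3}$ the degree-$3$ coefficient is
\[
\sum_{k=0}^{3}\sum_{j=0}^{k}(w_2^j-w_1^j)(w_3^{k-j}-w_1^{k-j})(w_3^{3-k}-w_2^{3-k})b_jb_{k-j}b_{3-k},
\]
and all terms vanish except the one with $j=1,k=2$, which equals $(w_2-w_1)(w_3-w_1)(w_3-w_2)b_1^3$; recalling $b_1=a_1a_N^{-1}|a_N|$ with $N=1$ in the non-affine, non-constant generic case (and handling $N>1$ separately, where $\partial_c f(0)=0$ and one checks the limit is $0$ directly), we get $|b_1|=|a_1|=|\partial_c f(0)|$. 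Hence, using $|q^{-3}*h(q)|_{q=0}=|h'''(0)/3!|$ up to the normalization, a routine computation combined with Lemma \ref{radici1} (which says $d_3(\B)^3=|\alpha_2-\alpha_1||\alpha_3-\alpha_1||\alpha_3-\alpha_2|$ for third roots of unity scaled by $u\in\partial\B_I$) yields
\[
\lim_{r\to0^+}\hat\varphi_3(r)^3=d_3(\B)^{-3}\,|\partial_c f(0)|^3\,\sup_{I\in\s}\max_{w_1,w_2,w_3\in\overline{\B}_I}|w_2-w_1||w_3-w_1||w_3-w_2|=|\partial_c f(0)|^3,
\]
so $\lim_{r\to0^+}\hat\varphi_3(r)=|\partial_c f(0)|$. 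The main obstacle I anticipate is the bookkeeping in the two displayed triple sums — showing cleanly that the low-degree coefficients vanish and that the degree-$3$ coefficient collapses to a single product — together with the care needed to interchange the limit $r\to0^+$ with the supremum over $I$ and the maxima over the $w_i$; the latter is handled by a compactness/subsequence argument as in the proof of Theorem \ref{f0q} and Lemma \ref{fi1}, using the continuity of $\hat g_{w_1,w_2,w_3}(z)$ in all its arguments including $I$.
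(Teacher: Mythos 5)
Your proposal is correct and follows essentially the same route as the paper, whose (sketched) proof consists precisely of establishing the identity $\hat{\varphi}_3(r)^3=d_3(\B)^{-3}\max_{I\in\s}\max_{w_1,w_2,w_3\in\overline{\B}_I}\max_{z\in r\overline{\B}_I}\left|z^{-3}\hat g_{w_1,w_2,w_3}(z)\right|$ and then applying the technique of Lemma \ref{fi1} on each slice; you merely fill in the details (vanishing of the coefficients of degree $\le 2$, Maximum Modulus monotonicity, and the compactness argument for the limit). The only slip is the aside that $N=1$ in the generic case --- in fact $N=0$ whenever $f(0)\neq 0$ --- but it is harmless, since $|a_N^{-1}|a_N||=1$ gives $|b_1|=|a_1|=|\partial_c f(0)|$ in every case.
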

\begin{proof} To establish the assertion it is possible to prove that 
\begin{equation*}
\begin{aligned}
\hat{\varphi}_3(r)^3 
=d_3(\B)^{-3} \max_{I\in\s}\max_{w_1,w_2,w_3 \in \overline{\B}_I} \max_{z\in r\overline{\B}_I} \left|z^{-3} \hat g_{w_1,w_2,w_3}(z) \right|
\end{aligned}
\end{equation*}
(see Definition \ref{dtetto}) and use the technique of the proof of Lemma \ref{fi1} on each slice.

\end{proof}

The fundamental tool to prove the ``equality case'' is the following.
\begin{teo}\label{fi.3}
Let $f$ be a regular function on $\B$ and, for $r\in(0,1)$, let 
\[
\hat{\varphi}_3(r)=\frac{\hat{d}_3(f(r\B))}{d_3(\B)r}.
\]
Then $\hat{\varphi}_3(r)$ is strictly increasing in $r$ 
except if $f$ is a constant or an affine function, i.e. if $f(q)=a +qb$ with $a,b \in \HH$.
\end{teo}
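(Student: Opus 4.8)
The plan is to show that strict monotonicity of $\hat\varphi_3$ can fail only when the auxiliary function $z\mapsto z^{-3}\hat g_{w_1,w_2,w_3}(z)$ achieves its maximum modulus at an interior point for an optimal choice of $I$ and $w_1,w_2,w_3$, and then to extract from the Maximum Modulus Principle enough rigidity to force $f$ to be affine (or constant). First I would recall from Lemma~\ref{fitetto} that
\[
\hat\varphi_3(r)^3=d_3(\B)^{-3}\max_{I\in\s}\max_{w_1,w_2,w_3\in\overline\B_I}\max_{z\in r\overline\B_I}\bigl|z^{-3}\hat g_{w_1,w_2,w_3}(z)\bigr|,
\]
and that each inner function $h_{I,w_1,w_2,w_3}(z)=z^{-3}\hat g_{w_1,w_2,w_3}(z)$ is holomorphic on $\B_I$. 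If $\hat\varphi_3$ is \emph{not} strictly increasing, there exist $0<r_1<r_2<1$ with $\hat\varphi_3(r_1)=\hat\varphi_3(r_2)$; choosing maximizers $I,w_1,w_2,w_3$ realizing $\hat\varphi_3(r_2)$, the holomorphic function $h_{I,w_1,w_2,w_3}$ attains on $r_1\overline\B_I$ a value of modulus equal to its maximum on $r_2\overline\B_I$, hence has an interior maximum of $|h|$ on $\B_I$. By the classical Maximum Modulus Principle $h_{I,w_1,w_2,w_3}$ is a constant $c$ on $\B_I$, i.e. $\hat g_{w_1,w_2,w_3}(z)=cz^3$ identically on $\B_I$.

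Next I would exploit this identity at the level of power series. Writing $\hat f(q)=\sum_{n\ge 0}q^nb_n$ with $b_N=|a_N|$ the first nonzero coefficient (so $N\ge 1$ unless $f$ is constant, a case we dispose of immediately via Remark~\ref{fdi0}), the coefficient of $z^n$ in $\hat g_{w_1,w_2,w_3}(z)$ is
\[
\sum_{k=0}^n\sum_{j=0}^k\bigl(w_2^j-w_1^{j}\bigr)\bigl(w_3^{k-j}-w_1^{k-j}\bigr)\bigl(w_3^{n-k}-w_2^{n-k}\bigr)b_jb_{k-j}b_{n-k}.
\]
The constraint $\hat g_{w_1,w_2,w_3}(z)=cz^3$ says that all these coefficients vanish for $n\ne 3$ and equal $c$ for $n=3$. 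The lowest-order nonzero term occurs at $n=3N$: its coefficient is a nonzero multiple of $(w_2^N-w_1^N)(w_3^N-w_1^N)(w_3^N-w_2^N)b_N^3$, and choosing $w_1,w_2,w_3$ to be the three $N$-th ``sub-roots'' guaranteed by Lemma~\ref{radici1} (scaled to $\partial\B_I$) this is strictly nonzero. Hence we must have $3N=3$, i.e. $N=1$: no higher-order vanishing is possible. Moreover the condition that the coefficients of $z^n$ vanish for \emph{all} $n>3$, for \emph{every} admissible triple $(w_1,w_2,w_3)$ in $\overline\B_I$, forces (by the Identity Principle in the variables $w_1,w_2,w_3$, or by matching monomials in the $w_i$) that $b_n=0$ for all $n\ge 2$; combined with $N=1$ this gives $\hat f(q)=q b_1$ up to the additive constant $b_0$, so $f(q)=f(0)+qa_1$ with $|a_1|=1$, an affine function.

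The main obstacle I expect is the combinatorial bookkeeping in the last step: disentangling, from the single scalar identity $\hat g_{w_1,w_2,w_3}(z)=cz^3$, the individual vanishing statements $b_n=0$ for $n\ge2$. The cleanest route is to fix $I$, view $\hat g_{w_1,w_2,w_3}(z)$ as a holomorphic function of the four complex variables $z,w_1,w_2,w_3$ on $\B_I^4$, and argue that since the $z^n$-coefficient (a polynomial in $w_1,w_2,w_3$ vanishing for all $w_i\in\overline\B_I$ when $n>3$) must be identically zero as a polynomial, its leading behaviour as $w_2\to w_1$, $w_3\to w_1$ pins down $b_{n}$ recursively; the factor $(w_2^j-w_1^j)(w_3^{k-j}-w_1^{k-j})(w_3^{n-k}-w_2^{n-k})$ is what isolates, after dividing by $(w_2-w_1)(w_3-w_1)(w_3-w_2)$ and specializing, the product $b_j b_{k-j}b_{n-k}$ with distinct indices, from which an induction on $n$ yields $b_n=0$ for $n\ge2$. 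I would present this induction carefully but relegate the explicit polynomial identities to a short computation, as they are routine once the structure is laid out. Finally, the converse direction — that affine or constant $f$ gives $\hat\varphi_3$ constant — is immediate from the remarks following Definition~\ref{dtetto} (translation invariance) together with the observation that for $f(q)=qb$ one has $\hat g_{w_1,w_2,w_3}(z)=z^3(w_2-w_1)(w_3-w_1)(w_3-w_2)$, so $h_{I,w_1,w_2,w_3}$ is genuinely constant in $z$.
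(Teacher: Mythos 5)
There is a genuine gap at the pivot of your argument. The Maximum Modulus Principle step gives you that $z\mapsto z^{-3}\hat g_{w_1,w_2,w_3}(z)$ is constant \emph{only for a maximizing choice} of $I$ and of the triple $(w_1,w_2,w_3)$: for a non-extremal triple you only have the inequality $\max_{|z|\le r}|z^{-3}\hat g_{w_1,w_2,w_3}(z)|\le d_3(\B)^3\hat\varphi_3(r)^3$, and equality of $\hat\varphi_3(r_1)$ with $\hat\varphi_3(r_2)$ produces no interior maximum for that function. Your coefficient-extraction step, however, explicitly invokes the vanishing of the $z^n$-coefficients of $\hat g_{w_1,w_2,w_3}$ ``for every admissible triple $(w_1,w_2,w_3)$ in $\overline\B_I$'' in order to apply a polynomial identity principle in the $w_i$; that hypothesis is not available, so the induction yielding $b_n=0$ for $n\ge 2$ does not get off the ground. (The same objection applies to your $3N=3$ step, which evaluates the identity at a triple of $N$-th roots that need not be extremal.)

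This is precisely the difficulty the paper's proof is built to circumvent. After establishing constancy for the maximizers, it identifies them, via the equality case of Lemma \ref{radici1}, as cube roots of unity in some $L_I$ (and then in every $L_J$), and it gains access to \emph{nearby} triples not by the Maximum Modulus Principle but by perturbing a single point: for fixed $z$ the map $\zeta\mapsto h^I_z(\zeta)=z^{-3}\hat g_{\zeta,w_2,w_3}(z)$ is holomorphic, bounded by $|a_1|^3d_3(\B)^3$, and attains that bound at the boundary point $\zeta=1$; Proposition \ref{Imq2} (a Julia--Wolff--Carath\'eodory type statement) then yields the single real condition $\RRe\big(\tfrac{\partial}{\partial\zeta}h^I_z(1)\big)=0$. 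Letting $z$ and $I$ vary, an induction shows only that each $b_s$ is \emph{real} --- not zero --- whence $\hat f$ preserves every slice and the complex Landau--Toeplitz theorem of \cite{Poggi} applied on a slice concludes that $f$ is affine. If you want to salvage your approach you would need to justify constancy of $z^{-3}\hat g_{w_1,w_2,w_3}$ on an open set of triples, which is exactly what fails; some substitute for Proposition \ref{Imq2} seems unavoidable.
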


\begin{proof}
Thanks to Remark \ref{fdi0} we can suppose $f(0)=0$.
Since $\hat{\varphi}_3(r)$ is increasing for 
$r\in (0,1)$, if it is not strictly increasing, then there exist $s,t$, $0<s<t<1$, such that $\hat{\varphi}_3$ is constant on $[s,t]$.
We will show that this yields that $\hat \varphi_3$ is constant on $(0,t]$.
Let $I\in \s$ and $w_1,w_2, w_3 \in \overline{\B}_I$ be such that
\[ \hat{\varphi}_3(s)^3= d_3(\B)^{-3}\max_{z\in s\overline{\B}_I}\left|z^{-3}\hat g_{w_1,w_2,w_3}(z)\right|.\]
For $r \in [s,t]$, we have $\hat{\varphi}_3(r)=\hat{\varphi}_3(s)$ and, by  
the choice of $w_1,w_2,w_3$,
\[\hat{\varphi}_3(r)^3 \geq d_3(\B)^{-3}\max_{z\in r\overline{\B}_I}\left|z^{-3}\hat g_{w_1,w_2,w_3}(z)\right|\ge d_3(\B)^{-3}\max_{z\in s\overline{\B}_I}\left|z^{-3}\hat g_{w_1,w_2,w_3}(z)\right|=\hat{\varphi}_3(s)^{3}.\] 
Hence, by the  Maximum Modulus Principle \ref{PMM}, we get that the function $z\mapsto z^{-3}\hat g_{w_1,w_2,w_3}(z)$
is constant on $\B_I$.
If we consider now $r \in (0,s)$, then, $\hat{\varphi}_3(r)\le \hat{\varphi}_3(s)$ and
\[\hat{\varphi}_3(r)^{3} 
\geq d_3(\B)^{-3}\max_{z\in r\overline{\B}_I}\left|z^{-3}\hat g_{w_1,w_2,w_3}(z)\right|=
d_3(\B)^{-3}\max_{z\in s\overline{\B}_I}\left|z^{-3}\hat g_{w_1,w_2,w_3}(z)\right|=\hat{\varphi}_3(s)^{3}.\]
Hence $\hat{\varphi}_3(r)=\hat{\varphi}_3(s)$ for all $r \in
(0,t]$. Thanks to Lemma \ref{fitetto}, we obtain then that
\begin{equation*}\label{giulia}
\hat{\varphi}_3(r) \equiv \lim_{r\to 0^+}\hat{\varphi}_3(r)=|\partial_c f(0)|=|a_1|
\end{equation*}
for $r\in [0, t]$. Recalling Remark \ref{fdi0}, we get that either $f$ is constant, or  
$a_1=\de f(0)\neq 0.$
Let us suppose that $f$ is not constant (so that  $b_n=a_na_1^{-1}|a_1|$ for any $n\in \N$).
Recalling the definition of $\hat g_{w_1,w_2,w_3}(z)$, and since the (constant) function $z\mapsto z^{-3}\hat g_{w_1,w_2,w_3}(z)$ 
is equal to its limit at $0$, we have that
\begin{equation*}
\begin{aligned}
|a_1|^3&=\hat\varphi_3^3(r)=d_3(\B)^{-3}
\Big|\sum_{n\ge 3}z^{n-3}\sum_{k=0}^n\sum_{j=0}^k\big(w_2^j-w_1^{j}\big)\big(w_3^{k-j}-w_1^{k-j}\big)\big(w_3^{n-k}-w_2^{n-k}\big)
b_jb_{k-j}b_{n-k}\Big|\\
&=d_3(\B)^{-3} \left|\left(w_2-w_1\right)\left(w_3-w_1\right)\left(w_3-w_2\right)b_1^3\right|\end{aligned}
\end{equation*}
for any $z\in \B_I$. Therefore, thanks to Lemma \ref{radici1}, 
without loss of generality we can suppose that $w_1=1,w_2,w_3$ are cube roots of unity in $L_I$.
Let now  $J$ be an imaginary unit, $J\neq I$, and consider $v_1,v_2,v_3$ cube roots of unity in $L_J$. Then, for any $r\in [0,t]$,
\begin{equation*}
\begin{aligned}
|a_1|&=\hat{\varphi}_3(r)=\!d_3(\B)^{-1} \max_{I\in\s}\max_{w_1,w_2,w_3 \in \overline{\B}_I} \max_{z\in r\overline{\B}_I} \left|z^{-3} \hat g_{w_1,w_2,w_3}(z) \right|^{1/3}\\ 
&\ge d_3(\B)^{-1} \max_{z\in r\overline{\B}_J}\left|z^{-3}\hat g_{v_1,v_2,v_3}(z)\right|^{1/3}\ge d_3(\B)^{-1}  \left|z^{-3}\hat g_{v_1,v_2,v_3}(z)\right|^{1/3}_{z=0}=|a_1|. 
\end{aligned}
\end{equation*} 
Therefore, for any $J\in\s$, if $v_1,v_2,v_3$ are cube roots of unity in $L_J$, the function $z\mapsto z^{-3}\hat g_{v_1,v_2,v_3}(z)\equiv c_J$ is constant on $\B_J$. Notice that $|c_J|$ does not depend on $J\in \s.$
Let now $I$ be an imaginary unit in $\s$, fix $z\in t\B_I$, with $|z|=r$, and let $w_1=1,w_2,w_3$ be cube roots of unity in $L_I$. Consider the function defined for $\zeta\in \B_I$ as
\[h^I_z(\zeta)=z^{-3}\hat g_{\zeta,w_2,w_3}(z)=\sum_{n\ge 3}z^{n-3}\sum_{k=0}^n\sum_{j=0}^k\big(w_2^j-\zeta^{j}\big)\big(w_3^{k-j}-\zeta^{k-j}\big)\big(w_3^{n-k}-w_2^{n-k}\big)
b_jb_{k-j}b_{n-k}.
\]
By construction $\zeta \mapsto h_z^I(\zeta)$ is holomorphic on a neighborhood of $\overline{\B}_I$, and 
\[\left|h^I_z(\zeta)\right|\le  \hat\varphi_3(r)^3d_3(\B)^3=|a_1|^3d_3(\B)^3.\]
Moreover, its value at $\zeta=1$ is
\[h^I_z(1)=z^{-3}g_{1,w_2,w_3}(z)=\left(w_2-1\right)\left(w_3-1\right)\left(w_3-w_2\right)b_1^3=-3\sqrt 3 I |a_1|^3.\] 
Then the function 
\[\zeta \mapsto h^I_z(\zeta)\left(h^I_z(1)\right)^{-1}=h^I_z(\zeta)I(3\sqrt 3)^{-1}|a_1|^{-3}\]
fixes the point $\zeta=1$ and maps the closed unit disc $\overline{\B}_I$ to itself, in fact
\[|h^I_z(\zeta)I(3\sqrt 3)^{-1}|a_1|^{-3}|=|h^I_z(\zeta)|(3\sqrt 3)^{-1}|a_1|^{-3}\le |a_1|^3d_3(\B)^3(3\sqrt 3)^{-1}|a_1|^{-3}=1.\]
We can therefore apply Lemma \ref{Imq2} and we get
\begin{equation*}
\IIm_I\Big(\left.\frac{\p}{\p \zeta}\right|_{\zeta=1} h^I_z(\zeta)I(3\sqrt 3)^{-1}|a_1|^{-3}\Big)=0, \quad \text{that is}\quad 
\RRe\Big(\frac{\p}{\p \zeta}h^I_z(1)\Big)=0.
\end{equation*}
Doing  the same construction for any $J\in\s$, we get that
\begin{equation}\label{re0}
 \RRe\Big(\frac{\p}{\p \zeta}h^J_z(1)\Big)=0
\end{equation}
for any fixed $z\in t\B_J$. An easy computation shows 
\[\frac{\p}{\p \zeta}h^I_z(1)=-\sum_{n\ge 3}z^{n-3}\sum_{k=2}^{n-1}\sum_{j=1}^{k-1}\big(j\big(w_3^{k-j}-1\big)+(k-j)\big(w_2^j-1\big)\big)\big(w_3^{n-k}-w_2^{n-k}\big)
b_jb_{k-j}b_{n-k}.\]
Thanks to the uniform convergence of the series expansion and since equation \eqref{re0} holds for any $z\in t\B_I$, 
we get that the real part of each coefficient must vanish. Namely that, for any $n\in\N$, $n\ge 3$, 
\[\RRe \Big(\sum_{k=2}^{n-1}\sum_{j=1}^{k-1}\big(j\big(w_3^{k-j}-1\big)+(k-j)\big(w_2^j-1\big)\big)\big(w_3^{n-k}-w_2^{n-k}\big)
b_jb_{k-j}b_{n-k}\Big)=0.\] 
The previous equality, together with the fact that it holds true for any $I\in\s$, 
will lead us to conclude that $b_n=a_n\left(a_1^{-1}|a_1|\right)$ is a real number 
for any $n\in\N$. 
In fact, let us proceed by induction. The first step is trivial: $b_0=0$ and
\[b_1=a_1\left(a_1^{-1}|a_1|\right)=|a_1|.\]
Suppose then that $b_1,\dots,b_{s-1}$ are real numbers. 
The first coefficient of the series expansion of $\frac{\p}{\p \zeta}h^I_z(1)$ 
that contains $b_s$ is is the one for which $n=s+2$, and has to satisfy the equation       

\begin{equation}\label{re01}
\RRe\Big(\sum_{k=2}^{s+1}\sum_{j=1}^{k-1}\big(j\big(w_3^{k-j}-1\big)+(k-j)\big(w_2^j-1\big)\big)\big(w_3^{s+2-k}-w_2^{s+2-k}\big)
b_jb_{k-j}b_{s+2-k}\Big)=0.
\end{equation}
In this sum we can gather together the terms containing $b_s$ (which correspond to $(k,j)=(s+1,s),(s+1,1),(2,1)$), that is 
\begin{equation*}
\begin{aligned}
&\left(\left(s\left(w_3-1\right)+\left(w_2^s-1\right)\right)\left(w_3-w_2\right)\right)b_s(b_1)^2+\left(\left(w_3^s-1\right)+s\left(w_2-1\right)\right)\left(w_3-w_2\right)b_s(b_1)^2\\
&+\left(\left(\left(w_3-1\right)+\left(w_2-1\right)\right)\left(w_3^s-w_2^s\right)\right)b_s(b_1)^2\\
&=\big(\sqrt3 I(3s +2 -(w_2^s+w_3^s))-3(w_2^s-w_3^s))\big)|a_1|^2b_s.
\end{aligned}
\end{equation*}
Hence we can split equation \eqref{re01} as 
\begin{equation}\label{Re02}
\begin{aligned}
&\RRe\big(\big(\sqrt3 I(3s +2 -(w_2^s+w_3^s))-3(w_2^s-w_3^s))\big)|a_1|^2b_s\big)\\
&+\RRe \Big(\sum_{j=2}^{s-1}\big(j\big(w_3^{s+1-j}-1\big)+(s+1-j)\big(w_2^j-1\big)\big)\big(w_3-w_2\big)
b_jb_{s+1-j}b_{1}\Big)\\
&+\RRe\Big( \sum_{k=3}^{s}\sum_{j=1}^{k-1}\big(j\big(w_3^{k-j}-1\big)+(k-j)\big(w_2^j-1\big)\big)\big(w_3^{s+2-k}-w_2^{s+2-k}\big)
b_jb_{k-j}b_{s+2-k}\Big)=0. 
\end{aligned}
\end{equation}
Both the second and the third term in equation \eqref{Re02}  do vanish. We will only show this assertion for the second term when $s$ is even. The proofs that the second term with $s$ odd, and that the third term, vanish  are totally similar. If $s$ is even, the second term can be rearranged as
\begin{equation*}
\begin{aligned}
&\RRe\Big(\sum_{j=2}^{s-1}\big(j\big(w_3^{s+1-j}-1\big)+(s+1-j)\big(w_2^j-1\big)\big)\left(w_3-w_2\right)
b_jb_{s+1-j}b_{1}\Big)\\
&=\RRe\Big(
\sum_{j=2}^{s/2}\big(j\big(w_3^{s+1-j}+w_2^{s+1-j}-2\big)+(s+1-j)\big(w_2^j+w_3^j-2\big)\big)\left(w_3-w_2\right)
b_jb_{s+1-j}b_{1}\Big)\\
\end{aligned}
\end{equation*}
Since $w_2^n+w_3^n\in \rr$ and $w_2^n-w_3^n\in I\rr$ for any $n\in\N$, and $b_n\in \rr$ for any $n=1,\dots,s-1$, we get that
\[\RRe\Big(
\sum_{j=2}^{s-1}\big(j\big(w_3^{s+1-j}+w_2^{s+1-j}-2\big)+(s+1-j)\big(w_2^j+w_3^j-2\big)\big)\big(w_3-w_2\big)
b_jb_{s+1-j}b_{1}\Big)=0.\]
Then equation \eqref{Re02} reduces to 
$\RRe\big(\big(\sqrt3 I(3s +2 -(w_2^s+w_3^s))-3(w_2^s-w_3^s))\big)|a_1|^2b_s\big)=0$. Therefore, for any $s \in \N$, there exists $\alpha_s\in \rr$ such that $\RRe(\alpha_sIb_s)=\alpha_s\RRe(Ib_s)=\IIm_I(b_s)=0$ for all $I\in\s$.
Hence we get $b_s\in\rr$ for all $s$.
Recalling that $b_n$ are the coefficients of the power series of $\hat{f}$, 
we get that $\hat{f}(\B_I)\subseteq L_I$ for all $I\in \s$, and hence  $\hat{f}$ is complex holomorphic on each slice.
Observe that for any $r\in(0,t]$ the slice $3$-diameter of $f(r\B)$ coincides with the usual $3$-diameter of $\hat{f}(r\B)$. In fact, for any $I\in\s$, 
\begin{equation*}
\begin{aligned}
&\hat g_{w_1,w_2,w_3}(z)=\sum_{n\ge 0}z^n\sum_{k=0}^n\sum_{j=0}^k\big(w_2^j-w_1^{j}\big)\big(w_3^{k-j}-w_1^{k-j}\big)\big(w_3^{n-k}-w_2^{n-k}\big)
b_jb_{k-j}b_{n-k}\\
&=\Big(\sum_{n\ge 0}\left((zw_2)^n-(zw_1)^n\right)b_n\Big)\Big(\sum_{n\ge 0}\left((zw_3)^n-(zw_1)^n\right)b_n\Big)\Big(\sum_{n\ge 0}\left((zw_3)^n-(zw_2)^n\right)b_n\Big)\\
&=\big(\hat{f}(zw_2)-\hat{f}(zw_1)\big)\big(\hat{f}(zw_3)-\hat{f}(zw_1)\big)\big(\hat{f}(zw_3)-\hat{f}(zw_2)\big).
\end{aligned}
\end{equation*}
Hence, for all $I\in\s$,
\begin{equation*}
\begin{aligned}
\hat{d}_3(f(r\B))&=\max_{w_1,w_2,w_3\in \overline{\B}_I}\max_{z\in r\overline{\B}_I}\big|\hat g_{w_1,w_2,w_3}(z)\big|^{1/3}\\
&=\max_{w_1,w_2,w_3\in \overline{\B}_I}\max_{z\in r\overline{\B}_I}\big|\big(\hat{f}(zw_2)-\hat{f}(zw_1)\big)\big(\hat{f}(zw_3)-\hat{f}(zw_1)\big)\big(\hat{f}(zw_3)-\hat{f}(zw_2)\big)\big|^{1/3}\\
&=d_3(\hat{f}(r\B_I)).
\end{aligned}
\end{equation*}
Thanks to the complex $n$-diameter version of the Landau-Toeplitz Theorem, \cite{Poggi}, we obtain that $\hat f$ is an affine function, $\hat f(q)=b_0 + q b_1 = a_0a_1^{-1}|a_1|+ q|a_1|$
and hence that $f$ is affine as well, $f(q)=a_0+qa_1$.  

\end{proof}
We can finally prove the 
\begin{teo}[Landau-Toeplitz Theorem for the slice $3$-diameter]
Let $f$ be a regular function on $\B$ such that $\hat{d}_3 f(\B)=d_3(\B)$. Then
\begin{equation}\label{d31}
\hat{d}_3 (f(r\B))\leq d_3(r\B) \quad \text{for every} \quad r\in (0,1)
\end{equation}
and
\begin{equation}\label{d32}
|\partial_c f(0)|\leq 1.
\end{equation}
Moreover equality holds in \eqref{d31}, fore some $r \in (0,1)$, or in \eqref{d32}, if and only if $f$ is an affine function, $f(q)=a + qb$ with 
$a,b \in \HH$, and $|b|=1$.
\end{teo}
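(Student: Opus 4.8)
The plan is to follow the same two-phase structure used in the proof of Theorem~\ref{f0q}: first derive the two inequalities \eqref{d31} and \eqref{d32} as an immediate consequence of the monotonicity results on $\hat\varphi_3$, then settle the equality case by invoking the strict monotonicity criterion of Theorem~\ref{fi.3}. For the inequalities, I would argue as follows. By Lemma~\ref{fitetto} the function $\hat\varphi_3(r)=\hat d_3(f(r\B))/(d_3(\B)r)$ is increasing on $(0,1)$, so for every $r\in(0,1)$ we have
\[
\frac{\hat d_3(f(r\B))}{d_3(\B)r}\le \lim_{\rho\to 1^-}\frac{\hat d_3(f(\rho\B))}{d_3(\B)\rho}=\frac{\hat d_3(f(\B))}{d_3(\B)}=1,
\]
using the hypothesis $\hat d_3(f(\B))=d_3(\B)$, which gives \eqref{d31} since $d_3(r\B)=d_3(\B)r$. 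For \eqref{d32}, the same Lemma~\ref{fitetto} tells us $|\partial_c f(0)|=\lim_{r\to 0^+}\hat\varphi_3(r)$, and since $\hat\varphi_3$ is increasing, this limit is bounded above by the $r\to 1^-$ limit, which equals $1$; hence $|\partial_c f(0)|\le 1$.

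For the equality case, one direction is the routine check already noted elsewhere in the paper: if $f(q)=a+qb$ with $|b|=1$, then by the remarks following Definition~\ref{dtetto} (or directly from the product factorization of $\hat g_{w_1,w_2,w_3}$ computed at the end of the proof of Theorem~\ref{fi.3}), the slice $3$-diameter of $f(r\B)$ coincides with the genuine $3$-diameter $d_3(\hat f(r\B_I))=|b|\,d_3(r\B)=d_3(r\B)$, so equality holds in \eqref{d31} for every $r$, and $|\partial_c f(0)|=|b|=1$ gives equality in \eqref{d32}. For the converse, suppose equality holds in \eqref{d31} for some $s\in(0,1)$, i.e. $\hat\varphi_3(s)=1$; by the monotonicity of $\hat\varphi_3$ and the upper bound $\hat\varphi_3\le 1$ just established, $\hat\varphi_3$ is then constant (equal to $1$) on $[s,1)$, hence in particular not strictly increasing on $(0,1)$. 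By the contrapositive of Theorem~\ref{fi.3}, $f$ must be constant or affine; since $\hat d_3(f(\B))=d_3(\B)\ne 0$ rules out the constant case, $f(q)=a+qb$, and then $\hat d_3(f(\B))=|b|d_3(\B)=d_3(\B)$ forces $|b|=1$. If instead equality holds in \eqref{d32}, i.e. $|\partial_c f(0)|=1$, then $\lim_{r\to 0^+}\hat\varphi_3(r)=1$ by Lemma~\ref{fitetto}; combined with $\hat\varphi_3(r)\le 1$ for all $r$ and monotonicity, this again forces $\hat\varphi_3\equiv 1$, so $\hat\varphi_3$ is not strictly increasing, and Theorem~\ref{fi.3} together with the normalization $\hat d_3(f(\B))=d_3(\B)$ again yields $f(q)=a+qb$ with $|b|=1$.

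The main point to handle with care is that Theorem~\ref{fi.3} is stated as ``$\hat\varphi_3$ is strictly increasing unless $f$ is constant or affine,'' so I need to make sure the equality hypotheses genuinely produce a failure of strict monotonicity on the \emph{whole} interval $(0,1)$ — this is why the step that propagates $\hat\varphi_3\equiv 1$ from $[s,1)$ (or from the germ at $0$) to all of $(0,1)$, via the combination of the global bound $\hat\varphi_3\le 1$ and monotonicity, is essential and should be spelled out. Everything else is a direct quotation of Lemma~\ref{fitetto} and Theorem~\ref{fi.3}; there is no new analytic obstacle, since the heavy lifting (the induction showing $\hat f$ maps each slice into itself, and the appeal to the complex $n$-diameter Landau-Toeplitz theorem) has already been carried out inside the proof of Theorem~\ref{fi.3}. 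Thus I expect the only real subtlety to be bookkeeping: correctly chaining the monotonicity and the normalization so that both equality hypotheses feed cleanly into the dichotomy of Theorem~\ref{fi.3}.
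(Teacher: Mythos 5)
Your proof is correct and follows essentially the same route as the paper's: both inequalities are read off from the monotonicity and limit statements of Lemma~\ref{fitetto}, and the equality case is settled by showing that equality in either \eqref{d31} or \eqref{d32} forces $\hat\varphi_3$ to fail to be strictly increasing, so that Theorem~\ref{fi.3} plus the normalization $\hat d_3(f(\B))=d_3(\B)$ yields $f(q)=a+qb$ with $|b|=1$. The care you take in propagating $\hat\varphi_3\equiv 1$ from $[s,1)$, or from the germ at $0$, to the whole interval is a sound elaboration of a step the paper leaves implicit.
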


\begin{proof}
By Lemma \ref{fitetto}, both inequalities hold true. For the equality case, if $f(q)=a + qb$ with $a,b \in \HH$, $|b|=1$, it is easy to see that 
equality holds in both statements.
Otherwise, if equality holds in \eqref{d31} or in \eqref{d32}, then $\hat{\varphi}_3(r)$ defined in Lemma \ref{fitetto} 
is not strictly increasing. Theorem \ref{fi.3} implies then that $f$ is 
an affine function. Moreover, since $\hat{d}_3 ((f(\B))=d_3(\B)$, the coefficient of the first degree term of $f$ has unitary modulus. 
\end{proof}

Notice that the notion of the slice $3$-diameter does not make sense for $n\geq 4$. Moreover the $n$-diameter of $\B$, when $n \geq 4$, is not anymore attained 
at points that lie on a same plane $L_I$. In fact, the following result holds true. 
\begin{pro} For all $I\in \mathbb{S}$  the inequality $d_4(\B)>d_4(\B_I)$ holds.
\end{pro}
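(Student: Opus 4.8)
The plan is to pin down $d_4(\B_I)$ exactly and to bound $d_4(\B)$ from below by a configuration of four points that cannot be squeezed into a disc, namely the vertices of a regular tetrahedron. The whole point is that four points placed in a three-dimensional ball can be, on average, mutually farther apart than four points confined to a two-dimensional disc, so the slicing phenomenon used for $n=3$ (Lemma \ref{radici1}) genuinely breaks down at $n=4$.

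First I would compute $d_4(\B_I)$. For any $I\in\s$ the disc $\B_I=\B\cap L_I$ is isometric to the unit disc $\D$ of $\C$, and since the $n$-diameter depends only on the metric, $d_4(\B_I)=d_4(\D)$. Applying Lemma \ref{VM} with $n=4$, for every $w_1,\dots,w_4\in\overline{\D}$ one has $\prod_{1\le j<k\le 4}|w_j-w_k|\le 4^{2}=16$, with equality for the fourth roots of unity; hence $d_4(\B_I)=d_4(\D)=16^{1/6}=\sqrt[3]{4}$.

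Next I would exhibit four points in $\overline{\B}$ beating this value. Take the vertices of a regular tetrahedron inscribed in the unit $2$-sphere of the real subspace $\mathrm{span}_{\rr}\{i,j,k\}$, for instance $v_1=(i+j+k)/\sqrt3$, $v_2=(i-j-k)/\sqrt3$, $v_3=(-i+j-k)/\sqrt3$, $v_4=(-i-j+k)/\sqrt3$, each of modulus $1$. A direct computation gives $|v_j-v_k|=\sqrt{8/3}$ for every $j\neq k$, so $\prod_{1\le j<k\le 4}|v_j-v_k|=(8/3)^{3}=512/27$. Since the $v_j$ lie in $\overline{\B}$ and since $d_4(\B)=d_4(\overline{\B})$ — because $r\,d_4(\overline{\B})=d_4(r\overline{\B})\le d_4(\B)\le d_4(\overline{\B})$ for every $r\in(0,1)$, letting $r\to 1^-$ — we get $d_4(\B)=d_4(\overline{\B})\ge (512/27)^{1/6}=\sqrt{8/3}$. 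Comparing sixth powers, $\big(\sqrt{8/3}\big)^{6}=512/27>16=\big(\sqrt[3]{4}\big)^{6}$, whence $d_4(\B)\ge\sqrt{8/3}>\sqrt[3]{4}=d_4(\B_I)$ for every $I\in\s$.

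There is no real obstacle here: the only points needing a word of care are the reduction from the open ball $\B$ to the closed ball $\overline{\B}$ (so that the boundary configuration $v_1,\dots,v_4$ is admissible), which is the scaling argument above, and the bookkeeping that all six pairwise distances of the tetrahedron coincide. The arithmetic comparison $512/27>16$, equivalently $512>432$, is what makes the strict inequality work, and this is precisely the quantitative statement that four points in a three-ball are more spread out than four points in a disc.
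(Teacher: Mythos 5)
Your proof is correct and follows essentially the same route as the paper, which simply invokes ``the direct computation of the $4$-diameter of a maximal tetrahedron contained in $\B$''; your regular tetrahedron inscribed in the unit sphere of $\mathrm{span}_{\rr}\{i,j,k\}$, with all pairwise distances $\sqrt{8/3}$ and product $512/27>16=4^{4/2}$, is exactly that computation, combined with the bound $d_4(\B_I)=d_4(\D)=16^{1/6}$ from Lemma \ref{VM}. The scaling argument reducing $\B$ to $\overline{\B}$ is a reasonable piece of care that the paper leaves implicit.
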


\begin{proof}
The proof follows from the direct computation of the $4$-diameter of a maximal tetrahedron contained in $\B$. 

\end{proof}
The proof of Theorem \ref{fi.3} heavily relies upon the fact that both the $3$-diameter of $\B$ and the slice $3$-diameter of $f(\B)$ are attained at a complete set of cube roots of unity
lying on a same plane $L_I$. We have no alternative proof to use when $n\geq 4$.


\begin{thebibliography}{88}
\bibitem{abate}M. Abate {\em Iteration theory of holomorphic mappings on Taut manifolds}, Mediterranean Press, Rende, 1989.

\bibitem{Burckel}R. B. Burckel, {\em An introduction to classical complex analysis} Birkh\"auser, Basel, 1979.

\bibitem{Poggi}R. B. Burckel, D. E. Marshall, D. Minda, P. Poggi-Corradini, T. J. Ransford, {\em Area, capacity and diameter versions of Schwarz's lemma}, 
Conform. Geom. Dyn., {\bf 12} (2008), 133-152.  
\bibitem{Pog2}R. B. Burkel, D. E. Marshall, P. Poggi-Corradini {\em On a theorem of Landau and Toeplitz} arXiv:math/0603579v1 [math.CV], (2006).
\bibitem{ext}F. Colombo, G. Gentili, I. Sabadini, D. Struppa,  {\em Extension results for slice regular functions of a quaternionic variable}, Adv. Math., 
{\bf 222} (2009), 1793-1808. 
\bibitem{Cullen}C. G. Cullen, {\em An integral theorem for analytic intrinsic functions on quaternions}, Duke Math. J., {\bf 32} (1965), 139-148.
\bibitem{BohrDGS}C. Della Rocchetta, G. Gentili, G. Sarfatti, {\em The Bohr theorem for slice regular functions} Math. Nachr. {\bf 285} (2012), 2093-2105.
\bibitem{BlochDGS}C. Della Rocchetta, G. Gentili, G. Sarfatti, {\em A Bloch-Landau theorem for slice regular functions}, in Advances in Hypercomplex Analysis, ed. by G. Gentili, I. Sabadini, M. V. Shapiro, F. Sommen, D. C. Struppa, Springer INdAM Series, Springer, Milan, 2013, pp. 55-74.

\bibitem{gardner} R. J. Gardner, {\em Geometric tomography}, (Second edition), Encyclopedia of Mathematics and its Applications, 58. Cambridge University Press, 
Cambridge, 2006.  
\bibitem{libroGSS}G. Gentili, C. Stoppato, D. C. Struppa, {\em Regular functions of a quaternionic variable}, Springer Monographs in Mathematics, Springer, Berlin-Heidelberg, 2013.

\bibitem{G.S.}G. Gentili, D. C. Struppa, {\em A new approach to Cullen-regular functions of a quaternionic variable}, C. R., Acad. Sci. Paris, Ser. I, {\bf 342} 
(2006), 741-744. 
\bibitem{GSAdvances}G. Gentili, D. C. Struppa {\em A new theory of regular function of a quaternionic variable}, Adv. Math., {\bf 216} (2007), 279-301. 
\bibitem{L-R}T. Lachand-Robert, \'E. Oudet {\em Bodies of constant width in arbitrary dimension}, Math. Nachr. {\bf 280} (2007), 740-750.

\bibitem{L-T}E. Landau, O. Toeplitz, {\em\ \"Uber die gr\"o\ss te Schwankung einer analytischen Funktion in einem Kreise}, Arch. der Math. und Physik, {\bf 11} (1907), 302-307. 
\bibitem{sarfatti} G. Sarfatti, \emph{Elements of function theory in the unit ball of quaternions}, Ph.D. Thesis, Universit\`a di Firenze, http://www.students.math.unifi.it/users/sarfatti/PhDGiulia.pdf, 2013.

\bibitem{Solynin} A. Yu. Solynin, {\em A Schwarz lemma for meromorphic functions and estimates for the hyperbolic metric}, Proc. Amer. Math. Soc., {\bf 136} (2008),  
no. 9, 3133-3143.
\end{thebibliography}
\end{document}